\title{Multiplicative structures and random walks in o-minimal groups}
\author{Hunter Spink\thanks{Department of Mathematics, Stanford University, Stanford, CA 94305.
Email: \href{mailto:hspink@stanford.edu} {\nolinkurl{hspink@stanford.edu}}.}}
\date{}
\newtheorem{thm}{Theorem}[section]
\crefname{thm}{Theorem}{Theorems}
\newtheorem{clm}[thm]{Claim}
\newtheorem{prop}[thm]{Proposition}
\newtheorem{cor}[thm]{Corollary}
\newtheorem{qu}[thm]{Question}
\newtheorem{conjecture}[thm]{Conjecture}
\crefname{conjecture}{Conjecture}{Conjectures}
\newtheorem{lem}[thm]{Lemma}
\crefname{lem}{Lemma}{Lemmas}
\newtheorem{fact}[thm]{Fact}
\newtheorem{thmx}{Theorem}
\theoremstyle{definition}
\newtheorem{defn}[thm]{Definition}
\newtheorem{exmp}[thm]{Example}
\newtheorem{notat}[thm]{Notation}
\newtheorem{rem}[thm]{Remark}
\begin{document}

\global\long\def\RR{\mathbb{R}}%
\global\long\def\QQ{\mathbb{Q}}%
\global\long\def\E{\mathbb{E}}%
\global\long\def\Var{\operatorname{Var}}%
\global\long\def\CC{\mathbb{C}}%
\global\long\def\NN{\mathbb{N}}%
\global\long\def\ZZ{\mathbb{Z}}%
\global\long\def\Bad{\operatorname{Bad}}%
\global\long\def\Ber{\operatorname{Bernoulli}}%
\global\long\def\Inf{\operatorname{Inf}}%
\global\long\def\vol{\operatorname{vol}}%
\global\long\def\conv{\operatorname{conv}}%
\global\long\def\floor#1{\left\lfloor #1\right\rfloor }%
\global\long\def\ceil#1{\left\lceil #1\right\rceil }%

\global\long\def\dimrob{\operatorname{dim}_{\mathrm{rob}}}%
\global\long\def\dimself{\operatorname{dim}_{\mathrm{self}}}%

\let\originalleft\left
\let\originalright\right
\renewcommand{\left}{\mathopen{}\mathclose\bgroup\originalleft}
\renewcommand{\right}{\aftergroup\egroup\originalright}

\let\OLDthebibliography\thebibliography
\renewcommand\thebibliography[1]{
  \OLDthebibliography{#1}
  \setlength{\parskip}{0pt}
  \setlength{\itemsep}{3pt plus 0.3ex}
}

\global\long\def\mk#1{\textcolor{red}{\textbf{[MK comments:} #1\textbf{]}}}

\global\long\def\hs#1{\textcolor{red}{\textbf{[HS comments:} #1\textbf{]}}}

\newcommand{\hunter}[1]{\textcolor{red}{#1}}

\maketitle

\begin{abstract}
We prove structure theorems for o-minimal definable subsets $S\subset G$ of definable groups containing large multiplicative structures, and show definable groups do not have bounded torsion arbitrarily close to the identity. As an application, for certain models of $n$-step random walks $X$ in $G$ we show upper bounds $\mathbb{P}(X\in S)\le n^{-C}$ and a structure theorem for the steps of $X$ when $\mathbb{P}(X\in S)\ge n^{-C'}$.

\end{abstract}
\section{Introduction}
\label{sec:intro}
For a sufficiently nice subset $S$ of a topological group $G$, such as a submanifold of a Lie group, we investigate settings where the ``dimension'' of $S$ obstructs which finite multiplicative structures can appear in $S$. In these settings, we are able to study questions of finite additive combinatorics for such potentially infinite subsets $S\subset G$, where $G$ is considered as an abstract group.

If the subsets $S$ under consideration are general enough to interpolate through arbitrary countable subsets of $G$, then nothing can be said. A natural context which restricts arbitrary interpolation is definability in an o-minimal structure $\mathbb{M}_{\mathcal{F}}:=(M,<,\mathcal{F})$ extending a dense linear order $(M,<)$. In recent years, there have been a number of striking applications of o-minimality, particularly in the study of Diophantine properties of special points on varieties (see for example \cite{P11,PU08}), and counting rational and algebraic points of bounded height on transcendental sets (see for example \cite{PW06,P09}). We collect definitions concerning o-minimality in \Cref{sec:omindefs} and facts about o-minimality in \Cref{sec:dim}, and  refer the interested reader to the introductory texts of van den Dries \cite{vdD98} and Coste \cite{Cos00}.

Informally, one of our main results is that for $G$ a Lie group and $S\subset G$ an analytic submanifold, both definable in the restricted-analytic exponential structure $\mathbb{R}_{an,\exp}$ (see \Cref{exmp:Ranexp}), then if $S$ contains finite multiplicative substructures of ``dimension'' (appropriately construed, see \Cref{defn:dimrob}) exceeding the Hausdorff dimension $\dim(S)$, then  $S$ must contain an ``exponential arc'' of $G$ (\Cref{defn:exparc}). Hence such definable subsets $S\subset G$ avoiding ``exponential arcs'' have interesting finite additive combinatorics.

As a concrete corollary, if $S$ contains unboundedly long arithmetic progressions $\{h,gh,\ldots,g^{m-1}h\}$, then we can find unboundedly long arithmetic progressions with arbitrarily small step size starting at a fixed $s\in S$ -- we will show this holds in any o-minimal structure, even when $G$ is not a Lie group. We note that $\mathbb{R}_{an,\exp}$ excludes certain types of oscillatory behaviour such as $S=\{(x,y)\in \mathbb{R}^2:y=\sin(x)\}$, which contains the arithmetic progression $2\pi\mathbb{Z}\times \{0\}$ but not an entire interval (an exponential arc in $G=\mathbb{R}^2$).

Formalizing this, we will define two notions of combinatorial dimension $\dimrob(S)$ and $\dimself(S)$ for subsets $S\subset G$, based on the multiplicative substructures contained in $S$. We show that for $S$ definable in an o-minimal structure not containing an exponential arc in the $\mathbb{R}_{an,exp}$ setting, or unboundedly large arithmetic progressions in the more general $\mathbb{M}_{\mathcal{F}}$ setting, these  combinatorial dimensions are finite, and bounded above by the o-minimal dimension $\dim(S)$ (the Hausdorff dimension when $G$ is a Lie group).

The main application of our results (see \Cref{sec:probapp}) will be to show that questions concerning the maximum concentration probability $\max_{g\in G}\mathbb{P}(X=g)$ for a random walk $X$ obtained as a product of independent random elements of $G$ (possibly taken from different distributions), as considered in Littlewood-Offord theory, behave remarkably similar to analogous questions for $\mathbb{P}(X\in S)$. This builds on previous work of Fox, Kwan, and the present author \cite{fox2021geometric} for subsets $S$ of the topological group $(\mathbb{R}^d,+)$, which are definable in an o-minimal structure extending $\mathbb{R}_{alg}:=(\mathbb{R},<,+,\times)$.

We start by defining $\dimrob(S)$.

\begin{defn}\label{defn:dimrob}
Define the \emph{$r$-uniform multiplication hypergraph of $S$} to be
$$\mathcal{H}_{S,r}:=\{(g_1,\ldots,g_r)\in G^r: \prod_{i=1}^r g_i\in S\}\subset G^r,$$
and say that $S$ \emph{robustly contains $r$-dimensional multiplicative structure} if $\mathcal{H}_{S,r}$ contains grids $A_1\times \cdots \times A_r$ with $|A_i|=C$ for all $C$. Define the \emph{robust multiplicative dimension} of $S$ as
$$\dimrob(S):=\sup\{r:S\text{ robustly contains $r$-dimensional multiplicative structure}\}\in \mathbb{N}\cup \{\infty\}.$$
\end{defn}
Note that in the special case that $A_1,\ldots,A_r$ are arithmetic progressions of the form $\{h,gh,\ldots,g^{C-1}h\}$ for various $g,h\in G$, then the pointwise product set $A_1\cdots A_r$ is a ``generalized rank $r$ arithmetic progression'', the discrete analogue of a parallelepiped or zonotope.

Our first theorems center around the following question.
\begin{qu}\label{qu:rob}
Can we geometrically characterize subsets $S\subset G$ which have $\dimrob(S)=r$?
\end{qu}

We recall as a starting point a very general result of Chernikov, Galvin, and Starchenko on definable r-partite hypergraphs in an o-minimal structure containing arbitrarily large grids.
\begin{thm}[Chernikov, Galvin, Starchenko {\cite[Theorem 5.12]{CGS20}}]
If $\mathcal{H}\subset V_1\times \ldots \times V_r$ is a definable subset of a product of definable sets in an o-minimal structure $\mathbb{M}_{\mathcal{F}}$, and $\mathcal{H}$ contains grids $A_1\times \cdots \times A_r$ with $|A_i|=C$ for all $C$, then there are definable curves $\mathcal{C}_1,\ldots,\mathcal{C}_r$ with $\mathcal{C}_i\subset V_i$ such that $\mathcal{H}$ contains $\mathcal{C}_1\times \cdots \times \mathcal{C}_r$.
\end{thm}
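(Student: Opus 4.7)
My plan is to induct on $r$. The base case $r=1$ is immediate from o-minimal cell decomposition: a definable subset $\mathcal{H}\subset V_1$ that contains finite subsets of every cardinality is infinite, hence must contain a one-cell, which is a definable curve.

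For the inductive step, consider the definable family of fibers $\mathcal{H}_{(a_1,\ldots,a_{r-1})}:=\{v\in V_r:(a_1,\ldots,a_{r-1},v)\in\mathcal{H}\}$. By o-minimal uniform finiteness there is a constant $N$ with $|\mathcal{H}_a|\le N$ whenever $\mathcal{H}_a$ is finite, so any grid $A_1\times\cdots\times A_r\subset\mathcal{H}$ of size $C>N$ forces $\mathcal{H}_a$ to be infinite for every $a\in A_1\times\cdots\times A_{r-1}$ (as $A_r\subset\mathcal{H}_a$ has size $C>N$). Hence the definable set $B:=\{a\in V_1\times\cdots\times V_{r-1}:\mathcal{H}_a \text{ is infinite}\}$ contains $(r-1)$-grids of all sizes, and the inductive hypothesis yields curves $\mathcal{C}_1,\ldots,\mathcal{C}_{r-1}$ with $\mathcal{C}_1\times\cdots\times\mathcal{C}_{r-1}\subset B$.

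It then remains to produce $\mathcal{C}_r\subset V_r$ lying in the fiber $\mathcal{H}_a$ uniformly for $a$ in some subproduct of $\mathcal{C}_1\times\cdots\times\mathcal{C}_{r-1}$. I would parametrize each $\mathcal{C}_i$ by an open interval $I_i\subset M$ and take a cell decomposition of the ambient affine space compatible with $\mathcal{H}$ and with $\mathcal{C}_1\times\cdots\times\mathcal{C}_{r-1}\times V_r$. Since every fiber is infinite, some cell $D\subset\mathcal{H}$ has one-dimensional fibers over a full-dimensional portion of the box $I_1\times\cdots\times I_{r-1}$. These fibers form a definable family of one-dimensional cells in $V_r$; the grid hypothesis, combined with uniform finiteness applied to intersections of distinct curves in this family (any two such intersect in either a bounded-size set or coincide), forces the fiber-curve to be locally constant in $a$ on a definable sub-locus still containing $(r-1)$-grids of every size. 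Applying the inductive conclusion to this sub-locus, then using continuity of the cell-defining functions to shrink to a small sub-box over which a fixed open interval sits strictly between the bounding functions, extracts the desired common $\mathcal{C}_r$.

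The main obstacle I expect is precisely this last uniformization step: the one-dimensional pieces living inside individual fibers of $\mathcal{H}$ may, a priori, be genuinely different curves in $V_r$, and one needs both the large-grid hypothesis (to rule out generic non-coincidence of the fiber curves) together with o-minimal continuity and uniform finiteness (to convert pointwise coincidence on a large locus into a common interval on a small sub-box) to force them to share a single subcurve. Everything else --- the fiber-finiteness dichotomy, definability of the ``infinite fiber'' locus, parametrizing one-cells by intervals, and passing to subcurves --- is routine bookkeeping with standard o-minimal facts.
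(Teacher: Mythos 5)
The paper only cites this result (to Chernikov–Galvin–Starchenko \cite[Theorem 5.12]{CGS20}) and does not include a proof, so there is no internal argument to compare against. Evaluating your sketch on its own merits: the overall architecture is sound, and several of the key moves are correct. The base case is right (an infinite definable set has dimension at least one, hence contains a one-cell). The reduction in the inductive step is also correct: uniform finiteness bounds finite fibers, so a $C$-grid with $C$ large forces the fiber over each grid parameter to be infinite, and the infinite-fiber locus $B$ is definable and inherits $(r-1)$-grids of all sizes, whence by induction it contains a product of curves.

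The problematic part is the middle of the last paragraph. The claim that ``any two [fiber curves] intersect in either a bounded-size set or coincide,'' and that one can therefore force the fiber curve to be \emph{locally constant}, is both too strong and unnecessary. In an o-minimal structure two definable curves can share a subarc without coinciding, so the dichotomy you invoke is false; and you do not actually need the fiber curves to coincide — you only need them to share a fixed interval. Fortunately, your closing sentence already states the correct mechanism: over a product of curves on which all fibers are infinite, a cell decomposition produces strip-type cells with definable, piecewise-continuous bounding functions $f<g$; at an interior point $c_0$ of a cell one has $f(c_0)<g(c_0)$, and continuity lets you shrink the base box so that a fixed subinterval $J$ lies strictly between $f$ and $g$ throughout. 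This argument uses only the infiniteness of fibers (already secured when constructing $B$) and needs no further appeal to the grid hypothesis or to uniform finiteness of curve intersections.

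One more detail to be careful about: when $V_r$ is embedded in $M^{d_r}$ with $d_r>1$, the common interval $J$ you extract from the strip cell lives in one coordinate of a fiberwise parametrization; you should make sure the whole cell is parametrized compatibly (a cell over the base is a graph/strip in each successive coordinate), so that a single definable section lifts $J$ to a genuine curve $\mathcal{C}_r\subset V_r$ satisfying $\mathcal{C}_1\times\cdots\times\mathcal{C}_{r-1}\times\mathcal{C}_r\subset\mathcal{H}$. This is routine given cell decomposition but is part of the ``bookkeeping'' that deserves to be spelled out. With the ``locally constant'' detour removed and the lifting step made explicit, your sketch becomes a correct proof.
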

Applying the theorem to the hypergraph $\mathcal{H}_{S,r}$ yields the following.
\begin{cor}\label{cor:Chernikov}
Let $S\subset G$ be a definable subset of a definable group $G$ in an o-minimal structure $\mathbb{M}_{\mathcal{F}}$. If $\dimrob(S)<\infty$, then $\dimrob(S)$ is the largest $r$ such that there are definable curves $\mathcal{C}_1,\ldots,\mathcal{C}_r\subset G$ such that $S$ contains the pointwise product $\mathcal{C}_1\cdots \mathcal{C}_r$. If $\dimrob(S)=\infty$ then $S$ contains such products for all $r$.
\end{cor}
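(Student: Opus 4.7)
The plan is to derive this almost immediately from the Chernikov--Galvin--Starchenko theorem applied to $\mathcal{H}_{S,r}$, with a small additional argument using that one-dimensional definable sets are infinite. Since $G$ is a definable group (so multiplication is definable) and $S$ is definable, the hypergraph $\mathcal{H}_{S,r} \subset G^r$ is definable in $\mathbb{M}_{\mathcal{F}}$, so the hypothesis of the cited theorem is met.

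First I would prove the forward direction: if $\dimrob(S) \geq r$, then $\mathcal{H}_{S,r}$ contains grids $A_1 \times \cdots \times A_r$ of all finite sizes, so by the Chernikov--Galvin--Starchenko theorem there exist definable curves $\mathcal{C}_1, \ldots, \mathcal{C}_r \subset G$ with $\mathcal{C}_1 \times \cdots \times \mathcal{C}_r \subset \mathcal{H}_{S,r}$. Unfolding the definition of $\mathcal{H}_{S,r}$, this says precisely that $\mathcal{C}_1 \cdots \mathcal{C}_r \subset S$.

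For the reverse direction, suppose definable curves $\mathcal{C}_1, \ldots, \mathcal{C}_r \subset G$ exist with $\mathcal{C}_1 \cdots \mathcal{C}_r \subset S$. Here I would use the o-minimal fact that any definable set of dimension $\geq 1$ is infinite (a definable set has dimension $0$ iff it is finite), so each $\mathcal{C}_i$ contains subsets $A_i \subset \mathcal{C}_i$ of arbitrary finite cardinality $C$. For any such choice, $A_1 \times \cdots \times A_r \subset \mathcal{C}_1 \times \cdots \times \mathcal{C}_r \subset \mathcal{H}_{S,r}$, so $\dimrob(S) \geq r$. Combining the two directions gives equality of the two quantities when $\dimrob(S)$ is finite, and the same forward direction applied for each $r \in \mathbb{N}$ handles the case $\dimrob(S) = \infty$.

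There is no real obstacle here: the only point requiring minor care is confirming that the curves produced by Chernikov--Galvin--Starchenko are genuinely infinite (so that the reverse direction goes through and upper bounds $\dimrob$ rather than merely lower bounds it), which follows from the standard o-minimal correspondence between dimension and finiteness.
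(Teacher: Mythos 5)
Your proof is correct and takes essentially the same route as the paper, which simply notes that the corollary follows by applying the Chernikov--Galvin--Starchenko theorem to $\mathcal{H}_{S,r}$; you have just unpacked both directions of the equivalence explicitly. The care about curves being infinite is sound but unproblematic, since a definable curve has dimension $\geq 1$ and is therefore infinite.
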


Refining this, our first two results give a simpler characterization of those $S$ such that $\dimrob(S)=\infty$, and show that when $\dimrob(S)<\infty$, we may assume the multiplication map $\mathcal{C}_1\times \cdots \times \mathcal{C}_r\to \mathcal{C}_1\cdots \mathcal{C}_r$ is injective. This latter result in particular implies $\dimrob(S)\le \dim(S)$ when $\dimrob(S)<\infty$, so in this case the $o$-minimal dimension controls the finite multiplicative structures that can appear in $S$.

Our first theorem considers the special case that $S\subset G$ is a definable subset of a definable group in the restricted analytic-exponential o-minimal structure $\mathbb{R}_{an,\exp}$ of van den Dries and Miller \cite{vdD98} (which in particular implies $G$ canonically has the structure of a Lie group and $S$ is a finite union of submanifolds).

\begin{exmp}\label{exmp:Ranexp}
We will describe later precisely what it means for $S\subset G$ to be definable in $\mathbb{R}_{an,\exp}$, but note that this includes as special cases when
\begin{itemize}
    \item $G$ is any semi-algebraic group, such as $GL_d(\mathbb{R})$, $GL_d(\mathbb{C})$, the unitary group $U_d(\mathbb{C})$, $(\mathbb{R}^d,+)$ or an abelian variety, and
    \item $S\subset G$ is any semi-algebraic subset, or more generally any Boolean combination of subsets of $G$ defined by equalities and inequalities of functions which are compositions of real constants, $+,\times,\exp,\log$, and ``restricted analytic functions'' $f|_{[0,1]^\ell}$ where $f:N\to \mathbb{R}$ is an analytic function on an open neighborhood $N$ of $[0,1]^\ell\subset \mathbb{R}^\ell$ for various $\ell$.
\end{itemize} 
\end{exmp}

In this setting, even without arbitary interpolation there is a natural supply of curves $S$ with $\dimrob(S)=\infty$ which arise from the Lie group exponential map.
\begin{defn}\label{defn:exparc}
For a Lie group $G$ with lie algebra $\mathfrak{g}$, define an \emph{exponential arc} to be a subset of the form
$$\{\exp_{G}(tM)h:t\in [0,c]\}\subset G,$$
where $\exp_{G}$ is the Lie group exponentiation map from a neighborhood $0\in B\subset \mathfrak{g}$ to $G$, $0\ne M \in c^{-1}B$, $h\in G$ and $0<c\in \mathbb{R}$.
\end{defn}
For any such arc, if we take values of $t$ lying in a sum $A_1'+\cdots+A_r'$ with $A_i'\subset [0,\frac{c}{r}]$, then this corresponds to a grid $A_1\times \cdots \times A_r\subset \mathcal{H}_{S,r}$ with $|A_i|=|A_i'|$ for all $i$. Hence an exponential arc has $\dimrob(S)=\infty$.

Before stating our first theorem, we recall that a \emph{definable family} of subsets of $T$ is the collection of fibers over a definable set $\mathcal{F}$ (the indexing set) of a definable subset $\widetilde{S}\subset T\times \mathcal{F}$, considered as subsets of $T$. For example the family of ellipses $\{(x,y):ax^2+by^2=1\}_{a,b\in \mathbb{R}_{>0}}$ is a definable family of subsets of $\mathbb{R}^2$ in the structure $\mathbb{R}_{an,\exp}$, corresponding to $\{(x,y,a,b):ax^2+by^2=1\}\subset \mathbb{R}^2\times \mathbb{R}_{>0}^2$.

\begin{thmx}
\label{thm:trajectory}
Suppose that $S\subset G$ is a definable subset of a definable group in  $\mathbb{R}_{an,\exp}$. Then
\begin{enumerate}[a)]
    \item  $\dimrob(S)=\infty$ exactly when $S$ contains an exponential arc. Furthermore, in a definable family of subsets of $G$, there exists an $N$ such that $S$ contains an exponential arc if and only if $S$ contains an arithmetic progression of length $\ge N$.
    \item If $S$ does not contain an exponential arc, then $\dimrob(S)= r$ is the largest number so that there is an analytic injection $\Gamma(t_1,\ldots,t_r):[0,1]^r\to S$ of the form $\Gamma(t_1,\ldots,t_r)=\gamma_1(t_1)\cdots \gamma_r(t_r)$ where each $\gamma_i(t):[0,1]\to G$ is an analytic injection. In particular, $\dimrob(S)\le \dim(S)$.
\end{enumerate}
\end{thmx}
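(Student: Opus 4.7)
The plan is to address the two parts in sequence. For part (a), the forward direction follows from the one-parameter subgroup identity $\exp_G(s_1M)\cdots\exp_G(s_rM) = \exp_G((s_1 + \cdots + s_r)M)$: partition $[0,c]$ into $r$ subintervals and sample $C$ values from each to manufacture $C^r$-grids inside $\mathcal{H}_{S,r}$ for every $r$ and $C$. The reverse implication follows from part (b) applied contrapositively: if no exponential arc is present, part (b) gives $\dimrob(S) \le \dim(S) < \infty$.

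The uniform family statement I would prove separately. For a definable family $\{S_\alpha\}_{\alpha \in \mathcal{F}}$, introduce the auxiliary definable family
\[
U_{\alpha, M, h} := \{t \in \mathbb{R} : \exp_G(tM)h \in S_\alpha\} \subset \mathbb{R},
\]
and invoke uniform finiteness in o-minimal structures to bound the number of connected components by some $N = N(\{S_\alpha\})$ independent of $(\alpha, M, h)$. Given an AP $\{g^k h\}_{k=0}^{K-1}\subset S_\alpha$, first pass to a sub-progression with common ratio $g' = g^{k_2 - k_1}$ close to the identity via a definable pigeonhole on the orbit $\{g^k\}$ inside the closed cyclic subgroup $\overline{\langle g\rangle}$ (whose identity component is a one-dimensional abelian Lie subgroup). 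Writing $g' = \exp(M)$ for small $M$, the sub-AP realizes at least $\floor{K/(k_2-k_1)}$ consecutive integer points in $U_{\alpha, M, h}$; pigeonholing against the $\le N$ components forces one component to be a nondegenerate interval, which is the desired exponential arc in $S_\alpha$. The converse direction is immediate by sampling $\exp(kM/N)h$ along the arc.

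For part (b), the ``$\ge$'' direction is immediate: from an analytic injection $\Gamma(t_1, \ldots, t_r) = \gamma_1(t_1) \cdots \gamma_r(t_r): [0,1]^r \to S$, sampling each $\gamma_i$ at $C$ points gives a $C^r$-grid in $\mathcal{H}_{S,r}$, so $\dimrob(S) \ge r$. For the ``$\le$'' direction, assume $\dimrob(S) \ge r$. \Cref{cor:Chernikov} produces definable curves $\mathcal{C}_1, \ldots, \mathcal{C}_r \subset G$ with $\mathcal{C}_1 \cdots \mathcal{C}_r \subset S$, and analytic cell decomposition in $\mathbb{R}_{an,\exp}$ lets me refine each $\mathcal{C}_i$ to the image of an analytic injection $\gamma_i: [0, 1] \to G$. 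If $d\Gamma$ has generic rank $r$, restriction to a sub-box makes $\Gamma$ an analytic injection whose image is an $r$-dimensional analytic submanifold of $S$, immediately giving $r \le \dim(S)$.

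The main obstacle is the case of generic rank strictly less than $r$: then $\Gamma$ has positive-dimensional fibers, yielding an analytic one-parameter family $s \mapsto (t_1(s), \ldots, t_r(s))$ with $\gamma_1(t_1(s)) \cdots \gamma_r(t_r(s)) = g$ constant. The technical heart of the argument is to show that such a relation forces $S$ to contain an exponential arc, contradicting the hypothesis and hence ruling out this case. I expect to obtain this by differentiating the relation to produce a nonzero tangent vector in $\mathbb{R}^r$ whose image under $d\Gamma$ vanishes, integrating the corresponding infinitesimal symmetry to a one-parameter subgroup of $G$, and using analyticity together with the containment $\mathcal{C}_1\cdots \mathcal{C}_r \subset S$ to propagate it to a genuine exponential arc inside $S$.
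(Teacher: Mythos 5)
Your \emph{forward} observations are sound: an exponential arc gives grids via the one-parameter subgroup identity, and sampling an analytic injection $\Gamma = \gamma_1(t_1)\cdots\gamma_r(t_r)$ at finite grids shows $\dimrob(S)\ge r$. The overall architecture, however, departs from the paper: the paper reduces \Cref{thm:trajectory} to \Cref{thm:generalominimal} by proving (via \Cref{prop:balls} and \Cref{prop:boundedAP}) that, in $\mathbb{R}_{an,\exp}$, ``contains no exponential arc'' is equivalent to ``contains no unboundedly large arithmetic progressions,'' and then only adds analytic regularity on top of \Cref{thm:generalominimal}, which in turn is built on \Cref{thm:NST}, the bound $|\Bad(S)|=O_{\operatorname{comp}(S)}(1)$, and \Cref{thm:ominimaldimension}. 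None of that scaffolding appears in your proposal, and the two places where you try to replace it both have real gaps.

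First, the family-uniformity argument for part (a). You try to pass from an AP $\{g^k h\}_{k<K}\subset S_\alpha$ to a sub-AP with ratio $g'$ close to the identity by ``pigeonholing on the orbit $\{g^k\}$ inside $\overline{\langle g\rangle}$''. This fails whenever the orbit escapes to infinity (e.g.\ $g$ a hyperbolic element of $GL_2(\mathbb{R})$, or a nontrivial translation in $(\mathbb{R}^d,+)$): then $\overline{\langle g\rangle}$ is discrete, its identity component is trivial, and no power $g^{k_2-k_1}$ is close to the identity, so you never land in the definable chart where $\exp_G$ is defined. The paper circumvents exactly this by first proving only a \emph{local} statement (\Cref{prop:balls}: near each $h\in S$, APs \emph{starting at $h$} inside some neighborhood $N_h$ are bounded), and then globalizing via induction on $\dim S$ using the finiteness of $\Bad(S)$ (\Cref{prop:boundedBadS}, \Cref{prop:boundedAP}), which requires the ``no small torsion'' property \Cref{thm:NST}. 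Your pigeonhole cannot reproduce this global conclusion from a single long AP.

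Second, the ``$\le$'' direction of part (b). You correctly invoke \Cref{cor:Chernikov} to get $\mathcal{C}_1\cdots\mathcal{C}_r\subset S$, but the crucial step --- ruling out $\dim(\mathcal{C}_1\cdots\mathcal{C}_r)<r$ --- is only sketched as ``differentiate the relation, integrate the infinitesimal symmetry to a one-parameter subgroup, propagate to an exponential arc.'' As written this is a gap: a vector in $\ker d\Gamma$ at a point is a \emph{pointwise linear} constraint relating the $\gamma_i'(t_i)$, and there is no evident mechanism that turns it into a one-parameter subgroup of $G$ whose translate stays inside $S$ (the curves $\mathcal{C}_i$ need not be cosets of subgroups, and the constraint mixes all $r$ factors). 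In the paper, the dimension identity $\dim(\mathcal{C}_1\cdots\mathcal{C}_r)=r$ follows instead from \Cref{thm:ominimaldimension} ($\dimrob\le\dimself\le\dim$ for AP-free definable sets), and injectivity is then extracted via a definable section (\Cref{fact:definablelift}) plus \Cref{prop:XYk}, not by any rank/fiber analysis. If you believe your differential-geometric route can work, it would need to be written out carefully; but even if it can, you would still be missing the equivalence with AP-freeness needed to close part (a) and the family statement.
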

A major obstacle to proving this is the non-definability of $\exp_G$ even for $G=GL_d(\mathbb{R})$ when $d\ge 2$. For example, if $\exp_{GL_2(\mathbb{R})}$ were a definable function, then so would the entries of $$\exp_{GL_2(\mathbb{R})}(t\begin{bmatrix}0 & -1\\1 & 0\end{bmatrix})=\begin{bmatrix}\cos(t) & -\sin(t)\\\sin(t) & \cos(t)\end{bmatrix},$$ and although $\cos(t)$ and $\sin(t)$ are analytic, they are not in the restricted-analytic exponential structure, or in fact any o-minimal structure since they can be used to define infinite discrete subsets of $\mathbb{R}$ (which will be explicitly forbidden in the definition of o-minimality). This issue arises in a number of contexts in o-minimal geometry, for example in the development of o-minimal complex analysis by Peterzil and Starchenko \cite{compl1,compl2,compl3}, and in the study of the dynamics of real analytic functions by Scanlon \cite{Scanlon}. It is known that o-minimal structures augmented with the indicator functions of certain unbounded exponential arcs satisfy tameness properties slightly weaker than o-minimality, see Miller \cite{Miller}.

Even if $G$ is a more general topological group without a notion of exponential arc, it is easy to see that containing unboundedly large arithmetic progressions $\{h,gh,\ldots,g^{m-1}h\}$ still causes $\dimrob(S)=\infty$.
\begin{defn}
Say that a subset $S\subset G$ contains \emph{unboundedly large arithmetic progressions} if for all $m$ there is an $m$-element subset $\{h,gh,\ldots,g^{m-1}h\}\subset S$ (in particular $g$ is not torsion of order $<m$).
\end{defn}
Our second theorem considers $S\subset G$ a definable subset of a definable group in an arbitary o-minimal structure $\mathbb{M}_{\mathcal{F}}:=(M,<,\mathcal{F})$ extending a dense linear order $(M,<)$. In an o-minimal structure there is a well-defined notion of ``dimension'' (which agrees with Hausdorff dimension when $M=\mathbb{R}$), and every definable group is canonically equipped with a topology called the ``t-topology'' which makes it a topological group. Since the study of definable groups was first initiated by Pillay \cite{Pillay}, their properties and classification have become a cornerstone of model theory. We refer the reader to the surveys of Otero \cite{otero_2008} and, for o-minimal structures extending a real-closed field, Conversano \cite{conversano2020groups}.
\begin{thmx}
\label{thm:generalominimal}
Suppose that $S\subset G$ is a definable subset of a definable group in an o-minimal structure $\mathbb{M}_{\mathcal{F}}$. Then
\begin{enumerate}[a)]
    \item $\dimrob(S)=\infty$ exactly when $S$ contains unboundedly large arithmetic progressions, in which case there is an $s\in S$ such that every open neighborhood of $s$ contains unboundedly large arithmetic progressions starting at $s$. Furthermore, in a definable family of subsets of $G$, there exists an $N$ such that $S$ contains unboundedly large arithmetic progressions if and only if $S$ contains an arithmetic progression of length $\ge N$.
    \item If $S$ does not contain unboundedly large arithmetic progressions, then $\dimrob(S)= r$ is the largest number so that there is a continuous definable injection $\Gamma(t_1,\ldots,t_r):\prod_{i=1}^r [a_i,b_i]\to S$ of the form $\Gamma(t_1,\ldots,t_r)=\gamma_1(t_1)\ldots \gamma_r(t_r)$ where  each $\gamma_i(t):[a_i,b_i]\to G$ is a continuous definable injection. In particular, $\dimrob(S)\le \dim(S)$.
\end{enumerate}
\end{thmx}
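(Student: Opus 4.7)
The starting point is Corollary~\ref{cor:Chernikov}: when $r = \dimrob(S) < \infty$ it gives definable curves $\mathcal{C}_1,\ldots,\mathcal{C}_r \subset G$ with $\mathcal{C}_1 \cdots \mathcal{C}_r \subset S$, and when $\dimrob(S)=\infty$ it gives such curves for every $r$. For part (b) I would parametrize each $\mathcal{C}_i$ by a continuous definable injection $\gamma_i:[a_i,b_i]\to G$ via cell decomposition, and among all valid $r$-tuples choose one maximizing the o-minimal dimension of the image of $\Phi(t_1,\ldots,t_r) := \gamma_1(t_1)\cdots\gamma_r(t_r)$. I claim this maximum is $r$: if $\dim\mathrm{im}(\Phi)=k<r$ then the generic fiber of $\Phi$ has dimension $r-k\ge 1$, so there is a one-parameter deformation of $(t_1,\ldots,t_r)$ along which the product is constant. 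Rewriting a segment of this deformation via the group law as a genuinely independent factor, one extracts $r+1$ curves whose product lies in $S$, contradicting the maximality of $r$. Once $\dim \mathrm{im}(\Phi) = r$, generic fibers of $\Phi$ are $0$-dimensional, and a further cell decomposition together with definable choice produces a subbox of $\prod[a_i,b_i]$ on which $\Phi$ is injective; restricting the $\gamma_i$'s to the corresponding intervals completes (b), and $\dimrob(S)\le\dim(S)$ follows since the image is $r$-dimensional and lies in $S$.

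For (a), ``unbounded APs $\Rightarrow \dimrob(S) = \infty$'' is immediate by partitioning an AP $\{g^k h\}_{k=0}^{rC-1}\subset S$ into $r$ blocks of $C$ consecutive powers, giving grids in $\mathcal{H}_{S,r}$ of any size. For the hard direction I would apply (b) at each finite level to obtain, for every $r$, injective continuous definable parametrizations $\gamma_1^{(r)},\ldots,\gamma_r^{(r)}$ with product in $S$. These $r$-tuples form a definable family, and via a uniform cell-decomposition and o-minimal compactness argument across $r$ I would extract a single $s\in S$ and a single small definable curve $\gamma:[0,\epsilon]\to G$ with $\gamma(0)=e$ (after right-translating $S$ by $s^{-1}$ so that $e\in Ss^{-1}$) such that every product $\gamma(t_1)\cdots\gamma(t_r)$ lies in $Ss^{-1}$ for $t_i\in[0,\epsilon]$. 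Taking $k$ of the $t_i$ equal to some small $t$ and the rest equal to $0$ gives $\gamma(t)^k \in Ss^{-1}$ for $k=0,\ldots,r$, i.e.\ an AP $\{g^k s : k=0,\ldots,r\}\subset S$ with $g=\gamma(t)$, starting at $s$ and contained in an arbitrarily small neighborhood of $s$ by choosing $t$ small. The uniformity statement for definable families is the usual o-minimal compactness argument: ``contains an AP of length $N$'' is uniformly definable in the family parameter and $N$, so a single $N_0$ suffices across the family.

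The main obstacle is the extraction in (a) of a common small definable curve $\gamma$ through $e$ with all iterated products lying in $Ss^{-1}$, entirely within the o-minimal setting and without a Lie-group exponential. The text flags that even in $\mathbb{R}_{an,\exp}$ the exponential of $GL_2(\mathbb{R})$ is non-definable, and in general $\mathbb{M}_\mathcal{F}$ there is no candidate exponential at all; the argument must proceed by combining the injectivity refinement of (b) with continuity of the $t$-topology group operation on $G$ and uniformity of cell complexity in definable families, to force the required ``approximate one-parameter'' structure concentrated near $s$ inside $S$.
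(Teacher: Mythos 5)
Your proposal diverges from the paper's proof in both parts, and in both places the divergence leaves a genuine gap.

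\textbf{Part (b).} The paper and your proposal both start from \Cref{cor:Chernikov} to get curves $\mathcal{C}_1,\ldots,\mathcal{C}_r$ with $\mathcal{C}_1\cdots\mathcal{C}_r\subset S$. The crux in both is showing $\dim(\mathcal{C}_1\cdots\mathcal{C}_r)=r$. The paper gets this as a consequence of \Cref{thm:ominimaldimension}: since $\mathcal{C}_1\cdots\mathcal{C}_r\subset S$ contains no unbounded arithmetic progressions, $\dim(\mathcal{C}_1\cdots\mathcal{C}_r)\ge\dimrob(\mathcal{C}_1\cdots\mathcal{C}_r)\ge r$ --- a nontrivial input coming from the $\Bad(S)$ and self-translate machinery. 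Your proposal instead tries to get the same conclusion via a maximality-and-deformation argument: if $\dim\mathrm{im}(\Phi)=k<r$, the generic fiber of $\Phi$ is positive-dimensional, and you claim to ``rewrite a segment of this deformation via the group law as a genuinely independent factor,'' producing $r+1$ curves. This step does not work. The deformation direction in a fiber of $\Phi$ is, by definition, a direction along which the product is \emph{constant}; splitting it off does not produce a product $\mathcal{C}_1'\cdots\mathcal{C}_{r+1}'$ of genuinely distinct curves lying in $S$, because the grid $A_1\times\cdots\times A_{r+1}$ you would build from it has one coordinate entering trivially (e.g.\ in $(\mathbb{R},+)$ with $\gamma_1=\gamma_2=\mathrm{id}$ on $[0,1/2]$, the degeneracy $(t_1+u,t_2-u)$ gives $\Phi'(t_1,u,t_2)=t_1+t_2$, whose value is independent of $u$ --- no new factor). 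Moreover, choosing the tuple maximizing $\dim\mathrm{im}(\Phi)$ does not a priori help: nothing in \Cref{cor:Chernikov} says you can increase the image dimension by varying the curves. The inequality $\dimrob\le\dim$ is precisely what needs a global input like \Cref{thm:ominimaldimension} (which itself rests on \Cref{prop:boundedBadS} and hence on \Cref{thm:NST}); your argument tries to produce it from local fiber geometry and cannot.

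\textbf{Part (a).} The easy direction matches the paper. For the hard direction, the paper proves the contrapositive directly: if $S$ has no unbounded APs, then $\dimrob(S)\le\dim(S)<\infty$ by \Cref{thm:ominimaldimension}, and the ``exists $s$ with unbounded APs in every neighborhood'' statement is exactly the contrapositive of \Cref{prop:boundedAP}. Your proposal goes the other way, attempting to extract, from the sequence of $r$-fold factorizations as $r\to\infty$, a single definable curve $\gamma$ through $e$ with $\gamma(t_1)\cdots\gamma(t_r)\in Ss^{-1}$ for all $r$, then reading off arithmetic progressions. You explicitly flag that this extraction is the ``main obstacle'' and do not carry it out. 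This is not a small gap: the paper itself emphasizes that exactly this kind of one-parameter substructure is unavailable in a general o-minimal setting (no definable exponential, no candidate for a one-parameter subgroup), and avoids it entirely by routing through \Cref{prop:boundedAP}/\Cref{prop:boundedBadS}, which in turn uses the ``no small torsion'' result \Cref{thm:NST}. Without invoking \Cref{thm:ominimaldimension} (or rebuilding its supporting chain), the hard direction of (a) and the ``some $s$ with small APs at $s$'' clause remain unproved; the definable-family uniformity is also obtained in the paper as a direct consequence of \Cref{prop:boundedAP}, not an abstract compactness argument.

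In short: the proposal correctly identifies Chernikov--Galvin--Starchenko as the starting point, but replaces the key nontrivial ingredient (\Cref{thm:ominimaldimension}, hence \Cref{prop:boundedBadS} and \Cref{thm:NST}) with a local deformation/compactness heuristic that does not close, and you acknowledge part of the gap yourself.
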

The following question would be interesting to answer.
\begin{qu} Is there an analogue of an ``exponential arc'' for $G$ definable in an arbitrary o-minimal structure which a definable subset has to contain if it contains unboundedly large arithmetic progressions?
\end{qu}

A key structural result for the torsion in definable groups we need to show along the way may be of independent interest. Recall that Gleason \cite{Gleason} and Yamabe \cite{Yamabe} established the following property for locally Euclidean topological groups, a key component to the resolution of Hilbert's fifth problem (that every locally Euclidean topological group is a Lie group) by Gleason \cite{Gleason} and Montgomery-Zippin \cite{MontZip}:
\begin{align}\label{tag:NSS}\tag{NSS}
\text{There is a neighborhood $\widehat{N}$ of $\operatorname{id}_G$ such that $\widehat{N}$ contains no non-trivial subgroups of $G$.}
\end{align}
The (NSS) property was recently identified by Hrushovskii \cite{Hrushovski} as being important in a model-theoretic context, which led to a qualitative theory of approximate groups (later refined in a quantitative form by Breuillard, Green, and Tao \cite{ApproximateGroup}).

There are examples of definable groups $G$ that do not satisfy the (NSS) property, for example $G$ the additive group underlying a sufficiently saturated elementary extension of $(\mathbb{R},<,+,\times)$. Consider the following weakening of (NSS) which we call the ``no small torsion'' property:
\begin{align}\label{tag:NST}\tag{NST}
    \text{For every $m$, there is a neighborhood $\operatorname{id}_G\in \widehat{N}_m$ with no element of order $\le m$ in $\widehat{N}_m\setminus \{\operatorname{id}_G\}$.}
\end{align}
Our third theorem shows definable groups always have the property \eqref{tag:NST}.
\begin{thmx}
\label{thm:NST}
If $G$ is a definable group in an o-minimal structure $\mathbb{M}_{\mathcal{F}}$, then $G$ satisfies the ``no small torsion'' property \eqref{tag:NST}.
\end{thmx}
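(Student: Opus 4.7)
The plan is to argue by contradiction: assume some $m$ fails \eqref{tag:NST}, use o-minimal curve selection to extract a continuous definable curve of non-trivial $k$-torsion elements approaching $\operatorname{id}_G$ for some $k\le m$, and then derive a contradiction from the group structure.

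Step 1 (Curve selection for torsion). If \eqref{tag:NST} fails for $m$, then the closed definable set $\bigcup_{k=2}^{m}\{g\in G: g^k=\operatorname{id}_G\}$ has $\operatorname{id}_G$ as a non-isolated point in the $t$-topology on $G$. Hence $T_k:=\{g\in G: g^k=\operatorname{id}_G\}$ accumulates at $\operatorname{id}_G$ for some $k$ with $2\le k\le m$. Applying the o-minimal curve selection lemma (valid in the definable manifold topology on $G$ supplied by Pillay's theorem) yields a continuous definable map $\gamma:[0,1)\to T_k$ with $\gamma(0)=\operatorname{id}_G$ and $\gamma(t)\ne\operatorname{id}_G$ for $t>0$. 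By uniform finiteness, the definable function $t\mapsto\operatorname{ord}(\gamma(t))$ has finite image (lying in the divisors of $k$); after shrinking, we may assume $\operatorname{ord}(\gamma(t))\equiv d$ for some $d$ with $1<d\le k$.

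Step 2 (Multiple branches at the identity). The maps $\gamma_j(t):=\gamma(t)^j$ for $j=0,1,\ldots,d-1$ are $d$ pairwise distinct continuous definable paths (since $\langle\gamma(t)\rangle$ has order exactly $d$ for every $t>0$), all satisfying $\gamma_j(t)\to\operatorname{id}_G$ as $t\to 0$. Consequently, every $t$-open neighborhood of $\operatorname{id}_G$ eventually contains the entire cyclic subgroup $\langle\gamma(t)\rangle$ of order $d$, giving a $1$-parameter definable family of non-trivial finite subgroups shrinking to $\{\operatorname{id}_G\}$.

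Step 3 (Contradiction --- the main obstacle). The technical core is to rule out such a shrinking definable family of cyclic subgroups of constant order $d>1$. We expect to proceed by induction on $\dim G$: if $\gamma((0,\varepsilon))$ is contained in a proper closed definable subgroup $H$ of $G$, then $\dim H<\dim G$ and the inductive hypothesis applied to $H$ contradicts the existence of $\gamma$. Otherwise $\gamma$ is ``generic'' in $G$, and one must argue directly. When $\mathbb{M}_\mathcal{F}$ extends a real closed field, the $C^p$-cell decomposition lets us differentiate the identity $\gamma(t)^d\equiv\operatorname{id}_G$ in local coordinates near $\operatorname{id}_G$, forcing the first non-vanishing jet of $\gamma$ at $0$ to be annihilated by multiplication by $d$ and hence to vanish, contradicting $\gamma$ being a non-constant definable curve. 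For general $\mathbb{M}_\mathcal{F}$ extending only a dense linear order, where such derivatives are unavailable, this step should be handled either by a model-theoretic transfer to the real closed case, or by a purely topological analysis showing that the $d-1$ non-trivial branches $\gamma_j$ cannot coexist in the definable cell structure at $\operatorname{id}_G$. Carrying out this reduction in full generality is the expected main difficulty.
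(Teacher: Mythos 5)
Your approach is genuinely different from the paper's, but it has a gap you yourself flag, and that gap is exactly where the substance of the theorem lies. Steps 1 and 2 are fine: curve selection works in any o-minimal structure, and your observation that you can normalize the order along the curve and thereby get a definable family of order-$d$ cyclic subgroups shrinking to $\{\operatorname{id}_G\}$ is correct. For Step 3 your differentiation argument does close the case where $\mathbb{M}_{\mathcal F}$ expands a real closed field, and in fact you can do it more cleanly than via jets: in definable $C^1$ coordinates near $\operatorname{id}_G$ the power map $P_d(g)=g^d$ has $dP_d|_{\operatorname{id}_G}=d\cdot\operatorname{Id}$, so $P_d$ is a definable local homeomorphism and $P_d^{-1}(\operatorname{id}_G)$ is locally $\{\operatorname{id}_G\}$ --- no curve or induction needed at all. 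But for an o-minimal expansion of a bare dense linear order there is no differential calculus, and the two escape routes you propose are not available as stated: there is no general transfer embedding an arbitrary definable group into one over a real closed field, and the suggested ``purely topological'' argument is unspecified. The reduction to a proper closed definable subgroup $H$ when $\gamma$ is contained in one does not give a real dichotomy either, since ``generic'' has no content in this setting.

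The paper avoids the local analysis entirely and proves the result structurally. It first proves an extension lemma: if a definable normal subgroup $H$ and the quotient $G/H$ both satisfy \eqref{tag:NST}, so does $G$. It then handles finite and abelian $G$ directly (abelian: by Strzebonski the $k$-torsion is a finite definable subgroup, so one just removes finitely many points from a neighbourhood), and handles any definable subgroup of $\prod_i GL_{n_i}(\mathcal R_i)$ by transferring the first-order statement \eqref{tag:NST}$_m$ from $GL_n(\mathbb R)$, where the matrix exponential gives it. Finally it reduces the general case to these: pass to the definably connected component (finite quotient), then repeatedly quotient by the centre; the dimension strictly increases at each step by \Cref{prop:definableloci} until the centre is finite, and one more quotient makes the group centerless, at which point Peterzil--Pillay--Starchenko realizes it inside a product of $GL_n$'s over real closed fields. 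The extension lemma then propagates \eqref{tag:NST} back up the tower. This route is what handles the arbitrary $(M,<,\mathcal F)$ case that your Step 3 cannot reach.
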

 
In \cite{fox2021geometric}, Fox, Kwan and the present author considered a more refined notion of dimension than $\dimrob(S)$ for applications to anti-concentration probabilities for random walks in $(\mathbb{R}^d,+)$ on definable sets. We repeat the definition in the context of general groups.
\begin{defn}
\label{defn:selftranslate}
Given a group $G$, we define a property of subsets $S\subset G$ called \emph{self-translate dimension $\le k$ of complexity $C$} recursively as follows.
\begin{itemize}
    \item All subsets $S\subset G$ with $|S| \le C$ have self-translate dimension $\le 0$ of complexity $C$.
    \item $S$ has self-translate dimension $\le k$ of complexity $C$ if we can write $S=S_1\cup \ldots \cup S_C$ such that for all $i$ and $\operatorname{id}_G\ne g\in G$ we have $S_i \cap gS_i$ has self-translate dimension $\le k-1$ of complexity $C$.
\end{itemize}
Then define the \emph{self-translate dimension of $S$} as
$$\dimself(S)=\inf_k\{k:\text{$S$ has self-translate dimension $\le k$ for some complexity $C$}\}\in \mathbb{N}\cup \{\infty\}.$$
\end{defn}
The motivating example (as discussed in \cite{fox2021geometric}) is when $G=(\mathbb{C}^d,+)$ and $S\subset \mathbb{C}^d$ is an algebraic variety (i.e. defined by a system of polynomial equations) not containing a line segment. Then one can inductively show that $S$ is of self-translate dimension $\le \dim(S)$ of complexity $O_{\deg(S)}(1)$ by taking $S_1\cup \ldots \cup S_C$ to be the decomposition into irreducible components. Indeed, for $v\ne 0$ irreducibility implies $S_i\cap (S_i+v)$ has dimension $\le \dim(S)-1$ (as one can show this is a proper subvariety of $S_i$ by the line segment condition), and B\'ezout's theorem guarantees $\deg(S_i \cap (S_i+v))=O_{\deg(S)}(1)$.

\begin{rem}A set $S$ has self-translate dimension $\le k$ of complexity $1$ precisely if $S$ contains no product set of the form $\{a_1,b_1\}\cdot \{a_2,b_2\}\cdot \ldots \cdot \{a_{k+1},b_{k+1}\}$ with $a_i\ne b_i$ for all $i$, or equivalently no product set $a\{\operatorname{id}_G,x_1\}\ldots\{\operatorname{id}_G,x_n\}$ with all $x_i \ne \operatorname{id}_G$. For $G=(\mathbb{R}^d,+)$, it was shown in \cite[Proposition 6.2]{fox2021geometric} that if $S$ is the boundary of a strictly convex set, then it can be partitioned into $\le 2d$ sets each of self-translate dimension $\le d-1$ of complexity $1$.
\end{rem}

As implicitly observed in \cite{fox2021geometric}, the concrete advantage of $\dimself(S)$ over $\dimrob(S)$ is that if $|A_1|=\ldots=|A_{r+1}|=n$, then
$$\dimself(S)\le r \implies |\mathcal{H}_{S,r}\cap A_1\times \cdots \times A_{r+1}|=O_{S,r}(n^{r+1-1/2^{\dimself(S)}}),$$
while standard results on hypergraph avoidance only imply $$\dimrob(S)\le r \implies |\mathcal{H}_{S,r}\cap A_1\times \cdots \times A_{r+1}|=O_r(n^{r+1-\epsilon_{r,S}})$$ for some non-explicit $\epsilon_{r,S}$. This explicit power-savings in the exponent (in a slightly more general weighted setting) implies our fourth theorem, which for $n$-step random walks $X=g_1\cdots g_n$ (for independent $G$-valued random variables $g_1,\ldots,g_n$) converts polynomial upper bounds $\sup_g\mathbb{P}(X=g)\le n^{-C}$ and inverse theorems describing the structure of the distributions of the steps $g_1,\ldots,g_n$ of $X$ when $\sup_g\mathbb{P}(X=g)\ge n^{-C'}$ into analogous theorems with $\sup_g\mathbb{P}(X=g)$ replaced with $\mathbb{P}(X\in S)$.

\begin{notat}\label{notat}
For an $n$-step random walk $X=g_1\cdots g_n\in G$, where the $g_i$ are drawn independently from  distributions $\mu_i$, we will notate
\begin{align*}
p_0&:=\inf_{i,g}(\mu_i(g):g\in\operatorname{supp}(\mu_i)).\\
    \rho&:=\sup_g\mathbb{P}(X=g),\text{ and }\\
    \rho_S&:=\mathbb{P}(X\in S).
\end{align*}
\end{notat}
\begin{thmx}
\label{thm:selfirreduciblecomb}
Let $G$ be a group, and $g_1,\ldots,g_n\in G$ be independent elements chosen according to some distributions $\mu_1,\ldots,\mu_n$. Let $X=g_1\cdots g_n$. Then if $S\subset G$ is a subset of self-translate dimension $\le k$ of complexity $C$, then
$$\rho_S \le O_{C,k}(p_0^{-1}\rho^{1/(2^{k+1}-1)})$$
provided $p_0\ne 0$. Finally, if $X_0,\ldots,X_k$ are independent $G$-valued random variables and $\rho_i:=\sup_g\mathbb{P}(X_i=g)$, then
$$\mathbb{P}(\prod_{i=0}^k X_i\in S)=O_{C,k}(\max_i \rho_i^{1/2^i}).$$
\end{thmx}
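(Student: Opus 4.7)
The two bounds are best established in tandem: I will prove the second (max-of-$\rho_i^{1/2^i}$) bound first by induction on $k$, and then deduce the first bound by chopping $X = g_1 \cdots g_n$ into $k+1$ blocks of carefully tuned concentration and invoking the second bound.

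The base case $k = 0$ of the second bound is immediate since $|S| \le C$ forces $\mathbb{P}(X_0 \in S) \le C \rho_0$. For the inductive step, decompose $S = S_1 \cup \cdots \cup S_C$ so that each $S_j \cap g S_j$ has self-translate dimension $\le k-1$ of complexity $C$ for $g \ne \operatorname{id}_G$, and set $p_j := \mathbb{P}(\prod_i X_i \in S_j)$. Introduce an independent copy $X_0'$ of $X_0$, let $u := X_0' X_0^{-1}$, and apply Cauchy--Schwarz conditional on $X_1, \ldots, X_k$ to obtain
\[ p_j^2 \le \mathbb{P}(X_0 X_1 \cdots X_k \in S_j,\ X_0' X_1 \cdots X_k \in S_j) = \mathbb{P}\bigl(X_0 X_1 \cdots X_k \in S_j \cap u^{-1} S_j\bigr). \]
The event $u = \operatorname{id}_G$ contributes at most $\sum_h \mathbb{P}(X_0 = h)^2 \mathbb{P}(h X_1 \cdots X_k \in S_j) \le \rho_0 p_j$. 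The main subtlety is the case $u \ne \operatorname{id}_G$: conditional on $u = u_0 \ne \operatorname{id}_G$ the distribution of $X_0$ becomes biased (its concentration may degrade to $\rho_0^2/\mathbb{P}(u = u_0)$), but it remains independent of $X_1, \ldots, X_k$, so merging the biased $X_0$ with $X_1$ produces a single factor $Y_0$ whose concentration is still at most $\rho_1$. Since $S_j \cap u_0^{-1} S_j$ has self-translate dimension $\le k-1$ of complexity $C$, the inductive hypothesis applied to the $k$ independent variables $Y_0, X_2, \ldots, X_k$ delivers a bound $O\bigl(\max_{i \ge 1} \rho_i^{1/2^{i-1}}\bigr)$, uniform in $u_0$. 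Combining the two cases and solving the quadratic $p_j^2 \le \rho_0 p_j + O(\max_{i \ge 1} \rho_i^{1/2^{i-1}})$ yields $p_j \le O(\max_i \rho_i^{1/2^i})$; summing over $j$ completes the induction.

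To deduce the first bound, set $\sigma := \rho^{1/(2^{k+1}-1)}$; if $p_0^{-1}\sigma \ge 1$ the claim is trivial, so assume $\sigma < p_0$. The key monotonicity is that for any consecutive block the quantity $\rho_{a+1:b} := \sup_h \mathbb{P}(g_{a+1} \cdots g_b = h)$ is non-increasing in $b$ and drops by at most a factor of $p_0$ when $b$ is incremented by one. Greedily split $X = X_0 X_1 \cdots X_k$ by choosing $a_{i+1}$ as the smallest index $> a_i$ with $\rho_{a_i+1:a_{i+1}} \le p_0^{-1}\sigma^{2^i}$; minimality of the $a_j$'s together with the $p_0$-drop principle give the lower bound $\rho(X_j) > \sigma^{2^j}$, so that $\rho \ge \prod_j \rho(X_j)$ yields $\rho_{a_i+1:n} \le \sigma^{2^{k+1}-2^i} \le p_0^{-1}\sigma^{2^i}$, which in fact guarantees $a_{i+1} \le n-1$ for every $i \le k-1$ (so the final block $X_k = g_{a_k+1} \cdots g_n$ has at least one step). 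Consequently $\rho(X_i) \le p_0^{-1}\sigma^{2^i}$ for all $i$, giving $\rho(X_i)^{1/2^i} \le p_0^{-1/2^i}\sigma \le p_0^{-1}\sigma$. Applying the already-proved second bound to $X_0, \ldots, X_k$ now delivers $\mathbb{P}(X \in S) \le O(p_0^{-1}\sigma) = O(p_0^{-1}\rho^{1/(2^{k+1}-1)})$.
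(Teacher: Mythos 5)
Your proof is correct and follows essentially the same strategy as the paper's: an induction on $k$ in which a Cauchy--Schwarz decoupling step reduces to the lower-dimensional sets $S_j \cap g S_j$, combined with a greedy $p_0$-granularity splitting of the $n$-step walk into $k+1$ blocks with geometrically spaced concentrations. The only cosmetic difference is that you re-derive the paper's decoupling lemma (\Cref{lem:decouple}) inline by conditioning on $u = X_0' X_0^{-1}$ and absorbing the biased conditional $X_0$ into $X_1$, whereas the paper invokes a standalone decoupling lemma and applies the inductive bound directly to $\sup_{y\ne y'}\mathbb{P}\bigl(X_1\cdots X_k \in y^{-1}S\cap y'^{-1}S\bigr)$; both routes use \Cref{rem:equivdef} in the same way and give the same exponents.
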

This theorem is purely combinatorial, and will follow essentially identically to the proof of the analogous statement \cite[Theorem 5.1]{fox2021geometric} (with a minor optimization included, see \Cref{rem:betterexp}).

To apply this result, we need to know an upper bound on $\dimself(S)$. As any partition of a set containing unboundedly large arithmetic progressions contains unboundedly large arithmetic progressions (by van der Waerden's theorem on arithmetic progressions), it is easy to see that just like for $\dimrob(S)$, we have $\dimself(S)=\infty$ if $S$ contains unboundedly large arithmetic progressions. Assuming this doesn't happen, our final theorem compares these two notions of dimension with $\dim(S)$ in the o-minimal setting.
\begin{thmx}
\label{thm:ominimaldimension}
Suppose that $S\subset G$ is a definable subset of a definable group in an o-minimal structure $\mathbb{M}_{\mathcal{F}}$. Then if $S$ does not contain unboundedly large arithmetic progressions, we have
$$\dimrob(S)\le \dimself(S)\le \dim(S).$$
In a definable family of subsets of $G$, there is a constant $C$ such that every subset $S$ not containing unboundedly large arithmetic progressions has self-translate dimension $\le \dim(S)$ of complexity $C$.
\end{thmx}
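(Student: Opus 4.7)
The inequality $\dimrob(S) \le \dimself(S)$ is an immediate consequence of \Cref{thm:selfirreduciblecomb}: if $\dimself(S) = k$ yet $S$ contained an arbitrarily large $(k+1)$-grid $A_1 \times \cdots \times A_{k+1} \subset \mathcal{H}_{S,k+1}$ with each $|A_i| = n$, then taking independent $X_i$ uniform on $A_i$ would give $1 = \mathbb{P}(X_1 \cdots X_{k+1} \in S) = O_{C,k}(\max_i n^{-1/2^i}) = O(n^{-1/2^k})$, which is impossible as $n \to \infty$.

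For the main inequality $\dimself(S) \le \dim(S)$ with uniform complexity across a definable family $\{S_t\}_{t \in T}$, I would induct on $d := \dim(S)$. The base case $d = 0$ is immediate from uniform finiteness of definable families of finite sets. The inductive step reduces to finding a uniform $C$ and a definable partition $S_t = S_{t,1} \cup \cdots \cup S_{t,C}$ with $\dim(S_{t,i} \cap g S_{t,i}) < d$ for every $i$ and every $g \ne \operatorname{id}_G$. Applying the inductive hypothesis to the definable family $\{S_{t,i} \cap g S_{t,i}\}_{(t,g,i)}$ (each member of dimension $<d$, and each without unbounded APs as a subset of $S_t$) then supplies self-translate dim $\le d-1$ of uniform complexity $C'$; combining these yields $\dimself(S_t) \le d$ of complexity $\max(C, C')$.

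To construct the partition, I would consider $\Bad(S_t) := \{g \in G : \dim(S_t \cap g S_t) = d\}$, a definable subset of $G$, and establish the key structural lemma that $\Bad(S_t) \setminus \{\operatorname{id}_G\}$ is finite with uniformly bounded size $m \le m_0$ in the family and with elements uniformly bounded away from $\operatorname{id}_G$. Granting this, enumerate $\Bad(S_t) \setminus \{\operatorname{id}_G\} = \{b_1, \ldots, b_m\}$. Each $b_j$ fails to stabilize $S_t$: \Cref{thm:NST} combined with the no-unbounded-AP hypothesis forces $\operatorname{Stab}(S_t)$ trivial near $\operatorname{id}_G$ (infinite-order stabilizer elements yield unbounded orbital APs, and finite-order ones of bounded order are pushed away from $\operatorname{id}_G$ by NST). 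Hence $S_t \cap b_j S_t \subsetneq S_t$ is a proper $d$-dimensional subset, and using cell decomposition of $S_t$ adapted to $S_t \cap b_j S_t$ one can choose a definable bipartition $S_t = U_j \cup (S_t \setminus U_j)$ with both $U_j \cap b_j U_j$ and $(S_t \setminus U_j) \cap b_j(S_t \setminus U_j)$ of dim $<d$ (for instance by assigning cells of the decomposition to the two parts so that no $d$-dimensional cell of $S_t \cap b_j S_t$ is fully contained in $U_j$ together with its $b_j^{-1}$-translate). Taking the common refinement over $j = 1, \ldots, m$ gives at most $2^{m_0}$ pieces satisfying the required condition.

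The main obstacle is the Bad-set lemma. I would argue by contradiction: assume a definable arc $\gamma \colon [0, \epsilon] \to G$ exists with $\gamma(0) = \operatorname{id}_G$ and $\gamma(t) \in \Bad(S_t)$ for $t > 0$. Since finite intersections of dim-$d$ definable subsets of a dim-$d$ ambient set remain dim-$d$, for every $N$ and every choice of $t_1, \ldots, t_N \in (0, \epsilon]$ the set $S_t \cap \bigcap_{i=1}^N \gamma(t_i) S_t$ has dim $d$. An iterative selection $s_k := \gamma(t_k)^{-1} s_{k-1} \in S_t$, combined with \Cref{thm:NST} applied to iterates of a single $\gamma(t_*)$ (to force effectively infinite order close to $\operatorname{id}_G$), should allow extraction of a bona fide arithmetic progression in $S_t$ by arranging that the $\gamma(t_i)$ approximate a one-parameter subgroup, possibly after invoking the structural parametrization of \Cref{thm:generalominimal}(b) to straighten $\gamma$. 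Arcs in $\Bad(S_t)$ not accumulating at $\operatorname{id}_G$ reduce to the identity-accumulation case by considering differences $\gamma(t_1)^{-1} \gamma(t_2)$ for nearby $t_1, t_2$. Organizing this extraction to produce a true arithmetic progression (rather than merely a multiplicative grid) and propagating the resulting bounds uniformly across the family is the technical crux I expect to occupy the bulk of the proof.
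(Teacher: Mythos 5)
Your inequality $\dimrob(S)\le\dimself(S)$ is exactly the paper's argument via \Cref{thm:selfirreduciblecomb}, and your broad plan for $\dimself(S)\le\dim(S)$ --- induction on $\dim(S)$ together with a bounded definable partition of $S$ into pieces whose pairwise translate-intersections drop dimension --- matches the paper's strategy. However, two of your steps have genuine gaps. In the Bad-set lemma sketch you assert that finite intersections of dim-$d$ definable subsets of a dim-$d$ set remain dim-$d$; this is false (two disjoint top-dimensional cells already violate it), so an arc $\gamma\subset\Bad(S)$ does not directly yield $\dim\left(S\cap\bigcap_i\gamma(t_i)S\right)=d$. The tentative appeal to \Cref{thm:generalominimal}(b) to straighten $\gamma$ is also circular, since that theorem is derived from \Cref{thm:ominimaldimension} in the paper. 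The paper's proof of \Cref{prop:boundedBadS} instead uses \Cref{prop:XYk} to reinterpret $g\in\Bad(S)$ as a local translate-matching $g(N\cap S)=gN\cap S$, extracts from $\dim\Bad(S)\ge 1$ a continuous definable family $(h(t),b_0,g(t))$ of such matches with a fixed chart parameter $b_0$, and iterates $g(t)^{-1}g(x)$ for $t$ near $x$ (ruling out small torsion via \Cref{thm:NST}) to produce arbitrarily long arithmetic progressions in a fixed neighborhood of $h(x)\in S$, contradicting the local-AP hypothesis.

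Your partitioning step is also unjustified: the $b_j$-translate overlap graph on cells of a decomposition of $S$ need not be bipartite, and if $b_j$ is a torsion element with $b_jS=S$ (which the no-unbounded-AP hypothesis does not rule out) your claim that $S\cap b_jS\subsetneq S$ fails outright, so there is in general no bipartition $U_j\cup(S\setminus U_j)$ both of whose parts' self-translate intersections drop dimension. The paper (\Cref{prop:breakup}) circumvents this by first proving, via \Cref{prop:boundedAP}, a uniform bound $m=O_{\operatorname{comp}(S)}(1)$ such that every $g\ne\operatorname{id}_G$ is either torsion of order $\le m$ or satisfies $\{h,gh,\ldots,g^{m-1}h\}\not\subset S$ for all $h$. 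It then handles one $g\in\Bad(S)$ at a time: partition $S$ into $\le m$ pieces $S'_i$ with $S'_i\cap gS'_i=\emptyset$ using ``$i$-th smallest in the $g$-orbit'' (torsion case) or ``$g$-orbit run-length inside $S$'' (non-torsion case), and recurse, terminating because $\Bad(S'_i)\subsetneq\Bad(S)$ whenever $\dim(S'_i)=\dim(S)$. This sidesteps both the bipartiteness issue and the stabilizer issue.
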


Our strategy for proving $\dimself(S)\le \dim(S)$ is broadly the same as in \cite{fox2021geometric}, essentially showing $S$ satisfies the analogue of ``self-translate dimension $\le k$ of complexity $C$'' with self-translate dimension replaced everywhere by dimension, and the complexity $C$ being bounded in definable families.

We postpone applications to random walks in definable groups to \Cref{sec:probapp}, and leave the remainder of the article to proving the main theorems above.

\textbf{Acknowledgements:} I would like to thank Artem Chernikov for helpful comments, Matthew Kwan for helpful remarks on the exposition, Isaac Goldbring for pointing out an example of a definable group not satisfying (NSS), and Thomas Scanlon for additional references.

\section{Applications to random walks in definable groups}
\label{sec:probapp}
For a group $G$ and a family of probability distributions on $G$, we can create an $n$-step random walk $X=g_1\cdots g_n$ by taking each $g_i$ independently from some distribution $\mu_i$ inside the family.  Littlewood-Offord theory is the study of anti-concentration properties for $X$. Formally, for a subset $S\subset G$, 
the main questions are the following.
\begin{itemize}
    \item (Forward Question) What is the best upper bound on $\mathbb{P}(X\in S)$ independent of $\mu_1,\ldots,\mu_n$?
    \item (Inverse Question) If $\mathbb{P}(X\in S)$ is large, then what can we say about $\mu_1,\ldots,\mu_n$?
\end{itemize}

Classically, the forward question was considered first by Littlewood and Offord \cite{LO43} for $G=(\mathbb{R}^d,+)$, $S=B(z,r)$ a ball of radius $r$ centered around $z$, and the family of distributions $\{\mu_{v}:v\in \mathbb{R}^d, \|v\|\ge 1\}$, where $\mu_v(v)=\mu_v(-v)=\frac{1}{2}$. Then $X=\sum_{i=1}^n \epsilon_iv_i$ is a random signed sum with $(\epsilon_1,\ldots,\epsilon_n)\in \{-1,1\}^n$ a uniformly random sequence of signs, and $v_i\in \mathbb{R}^d$ fixed vectors with $\|v_i\|\ge 1$ for all $i$. The optimal forward theorem was obtained by Erd\H{o}s \cite{Erd45} for $d=1$ and by Kleitman \cite{K70} for general $d$ (with the asymptotically best multiplicative constant later obtained by Frankl-F\"uredi \cite{FF88}), showing $$\mathbb{P} (\|\sum_{i=1}^n\epsilon_i v_i -z\|\le r)= O_{d,r}(n^{-1/2}).$$
Up to the multiplicative constant, this is also the best possible upper bound for $\mathbb{P}(\sum_{i=1}^n \epsilon_iv_i=z)$, so the result can be appropriately interpreted as saying that such random walks cannot concentrate inside balls much better than they can concentrate on individual points.


Inverse results in this setting were first considered in the estimation of singularity probabilities for random matrix ensembles by Tao and Vu \cite{TV09a,TV09b}. The sharpest inverse result, established by Nguyen and Vu \cite{NV11}, says that for $\epsilon<1$ and $C>0$, if $v_1,\ldots,v_n$ are non-zero elements of a torsion-free abelian group $G$ (such as $(\mathbb{R}^d,+)$), then if $\rho:=\sup_z\mathbb{P}(\sum_{i=1}^n \epsilon_iv_i=z) \ge n^{-C}$, there exists a symmetric proper generalized arithmetic progression $Q$ of rank $r=O_{C,\epsilon}(1)$ containing all but $\epsilon n$ of the elements of $V$ (counted with multiplicity) such that $|Q|= O_{C,\epsilon}(\rho^{-1}n^{-r/2})$.

In \cite{fox2021geometric}, Fox, Kwan and the present author studied for subsets $S\subset \mathbb{R}^d$ anti-concentration results for $\mathbb{P}(\sum_{i=1}^n \epsilon_iv_i\in S)$ assuming only $v_i\ne 0$ for all $i$. The possibility of arbitrarily small steps $v_i$ means that the additive structure of $S$ plays a much greater role.

\begin{exmp}
\label{exmp:intervalbad}
For $S=B(z,r)$ there is no non-trivial upper bound on $\mathbb{P}(X\in S)$ because taking $v_1=z$ and $\|v_i\|\le r/n$ for $i=2,\ldots,n$ gives $\mathbb{P}(X\in S)\ge \frac{1}{2}$. In fact, we can't get any non-trivial upper bound for $\mathbb{P}(X\in S)$ if $S\subset \mathbb{R}^d$ contains a line segment, since we can take $v_1$ to be the midpoint of the segment and $v_2,\ldots,v_n$ to be small in the direction of the segment. Similar issues can arise if $S$ is allowed to contain oscillatory sets such as $y=\sin(x)$, or if $S$ has a fractal structure.
\end{exmp}

In \cite{fox2021geometric} it was shown that definability in an o-minimal structure $\mathbb{R}_{\mathcal{F}}$ extending $\mathbb{R}_{alg}:=(\mathbb{R},<,\times,+)$ (such as $\mathbb{R}_{alg}$ or $\mathbb{R}_{an,\exp}$), together with the necessary condition of not containing any line segment, is sufficient to obtain non-trivial forward and inverse theorems for $\mathbb{P}(X\in S)$. 
In the following, we note that all reasonable notions of dimension agree for definable sets in $\mathbb{R}_{\mathcal{F}}$ (see \Cref{sec:dim}), and $\rho,\rho_S$ were defined previously in \Cref{notat}.
\begin{thm}[Fox, Kwan, S. \cite{fox2021geometric}]\label{thm:FKS} Let $v_1,\ldots,v_n\in \mathbb{R}^d$ be non-zero vectors, and let $S\subset \mathbb{R}^d$ be a dimension $k$ set definable in an o-minimal structure $\mathbb{R}_{\mathcal{F}}$ extending $\mathbb{R}_{alg}$ such that $S$ does not contain any line segment. Let $(\epsilon_1,\ldots,\epsilon_n)\in \{-1,1\}^n$ be a uniformly random sequence of signs, and $X=\sum_{i=1}^n \epsilon_iv_i$. Then
\begin{enumerate}
    \item (Forward theorem) $\rho_S=n^{-1/2+o_S(1)}$.
    \item (Inverse bootstrapping theorem\footnote{The exponents on $\rho$ and $\rho_i$ here are slightly optimized from \cite{fox2021geometric} with an essentially identical proof, see \Cref{rem:betterexp}.}) $\rho_S = O_S(\rho^{1/(2^{k+1}-1)})$.
    Additionally, if $X_0,\ldots,X_k$ are independent $\mathbb{R}^d$-valued random variables and $\rho_i=\sup_g\mathbb{P}(X_i=g)$, then $$\mathbb{P}(X_0+\cdots+X_k\in S)= O_S(\max_i \rho_i^{1/2^i}).$$
\end{enumerate}
If $S$ is part of a definable family of subsets of $G$, then the $o_s$ and $O_S$ constants can be taken to be uniform in the family.
\end{thm}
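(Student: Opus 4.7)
The plan is to derive both parts from \Cref{thm:selfirreduciblecomb} and \Cref{thm:ominimaldimension}, together with the Nguyen--Vu inverse Littlewood-Offord machinery for part 1. The first observation is that for a definable $S \subset (\mathbb{R}^d,+)$ containing no line segment, $S$ contains no unboundedly large arithmetic progression: a long progression $\{h, v+h, \ldots, (m-1)v+h\} \subset S$ lying in a bounded region has step $\|v\| = O(1/m)$, and passing to a limit---formally by applying \Cref{thm:generalominimal} to the definable family of rescaled translates of $S$---forces a genuine line segment into $S$, a contradiction. Hence \Cref{thm:ominimaldimension} supplies $\dimself(S) \le \dim(S) = k$ with complexity $C$ uniform across the family containing $S$. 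Part 2 is then immediate: for the random walk $X = \sum_{i=1}^n \epsilon_i v_i$ the per-step minimum probability is $p_0 = 1/2$, so the first conclusion of \Cref{thm:selfirreduciblecomb} gives $\rho_S = O_S(\rho^{1/(2^{k+1}-1)})$, while its second conclusion directly supplies the bound for $X_0 + \cdots + X_k$.

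For the forward theorem in part 1, I would dichotomize on the magnitude of $\rho$. When $\rho \le n^{-C(k,\delta)}$ for a suitable constant $C(k,\delta)$, the inverse bootstrapping from part 2 immediately gives $\rho_S \le n^{-1/2 + \delta}$. When $\rho \ge n^{-C(k,\delta)}$, I would invoke the Nguyen--Vu inverse Littlewood-Offord theorem: after discarding $o(n)$ of the $v_i$, the remaining steps lie in a symmetric proper generalized arithmetic progression $Q$ of bounded rank $r = O_{k,\delta}(1)$ with $|Q| = O(\rho^{-1} n^{-r/2})$. A local central limit theorem in the rank-$r$ parameter space of $Q$ then shows that $X$ is approximately uniformly distributed on a rank-$r$ box of cardinality $\sim |Q|$ inside the affine span of $Q$.

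The main technical obstacle is the resulting counting problem: one must show $|S \cap (Q + t)|$ is small compared to $|Q|$ for a typical translate $t$. Here I would again apply \Cref{thm:ominimaldimension}, this time to the definable family of intersections $(S - t) \cap (\text{affine span of } Q)$, exploiting that the uniformly bounded self-translate complexity of $S$ restricts how many lattice points of $Q$ can lie in any single affine copy of $S$. A dyadic aggregation of these pointwise counts against the approximate Gaussian density of $X$ on $Q$ then yields $\rho_S \le n^{-1/2 + o_S(1)}$, with the $o_S(1)$ term depending only on the definable family, completing the forward bound.
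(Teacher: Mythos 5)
Your deduction of part~2 from the paper's machinery is correct: for $X=\sum \epsilon_i v_i$ every step has $p_0=\tfrac12$, and once one knows $S$ avoids unboundedly large arithmetic progressions (which, as you note and as the paper notes in its footnote to this theorem, is equivalent to avoiding a line segment in $(\mathbb{R}^d,+)$ by the Uniform Finiteness Principle), \Cref{thm:ominimaldimension} gives $\dimself(S)\le k$ with uniformly bounded complexity, and \Cref{thm:selfirreduciblecomb} delivers both bounds. This matches the intended derivation.

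Part~1 is where there is a genuine gap. Your dichotomy is the right first move, and the small-$\rho$ branch is fine, but your large-$\rho$ branch does not actually close. After invoking Nguyen--Vu to get the bounded-rank symmetric GAP $Q$ with $|Q|=O(\rho^{-1}n^{-r/2})$, you need to bound $|S\cap (Q'+t)|$ for the relevant dilates $Q'$ of $Q$ by something like $|Q'|^{o(1)}$ in order to win the $n^{-1/2+o(1)}$ bound against the local CLT. Applying \Cref{thm:ominimaldimension} (or self-translate dimension more generally) to affine slices of $S$ only yields a power saving of the form $|Q'|^{1-c_k}$ with $c_k=2^{-k}$, which is nowhere near strong enough: in the regime where $\rho\sim n^{-A}$ for a fixed but small $A$, this gives $\rho_S\lesssim n^{-A/(2^{k+1}-1)}$, which can be far weaker than $n^{-1/2}$. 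The paper is explicit that the forward theorem of \cite{fox2021geometric} relies not only on the inverse bootstrapping but on two further ingredients you have not invoked: a theorem of Pila \cite{P09} giving $|Q'|^{\epsilon}$-type bounds on the number of points of a bounded-rank GAP lying in a definable set with no positive-dimensional semialgebraic part, and Kane's progress \cite{Kan14} on the Gotsman--Linial conjecture. The ``dyadic aggregation against the approximate Gaussian density'' step in your argument is precisely where those inputs are needed and cannot be replaced by self-translate dimension alone.
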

As explained in the introduction, a concrete example of the last part of the theorem is that the ellipses in the definable family $\{(x,y):ax^2+by^2=1\}_{a,b>0}$ yield the same $o_S$ and $O_S$ constants.

The inverse bootstrapping theorem allows an existing inverse theorem (such as the one of Nguyen-Vu mentioned earlier) for the structure of $v_1,\ldots,v_n$ when $\rho$ is large to be bootstrapped to a structural theorem for when $\rho_S$ is large (since $\rho_S$ is bounded above by a function of $\rho$). In the same way, it can also bootstrap a forward result upper bounding $\rho$, such as the bound $\rho=O(n^{-1/2})$, to a forward result upper bounding $\rho_S$, although the result will not be as strong as the forward theorem $\rho=n^{-1/2+o_S(1)}$.

The proof of the forward theorem $\rho=n^{-1/2+o_S(1)}$ from \cite{fox2021geometric} requires the inverse bootstrapping theorem, together with a deep theorem of Pila \cite{P09} concerning the interaction of sets definable in an o-minimal structure with bounded rank generalized arithmetic progressions, and progress by Kane \cite{Kan14} on the Gotsman-Linial conjecture. Unfortunately the latter two are quite particular to the case $G=(\mathbb{R}^d,+)$, so we do not know at the moment how to generalize this stronger forward theorem to other definable groups. 

\Cref{thm:trajectory} and \Cref{thm:ominimaldimension}, in conjunction with \Cref{thm:selfirreduciblecomb} directly generalize the inverse bootstrapping theorem from $G=(\mathbb{R}^d,+)$ to arbitrary definable groups.\footnote{Technically, the scope of \Cref{thm:trajectory} is only for o-minimal expansions of $\mathbb{R}_{an,\exp}$, which does not contain all o-minimal expansions of $\mathbb{R}_{alg}$ as covered by \Cref{thm:FKS}. \Cref{thm:ominimaldimension}, which does include o-minimal expansions of $\mathbb{R}_{alg}$, has the condition of avoiding a line segment in $(\mathbb{R}^d,+)$ replaced with the stronger condition of not containing unboundedly large arithmetic progressions, but fortunately, these latter conditions are in fact equivalent in $(\mathbb{R}^d,+)$ by the Uniform Finiteness Principle \Cref{thm:unifbound} applied to the definable family of subsets $\{S\cap \{y+tv:t\in \mathbb{R}\}\}_{y\in \mathbb{R}^d,v\in \mathbb{R}^d\setminus 0}$ of $\mathbb{R}^d$.}
Hence we can take existing forward and inverse theorems in arbitrary groups for $\sup_g \mathbb{P}(X=g)$, and bootstrap them to forward and inverse results for $\mathbb{P}(X\in S)$ for $S\subset G$ a definable subset of a definable group in an o-minimal structure.

\subsection{Forward theorems for random walks in definable groups}
Ju\v{s}kevi\v{c}ius and \v{S}emetulskis \cite{OptimalGroup} considered for an arbitrary group $G$ a random product $X=\prod_{i=1}^n g_i^{\pm 1}$, where $g_i\in G$, and each exponent taken independently to be $1$ or $-1$ with probability $1/2$, obtaining the following result.
\begin{thm}[Ju\v{s}kevi\v{c}ius, \v{S}emetulskis {\cite[Corollary 1]{OptimalGroup}}]
Under this model, if every element $g_i$ has order at least $s$, then we have $$\rho \le 3\max(s^{-1},n^{-1/2}).$$
\end{thm}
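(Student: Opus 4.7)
The plan is to adapt the classical symmetric chain decomposition approach of Erdős and Kleitman to the nonabelian setting. Fix a target $g \in G$, let $N_g = |\{\epsilon \in \{\pm 1\}^n : \prod_{i=1}^n g_i^{\epsilon_i} = g\}|$, and note that the bound $\rho \le 3\max(s^{-1}, n^{-1/2})$ is equivalent to $N_g \le 3 \cdot 2^n \max(s^{-1}, n^{-1/2})$.

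First I would establish a swap lemma: if the signs $(\epsilon_j)_{j\ne i}$ are fixed, then the two possible values of $X$ differ on the left by the conjugate $P_{i-1} g_i^{\pm 2} P_{i-1}^{-1}$ of $g_i^{\pm 2}$, where $P_{i-1} = \prod_{j<i} g_j^{\epsilon_j}$; this is nontrivial whenever $g_i$ has order $>2$ and already yields the trivial bound $\rho \le 1/2$ when $s \ge 3$. I would then view $\{\pm 1\}^n$ as the Boolean lattice with the convention $-1 < 1$ and apply Dilworth's theorem to obtain a symmetric chain decomposition into $\binom{n}{\lfloor n/2 \rfloor} = O(2^n/\sqrt{n})$ chains, along each of which the products $X(\epsilon)$ trace a walk in $G$ whose consecutive steps are conjugates of the various $g_i^2$.

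The core of the argument is a per-chain bound: along any chain of length $\ell$, the number of products equal to $g$ is $O(\min(\ell, s))$. Indeed, two such coincidences separated by $k$ chain-steps correspond to a product of $k$ conjugates of elements of order $\ge s$ being the identity, and after reducing to an abelian subquotient inside the relevant centralizer one can argue $k \ge s$. Summing this bound against the chain-length profile of the symmetric decomposition, and separating the regimes $s \le \sqrt{n}$ (each chain contributes $O(s)$, giving $N_g = O(s^{-1} 2^n)$) and $s \ge \sqrt{n}$ (each chain contributes at most its length, giving $N_g = O(n^{-1/2} 2^n)$), yields $N_g = O(2^n \max(s^{-1}, n^{-1/2}))$. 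Direct case analysis for small $n$ and small $s$ would then sharpen the constant to $3$.

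The main obstacle is the per-chain bound itself. Because the conjugating factors $P_{i_k - 1}$ along a chain vary with $k$, the walk does not live in a single cyclic subgroup, and ruling out fast nonabelian cancellations of products of conjugates of order-$\ge s$ elements is the essential nonabelian input. I would approach this either by an abelianization argument that projects to a quotient where each step retains its order lower bound, or by inducting on $n$ and peeling off one step at a time while tracking how conditioning affects the distribution of the remaining walk.
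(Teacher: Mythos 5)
The paper does not prove this theorem itself --- it is imported verbatim as \cite[Corollary 1]{OptimalGroup} (Ju\v{s}kevi\v{c}ius--\v{S}emetulskis, with an earlier $GL_d(\RR)$ version in \cite{TiepVu}), so there is no internal proof to compare against. I can nonetheless assess your proposal on its own merits.

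You correctly identify the Erd\H{o}s chain-decomposition framework and the swap observation that one step along a chain multiplies $X$ on the left by a conjugate $P\,g_j^{\,2}\,P^{-1}$ of some $g_j^2$. But the argument as assembled does not close, for several reasons. First, the arithmetic in the summation is off: with a per-chain bound of $O(\min(\ell,s))$ and $\binom{n}{\lfloor n/2\rfloor}=\Theta(2^n/\sqrt n)$ chains, the regime $s\le\sqrt n$ gives $N_g = O\bigl(s\cdot 2^n/\sqrt n\bigr)$, which is $O(2^n)$ and not $O(2^n/s)$; the regime $s\ge\sqrt n$ gives $N_g\le\sum_c\ell_c = O(2^n)$, not $O(2^n/\sqrt n)$. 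What one actually needs to sum is a per-chain bound of the shape $O(\ell_c/s + 1)$, since $\sum_c(\ell_c/s+1) = O(2^n/s) + \binom{n}{\lfloor n/2\rfloor}$, but that is a different claim. Second, even that corrected per-chain claim is simply false in general: take $G=\ZZ/6$, $n=2$, $g_1=1$, $g_2=-1$ (both of order $s=6$). On the chain $(-1,-1)\to(-1,1)\to(1,1)$ one has $X_0=0$, $X_1=4$, $X_2=0$, so a chain of length $3<s$ already carries two coincidences with $g=0$. The obstruction is exactly the one you acknowledge cannot be handled --- short returns by products of conjugates of order-$\ge s$ elements --- and it is real, so any proof that isolates a uniform per-chain bound against a fixed chain decomposition must fail. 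Third, neither of your proposed rescues survives scrutiny: abelianization can collapse the order hypothesis entirely (a perfect group has trivial abelianization, and even in $\ZZ/m$ the image of $g_i^2$ may have order far below $s$), and the conjugates $P_k g_{j_k}^2 P_k^{-1}$ along a chain need not lie in any common centralizer, so there is no ``relevant abelian subquotient'' to project to. The actual Ju\v{s}kevi\v{c}ius--\v{S}emetulskis argument avoids isolating a per-chain cancellation lemma; a key device is the identity $\prod_i g_i^{\epsilon_i} = \bigl(\prod_i b_i^{\delta_i}\bigr)(g_1\cdots g_n)$ with $\delta_i=(1-\epsilon_i)/2\in\{0,1\}$ and $b_i := (g_1\cdots g_{i-1})\,g_i^{-2}\,(g_1\cdots g_{i-1})^{-1}$ fixed elements of order $\ge s/2$, which you do not have and which materially changes the combinatorics. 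So while the high-level ``chains plus per-chain counting'' intuition is in the right family, the central lemma is both mis-stated and, in its intended corrected form, false, and the surrounding bookkeeping does not yield the theorem.
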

We note that the same inequality in $GL_d(\mathbb{R})$ was obtained earlier in Tiep and Vu \cite{TiepVu} with the constant $141$ instead of $3$.
\begin{cor}
\label{cor:forward1}
Under the same assumptions, let $S\subset G$ be a definable subset of a definable group avoiding unboundedly large arithmetic progressions in an o-minimal structure $\mathbb{M}_{\mathcal{F}}$ , and let $k=\dim(S)$. Then we have
$$\rho_S = O_S(\max(s^{-1/2^k},n^{-1/2^{k+1}})).$$
If $S$ is part of a definable family of subsets of $G$, then the constant $O_S$ can taken to be uniform for elements of the family.
\end{cor}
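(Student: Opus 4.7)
My plan is to combine the Ju\v{s}kevi\v{c}ius--\v{S}emetulskis upper bound (applied blockwise after splitting the walk into $k+1$ pieces) with the bootstrapping inequality of \Cref{thm:selfirreduciblecomb} for products of $k+1$ independent random variables, using the dimension bound from \Cref{thm:ominimaldimension} to control the self-translate dimension of $S$.

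First, since $S$ avoids unboundedly large arithmetic progressions, \Cref{thm:ominimaldimension} gives that $S$ has self-translate dimension $\leq k$ of some complexity $C=C(S)$; in a definable family the constant $C$ can be taken uniform, which will yield the final uniformity statement. Next, I partition $\{1,\ldots,n\}$ into $k+1$ consecutive intervals $I_0,\ldots,I_k$, each of size $\Theta_k(n)$, and set $X_j := \prod_{i \in I_j} g_i^{\epsilon_i}$ in the natural order, so that $X = X_0 X_1 \cdots X_k$. The $X_j$'s are independent because they depend on disjoint subfamilies of the independent signs $\epsilon_i$, and each $X_j$ is itself a $\pm 1$ random product of $\Theta_k(n)$ elements still of order $\geq s$. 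The Ju\v{s}kevi\v{c}ius--\v{S}emetulskis theorem applied blockwise then gives
\[
\rho_j := \sup_g \mathbb{P}(X_j = g) \leq 3\max\!\left(s^{-1},\, \lfloor n/(k+1)\rfloor^{-1/2}\right) = O_k\!\left(\max(s^{-1},n^{-1/2})\right).
\]

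Finally, applying the second assertion of \Cref{thm:selfirreduciblecomb} to the independent decomposition $X = X_0 X_1 \cdots X_k$ yields
\[
\rho_S = \mathbb{P}(X \in S) = O_{C,k}\!\left(\max_{0 \leq j \leq k}\rho_j^{1/2^j}\right).
\]
Since each $\rho_j \leq 1$ and the exponents $1/2^j$ are decreasing in $j$, the maximum is attained at $j=k$; combined with the blockwise bound this gives
\[
\rho_S = O_{C,k}\!\left(\max(s^{-1},n^{-1/2})^{1/2^k}\right) = O_S\!\left(\max(s^{-1/2^k},\,n^{-1/2^{k+1}})\right),
\]
with the constant uniform across a definable family since $C$ is. I do not anticipate a significant obstacle; this is a direct assembly of the three earlier results. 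The only notable point is that splitting into $k+1$ independent blocks and invoking the product clause of \Cref{thm:selfirreduciblecomb} delivers the exponent $1/2^{k+1}$ on $n$, which is strictly better than what the first clause of that theorem produces when applied to $X$ as a single variable.
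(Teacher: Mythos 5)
Your proof is correct and follows essentially the same route as the paper: split the walk into an equipartition of $k+1$ contiguous blocks, apply Ju\v{s}kevi\v{c}ius--\v{S}emetulskis blockwise to bound each $\rho_i$, invoke \Cref{thm:ominimaldimension} to get self-translate dimension $\le k$ of bounded complexity, and then apply the second clause of \Cref{thm:selfirreduciblecomb}. The paper's proof is a one-line version of exactly this assembly.
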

\begin{proof}
Apply \Cref{thm:ominimaldimension} and the second part of \Cref{thm:selfirreduciblecomb} to $X_i=\prod_{j\in I_i} g_j$ for an equipartition $I_0\sqcup \ldots \sqcup I_k=\{1,\ldots,n\}$, noting that $\rho_i \le 3\max(s^{-1},\lceil n/(k+1)\rceil^{-1/2})$ for each $i$ by the above theorem.
\end{proof}
We conjecture that this bound is not tight, and make a number of concrete conjectures of increasing strength to this effect.
\begin{conjecture}
Under the same assumptions,
\begin{enumerate}[a)]
    \item There exists $\epsilon>0$ depending on $S$ such that $\rho_S= O_S(\max(s^{-1},n^{-1/2}))^{(1/2^k)+\epsilon}$
    \item There exists  $\epsilon>0$ depending on $k,G$ such that $\rho_S= O_S(\max(s^{-1},n^{-1/2}))^{(1/2^k)+\epsilon}$
    \item $\rho_S= \max(s^{-1},n^{-1/2})^{1-o_S(1)}$
    \item $\rho_S= O_S(\max(s^{-1},n^{-1/2}))$.
\end{enumerate}
\end{conjecture}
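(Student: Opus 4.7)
Since this is stated as a conjecture rather than a theorem, I give strategic approaches for each part rather than a complete plan, and flag the main obstacles. For part (a), the exponent $1/2^k$ in \Cref{cor:forward1} arises from $k$ successive halvings of the exponent in the bootstrapping step of \Cref{thm:selfirreduciblecomb}, each of which comes from a single Cauchy--Schwarz (second-moment) step peeling off one self-translate intersection $S_i\cap gS_i$. My plan would be to replace one of these halvings by a stronger Hal\'asz-type inequality at the base of the recursion, where the set $S_i\cap gS_i$ has collapsed to a $0$-dimensional piece of bounded size and the random walk still retains most of its $n$ steps. Such a base-case improvement multiplies the exponent by an additive $\epsilon$ determined by the maximum number of pieces appearing in the self-translate decomposition of $S$, giving an $\epsilon$ depending on $S$.

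For part (b), the required $\epsilon$ must depend only on $k$ and $G$. The plan is to execute the (a) argument uniformly in definable families, exploiting the uniform complexity bound in the second half of \Cref{thm:ominimaldimension}. The main obstacle is that the collection of all dimension-$k$ definable subsets of $G$ is not itself a single definable family; one would need an o-minimal stratification argument reducing to finitely many definable families indexed by the combinatorial type of the self-translate decomposition.

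For part (c), which is the direct analogue of the forward bound in \Cref{thm:FKS}, the natural plan is to imitate the proof strategy in \cite{fox2021geometric}: combine the inverse bootstrapping theorem (now available through \Cref{thm:selfirreduciblecomb} together with \Cref{thm:trajectory,thm:generalominimal,thm:ominimaldimension}) with (i) a structural theorem of Pila-type, asserting that a definable $S\subset G$ avoiding unboundedly large arithmetic progressions intersects any bounded-rank coset nilprogression $P$ in at most $|P|^{1-\delta}$ elements, and (ii) an analogue of Kane's progress on the Gotsman--Linial conjecture adapted to products in $G$. The hard part will be (i): Pila's theorem is specific to $(\mathbb{R}^d,+)$, and extending it to a non-abelian definable group would itself be a significant result. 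A plausible route is to pull the intersection $S\cap P$ back through a local exponential chart into the Lie algebra $\mathfrak{g}$, provided the chart is definable (e.g.\ in $\mathbb{R}_{\mathrm{an,exp}}$), and then apply Pila--Wilkie counting there; the non-definability of $\exp_G$ alluded to in the discussion after \Cref{thm:trajectory} is a serious obstruction to this.

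Part (d) removes even the $o_S(1)$ loss from the exponent. The analogous statement in $(\mathbb{R}^d,+)$ is also open, and I do not see a plausible approach without a substantial new idea beyond the framework of this paper.
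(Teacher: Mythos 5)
This statement is a conjecture, and the paper does not prove it; the paper only remarks (citing back to the discussion surrounding \Cref{thm:FKS}) that the analogue of part (c) for $G=(\mathbb{R}^d,+)$ is established in \cite{fox2021geometric}, and that the analogue of part (d) for $G=(\mathbb{R}^d,+)$ and $k=1$ follows from \cite[Theorem 1.9]{fox2021geometric}. Your proposal correctly recognizes the conjectural status, and your identification of the obstacles to proving part (c) in general $G$ — namely that the argument in \cite{fox2021geometric} relies on Pila's theorem and Kane's progress on Gotsman--Linial, both of which are specific to $(\mathbb{R}^d,+)$ — is precisely the reason the paper itself gives for why it cannot generalize the stronger forward bound beyond the euclidean case. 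Your observation that the non-definability of $\exp_G$ obstructs pulling the problem back to the Lie algebra is also consistent with the paper's discussion following \Cref{thm:trajectory}. Since there is no proof in the paper to compare against, your discussion of strategy and obstacles is appropriate; the only thing to flag is that none of your sketched routes for (a)--(d) constitutes an actual proof, which you already acknowledge.
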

As was mentioned earlier, for $G=(\mathbb{R}^d,+)$ the analogue of c) was established in \cite{fox2021geometric} (see \Cref{thm:FKS} above). Also the analogue of d) for $G=(\mathbb{R}^d,+)$ and $k=1$ readily follows from \cite[Theorem 1.9]{fox2021geometric}, the analogous result for $S$ a strictly convex plane curve, by decomposing a generic projection of $S$ to $\mathbb{R}^2$ into convex arcs (which is possible by standard results on definable cell decompositions, see \cite{vdD98}).

Nguyen \cite{nguyen2017anticoncentration} considered a more general product $X=\prod_{i=1}^ng_i$ in an arbitrary group $G$, where $g_i$ are selected from arbitrary distributions $\mu_i$, and obtained the following result.
\begin{thm}[Nguyen, {\cite[Theorem 1.5]{nguyen2017anticoncentration}}]
For any $\delta>0$ there exist $n_0$ and $0<\epsilon<1$ such that the following holds for $n\ge n_0$. Assume that $p_0\ge n^{-\epsilon}$ and each $\operatorname{supp}(\mu_i)$ contains a pair of elements $a_i,a_i'$ such that $a_ia_i'^{-1}$ has order at least $s$. Then
$$\rho = O_{\delta}(\max(s^{-1},n^{-1/2+\delta})).$$
\end{thm}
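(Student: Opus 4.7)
The plan is to prove Nguyen's theorem by a two-step coupling-and-concentration argument: first decompose each $\mu_i$ as a mixture with a two-point distribution component, extract the resulting long Bernoulli subwalk, then apply a concentration bound in the spirit of Ju\v{s}kevi\v{c}ius--\v{S}emetulskis. Specifically, the hypotheses $p_0\ge n^{-\epsilon}$ together with $a_i,a_i'\in\operatorname{supp}(\mu_i)$ with $a_ia_i'^{-1}$ of order at least $s$ permit writing each $\mu_i$ as a convex combination $\mu_i = 2n^{-\epsilon}\tau_i + (1-2n^{-\epsilon})\tilde\mu_i$, where $\tau_i$ is the uniform measure on $\{a_i,a_i'\}$ and $\tilde\mu_i$ is a residual probability measure. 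Writing $B_i\in\{0,1\}$ for the indicator that $g_i$ was sampled from the $\tau_i$ component, we obtain a decomposition of $X$ into a Bernoulli subwalk indexed by $I=\{i:B_i=1\}$ interleaved with deterministic filler coming from conditioning on the non-Bernoulli part.

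I would first apply a Chernoff bound to $\sum_{i=1}^n B_i$ to deduce $L:=|I|\ge n^{1-\epsilon}$ except with probability $\exp(-\Omega(n^{1-2\epsilon}))$, which is negligible for $\epsilon<1/2$. Conditioning on $I$ and on the values $\{g_i:i\notin I\}$, the conditional law of $X$ takes the form $X=c_0 y_1 c_1 y_2 c_2 \cdots y_L c_L$, where the $c_j$ are deterministic elements of $G$ and the $y_j$ are independent with $y_j$ uniform on $\{a_{i_j},a_{i_j}'\}$. Absorbing the $c_j$ by successive conjugation rewrites this as $X = A\cdot \prod_{j=1}^L h_j^{\beta_j}$, where $A$ is deterministic, the $\beta_j\in\{0,1\}$ are independent uniform, and each $h_j$ is a conjugate of $a_{i_j}^{-1}a_{i_j}'$ and hence of order at least $s$.

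At this stage the conditional concentration of $X$ is governed by an indicator-Bernoulli random walk of length $L\ge n^{1-\epsilon}$ whose factors have order at least $s$, for which the Ju\v{s}kevi\v{c}ius--\v{S}emetulskis argument yields a bound $O(\max(s^{-1},L^{-1/2}))=O(\max(s^{-1},n^{-(1-\epsilon)/2}))$. Combining with the negligible Chernoff tail and choosing $\epsilon\le 2\delta$ converts the second term to $n^{-1/2+\delta}$ and produces $\rho = O_\delta(\max(s^{-1},n^{-1/2+\delta}))$ once $n_0$ is large enough that the exceptional contribution is absorbed.

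The main obstacle will be the minor model mismatch between the $\{e,h_j\}$-walk produced by the reduction and the $\prod g_j^{\pm 1}$ walk handled directly by Ju\v{s}kevi\v{c}ius--\v{S}emetulskis. The two models have identical concentration behavior up to constants, since the key ingredient --- at each step the random factor takes one of two distinct values differing by an element of order at least $s$ --- is identical; one either reduces the $\{e,h_j\}$-model to the $\pm 1$-model by splitting each $h_j$ into two square-root factors (when available) and absorbing one of them into the deterministic prefix $A$, or --- more robustly --- re-runs the short Ju\v{s}kevi\v{c}ius--\v{S}emetulskis argument in the asymmetric setting, which carries through essentially verbatim. A secondary technical point is that the Chernoff tail, the conditioning on $(I,\{g_i:i\notin I\})$, and the conditional concentration estimate must be combined via a careful tower of conditioning and union bound; this is routine provided one is explicit about the order in which the sources of randomness are revealed.
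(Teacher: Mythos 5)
This theorem is not proved in the paper: it is quoted from Nguyen \cite{nguyen2017anticoncentration} as a black-box input (used to derive the paper's Corollary on $\rho_S$). There is therefore no in-paper argument to compare your proposal against, so I will assess it on its own terms.

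Your high-level plan --- the mixture decomposition $\mu_i = 2n^{-\epsilon}\tau_i + (1-2n^{-\epsilon})\tilde\mu_i$ with $\tau_i$ uniform on $\{a_i,a_i'\}$ (legitimate given $p_0\ge n^{-\epsilon}$), a Chernoff bound guaranteeing $L \ge n^{1-\epsilon}$ Bernoulli factors with superpolynomially small failure probability, conditioning on the remainder, sliding the deterministic fillers out by iterated conjugation (which preserves the orders of the step ratios), invoking a two-point Littlewood--Offord bound for the resulting walk, and choosing $\epsilon\le 2\delta$ --- is reasonable, and the exponent bookkeeping works out. However, the gap you flag is more serious than ``minor,'' and your first proposed fix is not a fix. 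After all conjugations you land in a $\{\operatorname{id}_G,h_j\}$-Bernoulli walk in which each step ratio $h_j$ is conjugate to $a_{i_j}a_{i_j}'^{-1}$ and hence has order $\ge s$, whereas the quoted Ju\v{s}kevi\v{c}ius--\v{S}emetulskis statement is only for $\prod g_i^{\pm 1}$. In an arbitrary group an element of order $\ge s$ need not have a square root at all, so ``split each $h_j$ into two square-root factors'' cannot be a step in a general proof. (The straightforward reduction actually goes the other direction: a $\pm 1$ walk with step $g$ reduces, after one further conjugation, to an $\{\operatorname{id}_G,h\}$ walk with $h=g^2$ --- unhelpful here.) So you are genuinely committed to re-proving the asymmetric two-point analogue of the J--\v{S} bound. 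This is plausible --- their combinatorial engine only uses that flipping a coordinate of the cube multiplies the product by a conjugate of the step ratio, and in your setting those ratios have order $\ge s$, even a factor of $2$ cleaner than in the $\pm 1$ model where one meets $g_i^2$ of order possibly $s/2$ --- but as written it is an unverified claim about the internals of someone else's proof, not an argument. To close the gap, either include a short self-contained derivation of the $\{\operatorname{id}_G,h\}$-model estimate, or cite a formulation that is stated for arbitrary two-point step distributions rather than only for $\{g,g^{-1}\}$.
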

\begin{cor}
\label{cor:forward2}
Under the same assumptions, let $S\subset G$ be a definable subset of a definable group avoiding unboundedly large arithmetic progressions in an o-minimal structure $\mathbb{M}_{\mathcal{F}}$ , and let $k=\dim(S)$. Then we have
$$\rho_S = O_{S,\delta}(\max(s^{-1/2^k},n^{-1/2^{k+1}+\delta})).$$
If $S$ is part of a definable family of subsets of $G$, then the constant $O_S$ can taken to be uniform for elements of the family.
\end{cor}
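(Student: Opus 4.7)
The plan is to mimic the proof of \Cref{cor:forward1} with Ju\v{s}kevi\v{c}ius--\v{S}emetulskis replaced by Nguyen's theorem. First, since $S$ is definable and avoids unboundedly large arithmetic progressions, \Cref{thm:ominimaldimension} supplies a complexity constant $C=C(S)$ such that $S$ has self-translate dimension $\le k=\dim(S)$ of complexity $C$, uniformly across a definable family. This puts us in position to apply the second (multi-factor) bound of \Cref{thm:selfirreduciblecomb}.

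Next, I would equipartition $\{1,\ldots,n\}=I_0\sqcup\cdots\sqcup I_k$ with $|I_i|=\lceil n/(k+1)\rceil$ or $\lfloor n/(k+1)\rfloor$, and set $X_i=\prod_{j\in I_i}g_j$. These $X_0,\ldots,X_k$ are independent $G$-valued random variables, so \Cref{thm:selfirreduciblecomb} gives
$$\rho_S=\mathbb{P}\Bigl(\prod_{i=0}^k X_i\in S\Bigr)=O_{C,k}\Bigl(\max_i \rho_i^{1/2^i}\Bigr),$$
where $\rho_i=\sup_g\mathbb{P}(X_i=g)$. Because each $\rho_i\le 1$, the right-hand side is bounded by $R^{1/2^k}$ where $R=\max_i\rho_i$.

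To bound each $\rho_i$ I would invoke Nguyen's theorem to the $|I_i|$-step walk $X_i$. The hypotheses transfer directly: the pair $a_j,a_j'\in\operatorname{supp}(\mu_j)$ with $a_ja_j'^{-1}$ of order $\ge s$ is unchanged, and for the minimum-probability hypothesis one notes that the relevant threshold $\epsilon'=\epsilon'(\delta)$ in Nguyen's theorem can be chosen small enough that $p_0\ge n^{-\epsilon}$ implies $p_0\ge|I_i|^{-\epsilon'}$ for $n$ sufficiently large (absorbing the factor $(k+1)^{\epsilon'}$ into the implicit constant for small $n$). This yields
$$\rho_i=O_{\delta}\bigl(\max(s^{-1},|I_i|^{-1/2+\delta})\bigr)=O_{\delta,k}\bigl(\max(s^{-1},n^{-1/2+\delta})\bigr).$$
Raising to the power $1/2^k$ and absorbing the modified $\delta$ into the statement yields the claimed bound
$$\rho_S=O_{S,\delta}\bigl(\max(s^{-1/2^k},n^{-1/2^{k+1}+\delta})\bigr).$$

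There is no real obstacle: the combinatorial work is already done in \Cref{thm:selfirreduciblecomb} and the geometric/model-theoretic work in \Cref{thm:ominimaldimension}. The only care points are verifying the transfer of Nguyen's hypotheses to the subwalks (in particular that the allowed $\epsilon$ degrades only by a constant factor, which may be absorbed by shrinking the ambient $\epsilon$) and noting that the uniformity in definable families follows because the complexity $C$ from \Cref{thm:ominimaldimension}, and hence the implicit constant from \Cref{thm:selfirreduciblecomb}, depend only on the family.
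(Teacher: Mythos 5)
Your proposal matches the paper's proof essentially exactly: apply \Cref{thm:ominimaldimension} to bound the self-translate dimension and complexity, equipartition the steps into $k+1$ blocks, bound each block's concentration via Nguyen's theorem (with $\delta$ rescaled by $2^k$), and feed these into the second part of \Cref{thm:selfirreduciblecomb}. One quibble: your aside that the Nguyen threshold $\epsilon'$ can be chosen ``small enough'' to transfer $p_0\ge n^{-\epsilon}$ to the sub-walks has the inequality running the wrong way (one instead needs the corollary's $\epsilon$ to be small relative to $\epsilon'(\delta)$), but the paper glosses over this same bookkeeping and the conclusion is unaffected.
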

\begin{proof}
Apply \Cref{thm:ominimaldimension} and the second part of \Cref{thm:selfirreduciblecomb} to $X_i=\prod_{j\in I_i} g_j$ for an equipartition $I_0\sqcup \ldots \sqcup I_k=\{1,\ldots,n\}$, noting that the above theorem with $2^k\delta$ in place of $\delta$ gives $\rho_i \le O_{\delta}(\max(s^{-1/2},n^{-1/2+2^k\delta})$ for each $i$.
\end{proof}
\subsection{Inverse theorem for random walks in definable groups}
For general groups $G$, there is a robust theory of ``approximate groups'' developed by Breuillard, Green, and Tao \cite{ApproximateGroup} of subsets $N\subset G$ such that $N+N$ can be covered by a bounded number of cosets of $N$. The key notion is a ``coset nilprogression in $C$-normal form'', whose precise definition we will not repeat here.
Nguyen \cite{nguyen2017anticoncentration} proved the following result for random walks in arbitrary groups $G$ (we also refer to \cite{nguyen2017anticoncentration} for the definition of a coset nilprogression in $C$-normal form).
\begin{thm}[Nguyen, {\cite[Theorem 1.10]{nguyen2017anticoncentration}}]
Let $A>0$ and $0<\epsilon<1$ be constants, and assume that $p_0 \ge n^{-\epsilon^3}$ and
$$\rho \ge n^{-A}.$$
Then there is a coset nilprogression $HP$ with the following properties.
\begin{enumerate}
    \item $P$ is in $C$-normal form with $C=O_{A,\epsilon}(1)$ with rank and step $r,s=O_{A,\epsilon}(1)$.
    \item $|HP|=O_{A,\epsilon}(\rho^{-1})$
    \item There is a finite set $Y$ of cardinality $|Y|=O_{A,\epsilon}(1)$ and consecutive indices $i_0,\ldots,i_0+n'$ with $n'=n^{1-O_A(\epsilon)}$ such that the following holds: for each $a,a'\in \operatorname{supp}(\mu_i)$, $i_0\le i \le i_0+n'$ there exists a permutation $\sigma$ of $Y$ such that for all $x\in Y$ we have
    $$aa'^{-1}\in xHP(\sigma(x))^{-1}.$$
\end{enumerate}
\end{thm}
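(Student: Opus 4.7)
The plan is to follow the non-abelian Littlewood--Offord template pioneered by Tao--Vu and extended by Breuillard--Green--Tao, adapted to general step distributions $\mu_i$ rather than purely binary ones. First I would use dyadic pigeonholing together with the hypothesis $p_0 \ge n^{-\epsilon^3}$ to find a consecutive interval of indices $\{i_0,\ldots,i_0+n'\}$ with $n' = n^{1-O(\epsilon)}$ on which the supports $\operatorname{supp}(\mu_i)$ have comparable sizes and on which each $\mu_i$ is close to uniform on its support up to a factor $n^{O(\epsilon)}$. Freezing the steps outside this interval by conditioning changes the concentration $\rho$ by at most a polynomial-in-$n$ factor, so the conditional walk on the interval still satisfies $\rho' \ge n^{-O(A)}$.

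Next, using a non-commutative Plünnecke--Ruzsa inequality together with a non-abelian Hal\'asz-type calculation, I would translate the concentration bound $\rho' \ge n^{-O(A)}$ into a lower bound on the multiplicative energy of the ``two-step difference'' set $D_0 := \bigcup_{i_0 \le i \le i_0+n'}\{a(a')^{-1} : a, a' \in \operatorname{supp}(\mu_i)\}$. Applying Tao's non-abelian Balog--Szemer\'edi--Gowers lemma to the resulting high-energy set then extracts a subset $D \subset D_0$ of size $|D| = \Omega_{A,\epsilon}(\rho^{-1})$ with controlled doubling $|D \cdot D| \le K|D|$ for some $K = O_{A,\epsilon}(1)$.

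I would then invoke the Breuillard--Green--Tao classification theorem for approximate groups to deduce that $D$ is covered by $O_{A,\epsilon}(1)$ left translates of a coset nilprogression $HP$ in $C$-normal form of bounded rank and step, with $|HP| = O_{A,\epsilon}(\rho^{-1})$. The finite set $Y$ of cardinality $O_{A,\epsilon}(1)$ is produced from the coset representatives, and for each $i$ the permutation $\sigma$ records how the difference $a(a')^{-1}$ sits relative to these representatives inside the coset structure of $HP$, recovering the conclusion $a(a')^{-1} \in xHP(\sigma(x))^{-1}$.

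The main obstacle, and the true content of Nguyen's argument, is the non-abelian Hal\'asz step: in the abelian setting Hal\'asz's inequality directly converts concentration into additive energy via Fourier analysis, but Fourier methods are unavailable in general groups, so one must instead run a more delicate ``tensor power'' argument tracking products of many independent copies of the walk. Keeping careful account of the $p_0$ hypothesis is essential here to ensure that the reduction from general $\mu_i$ to effectively binary distributions does not blow up the final bound beyond $|HP| = O_{A,\epsilon}(\rho^{-1})$, and matching the sharp exponent $1$ (rather than some larger polynomial) in this bound will require an inductive refinement rather than a single application of BSG and BGT.
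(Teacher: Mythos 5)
This statement is not proved in the paper at all: it is an imported black-box result, cited verbatim as {\cite[Theorem~1.10]{nguyen2017anticoncentration}}, and its only role here is as an input to \Cref{cor:inverse}, where the author bootstraps it via \Cref{thm:ominimaldimension} and \Cref{thm:selfirreduciblecomb}. There is therefore no ``paper's own proof'' for your sketch to be compared against, and nothing in this paper's arguments (which concern o-minimal structure theory, self-translate dimension, and the decoupling inequality of \Cref{lem:decouple}) overlaps with what you would need.

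Taking your sketch on its own terms: the broad architecture you describe --- pigeonhole to a long contiguous interval using $p_0\ge n^{-\epsilon^3}$, pass from concentration to a small-doubling set of pairwise quotients $\{a(a')^{-1}\}$, apply a non-abelian Balog--Szemer\'edi--Gowers step, then invoke the Breuillard--Green--Tao classification of approximate groups to land in a coset nilprogression in $C$-normal form --- is the right high-level shape for Nguyen's argument and also for the earlier Tao--Vu inverse theorems it generalizes. You have correctly identified that the delicate point is the non-Fourier replacement for Hal\'asz-type energy estimates. However, this paper cannot corroborate the details (in particular the precise role of $p_0$ in controlling the reduction to an effectively binary model, and where the exponent in $|HP|=O_{A,\epsilon}(\rho^{-1})$ rather than a weaker power comes from), so to actually verify your proposal you would need to work against Nguyen's original paper, not this one.
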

\begin{cor}
\label{cor:inverse}
Let $A>0$ and $0<\epsilon<1$ be constants, and assume that $p_0 \ge n^{-\epsilon^3}$. Let $S\subset G$ be a definable subset of a definable group avoiding unboundedly large arithmetic progressions in an o-minimal structure $\mathbb{M}_{\mathcal{F}}$, and let $k=\dim(S)$. Suppose that
$$\rho_S \ge n^{-A}.$$ Then there is a constant $n_S$ depending only on $S$, such that for $n\ge n_S$, there is a coset nilprogresison $HP$ with the following properties.
\begin{enumerate}
    \item $P$ is in $C$-normal form with $C=O_{A,\epsilon,k}(1)$ with rank and step $r,s=O_{A,\epsilon,k}(1)$.
    \item $|HP|=O_{A,\epsilon,S}(p_0^{-1}\rho_S^{-(2^{k+1}-1)})$
    \item There is a finite set $Y$ of cardinality $|Y|=O_{A,\epsilon,k}(1)$ and consecutive indices $i_0,\ldots,i_0+n'$ with $n'=n^{1-O_{A,k}(\epsilon)}$ such that the following holds: for each $a,a'\in \operatorname{supp}(\mu_i)$, $i_0\le i \le i_0+n'$ there exists a permutation $\sigma$ of $Y$ such that for all $x\in Y$ we have
    $$aa'^{-1}\in xHP(\sigma(x))^{-1}.$$
\end{enumerate}
If $S$ is part of a definable family of subsets of $G$, then the dependencies of the constants on $S$ can taken to be uniform for elements of the family.
\end{cor}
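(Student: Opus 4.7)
My plan is to reduce the corollary to Nguyen's inverse theorem (stated just above) by using the combinatorial machinery already developed. The key is that \Cref{thm:ominimaldimension} and \Cref{thm:selfirreduciblecomb} together convert a lower bound on $\rho_S$ into a lower bound on the pointwise concentration $\rho$, at which point Nguyen's theorem applies as a black box.

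First, since $S$ avoids unboundedly large arithmetic progressions, \Cref{thm:ominimaldimension} yields $\dimself(S) \le \dim(S) = k$ with some complexity $C$ depending only on $S$ (and uniform within a definable family). Feeding this into \Cref{thm:selfirreduciblecomb} gives
$$\rho_S \le O_{C,k}\!\left(p_0^{-1}\,\rho^{1/(2^{k+1}-1)}\right),$$
which rearranges to a lower bound $\rho \ge c_{C,k}(p_0 \rho_S)^{2^{k+1}-1}$ for some positive constant $c_{C,k}$. Combining with the hypotheses $\rho_S \ge n^{-A}$ and $p_0 \ge n^{-\epsilon^3}$, this yields $\rho \ge c_{C,k}\, n^{-(A+\epsilon^3)(2^{k+1}-1)}$. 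Taking $n_S$ large enough (depending only on $C$, hence only on $S$) that $c_{C,k}$ can be absorbed into the exponent, we obtain $\rho \ge n^{-A''}$ for some $A'' = A''(A,\epsilon,k)$ whenever $n \ge n_S$.

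Now I would apply Nguyen's inverse theorem with the parameters $A''$ and $\epsilon$. The hypothesis $p_0 \ge n^{-\epsilon^3}$ is unchanged and still holds. The coset nilprogression $HP$ produced has $P$ in $C'$-normal form with $C'$, rank, and step depending only on $A''$ and $\epsilon$, hence only on $A, \epsilon, k$, as required. Nguyen's size bound $|HP| = O_{A'',\epsilon}(\rho^{-1})$, combined with the lower bound $\rho \ge \Omega_{C,k}((p_0 \rho_S)^{2^{k+1}-1})$, yields the claimed bound on $|HP|$ in terms of $p_0$ and $\rho_S$. The set $Y$ and consecutive indices $i_0,\ldots,i_0+n'$ transfer directly, with $n' = n^{1 - O_{A''}(\epsilon)} = n^{1 - O_{A,k}(\epsilon)}$.

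There is no real obstacle here, only bookkeeping: one must carefully track how the $S$-dependent constant $c_{C,k}$ gets absorbed into the threshold $n_S$ so that the nilprogression parameters depend only on $A,\epsilon,k$ (with $S$ entering only through $n_S$ and through the size bound via $\rho_S$ itself). The definable-family uniformity is immediate from the uniform complexity bound at the end of \Cref{thm:ominimaldimension}: the complexity $C$ and hence $n_S$ and all output constants can be chosen uniformly in the family.
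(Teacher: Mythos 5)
Your proposal is correct and takes essentially the same route as the paper: both use \Cref{thm:ominimaldimension} to get $\dimself(S)\le k$ with complexity $C=O_S(1)$ (uniform in definable families), feed this into the first part of \Cref{thm:selfirreduciblecomb} to turn $\rho_S\ge n^{-A}$ into $\rho\ge n^{-A''}$ with $A''=A''(A,\epsilon,k)$ once $n\ge n_S$ (absorbing the $S$-dependent constant into the threshold $n_S$), and then invoke Nguyen's inverse theorem as a black box with $A''$ in place of $A$.

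One minor remark on the $|HP|$ bound: the inequality $\rho\ge \Omega_{C,k}\bigl((p_0\rho_S)^{2^{k+1}-1}\bigr)$ you derive, combined with Nguyen's $|HP|=O(\rho^{-1})$, naturally yields $|HP|=O_{A,\epsilon,S}\bigl(p_0^{-(2^{k+1}-1)}\rho_S^{-(2^{k+1}-1)}\bigr)$, with exponent $2^{k+1}-1$ on $p_0^{-1}$ rather than the exponent $1$ stated in item (2). The paper's own proof (which bounds $p_0\ge n^{-1}$ before applying Nguyen) delivers the even coarser $n^{2^{k+1}-1}\rho_S^{-(2^{k+1}-1)}$, so this is evidently a slip in the corollary's statement rather than a defect in your argument; just be aware you should not write ``yields the claimed bound'' without noting the exponent that actually comes out.
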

\begin{proof}
Applying \Cref{thm:ominimaldimension} and the first part of \Cref{thm:selfirreduciblecomb}, we get $\rho\ge (\rho_Sn^{-1})^{(2^{k+1}-1)}C_S^{-1}$, so we can apply the above theorem with  $(2^{k+1}-1)(A+1)+1$ in place of $A$ (the last $+1$ is used to absorb $C_S$ in the condition $n \ge n_S$).
\end{proof}

\section{Proof of \Cref{thm:selfirreduciblecomb}}
In this section, we prove \Cref{thm:selfirreduciblecomb}. We first begin with some preliminary results.
\label{sec:ProofFKS}

\begin{lem}
\label{rem:equivdef}
Suppose that $S$ has self-translate dimension $\le k$ of complexity $C$. Then $g_1Sg_2$ also has self-translate dimension $\le k$ of complexity $C$ for all $g_1,g_2\in G$. In particular, the condition $S_i\cap gS_i$ having self-translate dimension $\le k-1$ of complexity $C$ for all $\operatorname{id}_G\ne g \in G$ in \Cref{defn:selftranslate} implies the seemingly more general condition that $gS_i \cap g'S_i$ has self-translate dimension $\le k-1$ of complexity $C$ for all $g\ne g'$. 
\end{lem}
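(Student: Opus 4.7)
The plan is to prove the first statement by induction on $k$ and then deduce the ``in particular'' clause as a direct consequence by rewriting $gS_i \cap g'S_i$ as a left-translate of an intersection of the form $S_i \cap \tilde{g} S_i$.

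For the base case $k=0$, the claim is trivial: if $|S|\le C$ then $|g_1 S g_2|\le C$, since left and right multiplication by group elements are bijections. For the inductive step, suppose $S$ has self-translate dimension $\le k$ of complexity $C$, witnessed by a decomposition $S = S_1 \cup \cdots \cup S_C$ with each $S_i \cap g S_i$ having self-translate dimension $\le k-1$ of complexity $C$ for all $\operatorname{id}_G \ne g \in G$. Then I take the decomposition $g_1 S g_2 = (g_1 S_1 g_2) \cup \cdots \cup (g_1 S_C g_2)$. The key identity is
$$(g_1 S_i g_2) \cap g(g_1 S_i g_2) = g_1\bigl(S_i \cap (g_1^{-1} g g_1) S_i\bigr) g_2,$$
and since $g \ne \operatorname{id}_G$ implies $g_1^{-1} g g_1 \ne \operatorname{id}_G$, the inner intersection has self-translate dimension $\le k-1$ of complexity $C$ by hypothesis. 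Applying the inductive hypothesis (with a smaller $k$) to this inner intersection shows that its conjugate by $g_1,g_2$ also has self-translate dimension $\le k-1$ of complexity $C$, which is exactly what is required.

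For the ``in particular'' clause, given $g \ne g'$ in $G$, I simply write
$$g S_i \cap g' S_i = g\bigl(S_i \cap (g^{-1} g') S_i\bigr),$$
and since $g^{-1} g' \ne \operatorname{id}_G$, the hypothesis gives that $S_i \cap (g^{-1}g') S_i$ has self-translate dimension $\le k-1$ of complexity $C$; applying the first part of the lemma (with $g_1 = g$, $g_2 = \operatorname{id}_G$) then transfers this bound to $gS_i \cap g'S_i$.

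The proof is essentially bookkeeping; there is no real obstacle beyond correctly unwinding the definition and exploiting the fact that conjugation sends non-identity elements to non-identity elements. The only thing to be careful about is to apply the inductive hypothesis to the correct statement (invariance under two-sided translation at level $k-1$) before using it to handle the intersections appearing in the decomposition at level $k$.
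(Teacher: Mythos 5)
Your proof is correct and is essentially the same argument as the paper's, just with the induction made explicit: the paper writes $g_1Sg_2=(g_1Sg_1^{-1})g_1g_2$ and observes that self-translate dimension is stable under conjugation (an automorphism) and under right multiplication (which commutes with the left-translations in the definition), which is an abstract phrasing of exactly the identity $(g_1S_ig_2)\cap g(g_1S_ig_2)=g_1\bigl(S_i\cap(g_1^{-1}gg_1)S_i\bigr)g_2$ that you verify and run the induction on. The ``in particular'' step you give, rewriting $gS_i\cap g'S_i = g\bigl(S_i\cap(g^{-1}g')S_i\bigr)$, is how the paper intends that clause to be deduced as well.
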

\begin{proof}
Indeed, we can write $g_1Sg_2=(g_1Sg_1^{-1})g_1g_2$, which reduces to showing that this notion is stable under conjugation and right multiplication. The former is clear since conjugation is an automorphism of $G$, and the latter follows because right multiplication commutes with the left-multiplication of elements of $G$ in \Cref{defn:selftranslate}.
\end{proof}

\begin{lem}[Fox, Kwan, S. {\cite[Theorem 3.1]{fox2021geometric}}]\label{lem:decouple}
Given an event $E(Y,Z)$ which depends on the outcomes of independent random variables $Y,Z$, setting $\sup_y\mathbb{P}(Y=y) =\mu$ and $\sup_{y\ne y'}\mathbb{P}(E(y,Z)\wedge E(y',Z))=\lambda$, we have
$$\mathbb{P}(E(Y,Z))\le \lambda^{1/2}+2\mu.$$
\end{lem}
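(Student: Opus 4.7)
The plan is to run a standard second-moment / decoupling argument. I first set $f(z) := \mathbb{P}_Y(E(Y, z))$ and $p := \mathbb{P}(E(Y, Z))$, so that by Fubini $p = \mathbb{E}_Z[f(Z)]$. Introducing an independent copy $Y'$ of $Y$ and using independence of $Y, Y', Z$, the paired event satisfies
$$\mathbb{E}_Z[f(Z)^2] \;=\; \mathbb{P}\bigl(E(Y, Z) \wedge E(Y', Z)\bigr).$$

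The main step is to bound this second moment from above by decomposing according to whether $Y = Y'$ or $Y \ne Y'$. I would expand the off-diagonal part as $\sum_{y \ne y'} \mathbb{P}(Y=y)\mathbb{P}(Y'=y')\mathbb{P}\bigl(E(y, Z) \wedge E(y', Z)\bigr)$ and apply the definition of $\lambda$ to bound it by $\lambda$. For the diagonal part, I would write it as $\sum_y \mathbb{P}(Y=y)^2 \mathbb{P}(E(y, Z))$ and pull out one factor of $\mu$ using $\mathbb{P}(Y=y) \leq \mu$, leaving at most $\mu \cdot \mathbb{E}_Z[f(Z)] = \mu p$. Combined with Jensen / Cauchy--Schwarz $p^2 = (\mathbb{E}_Z[f(Z)])^2 \leq \mathbb{E}_Z[f(Z)^2]$, this yields the quadratic
$$p^2 \;\leq\; \mu p + \lambda.$$

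To finish, I would solve this quadratic in $p$; using $\sqrt{\mu^2 + 4\lambda} \leq \mu + 2\sqrt{\lambda}$ gives the (slightly stronger) bound $p \leq \mu + \sqrt{\lambda}$, from which the claim $p \leq \lambda^{1/2} + 2\mu$ is immediate. I do not anticipate any real obstacle here: this is a routine decoupling/second-moment calculation, and the only care required is to correctly separate the $Y = Y'$ contribution (which produces the linear $\mu p$ term) from the $Y \ne Y'$ contribution (which produces the $\lambda$ term). The factor $2\mu$ rather than the tighter $\mu$ in the statement is presumably just a convenient choice of constants for the applications in \Cref{sec:ProofFKS}.
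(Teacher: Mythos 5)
Your argument is correct, and it is the standard second-moment/decoupling proof one would expect here. Note that this paper does not actually prove \Cref{lem:decouple} -- it cites it from \cite[Theorem 3.1]{fox2021geometric} -- so there is no in-paper proof to compare against, but your derivation (Fubini to get $p = \mathbb{E}_Z[f(Z)]$, expand $\mathbb{E}_Z[f(Z)^2]$ via an independent copy $Y'$, split on $\{Y = Y'\}$ vs.\ $\{Y \ne Y'\}$, bound the off-diagonal by $\lambda$ and the diagonal by $\mu p$, then apply Cauchy--Schwarz and solve $p^2 \le \mu p + \lambda$) is exactly the intended route, and indeed yields the slightly sharper $p \le \mu + \sqrt{\lambda}$, from which $\lambda^{1/2} + 2\mu$ follows a fortiori. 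The only cosmetic caveat: your sums over $y$, $y'$ presuppose that $Y$ is discrete. This covers every use in the paper (the $X_i$ are finite products of discretely distributed $g_j$), and the argument extends verbatim to the general case by conditioning on $(Y,Y')$ instead of summing over atoms, so it is not a genuine gap.
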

\begin{lem}\label{lem:partite}
For $0<\lambda_0,\ldots,\lambda_k\le p_0$ such that $\rho\le \prod_{i=0}^k \lambda_i$, there exists a partition $I_0\sqcup \ldots \sqcup I_k$ of $\{1,\ldots,n\}$ into contiguous intervals so that setting $X_i=\prod_{j\in I_i}g_j$, we have $\rho_i:=\sup_g\mathbb{P} (X_i=g)\le p_0^{-1}\lambda_i$ for $0\le i \le k$. In particular, if $\rho^{1/(2^{k+1}-1)}p_0^{-1} \le 1$ then we can take $\rho_i \le \rho^{2^i/(2^{k+1}-1)}p_0^{-1}$.
\end{lem}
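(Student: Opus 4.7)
My plan is to proceed by induction on $k$, building the intervals greedily from left to right. The base case $k=0$ is immediate: set $I_0 = \{1,\ldots,n\}$, so $\rho_0 = \rho \le \lambda_0 \le p_0^{-1}\lambda_0$. The inductive step will rely on two elementary facts about the concentration function $\rho(a,b) := \sup_g \mathbb{P}(\prod_{j=a}^b g_j = g)$. First, for independent $G$-valued $Y, Z$, choosing $g$ to be the product of individual argmaxes gives $\sup_g \mathbb{P}(YZ=g) \ge \sup_y \mathbb{P}(Y=y) \cdot \sup_z \mathbb{P}(Z=z)$. Second, extending an interval by one step can decrease $\rho$ by at most a factor of $p_0$, i.e., $\rho(a, b+1) \ge p_0\,\rho(a,b)$, since a peak of $\prod_{j=a}^b g_j$ paired with a most-likely value of $g_{b+1}$ (which has probability $\ge p_0$) gives a candidate for $\prod_{j=a}^{b+1} g_j$.

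Armed with these, I would let $a_1$ be the smallest index for which $\rho(1, a_1) \le p_0^{-1}\lambda_0$; this exists because $\rho(1,n) = \rho \le \prod_i \lambda_i \le \lambda_0 \le p_0^{-1}\lambda_0$, using $\lambda_i \le p_0 \le 1$ for $i \ge 1$. Set $I_0 = \{1,\ldots,a_1\}$. If $a_1 > 1$, minimality combined with the ``factor $p_0$'' bound yields $\rho_0 \ge p_0 \cdot p_0^{-1}\lambda_0 = \lambda_0$; if $a_1 = 1$, a single step trivially satisfies $\rho_0 \ge p_0$. Applying the multiplicative inequality to $X_0$ and $\prod_{j>a_1}g_j$ then gives $\tilde\rho := \rho(a_1+1, n) \le \rho/\rho_0 \le \prod_i \lambda_i / \rho_0 \le \prod_{i=1}^k \lambda_i$, where in the $a_1=1$ case the hypothesis $\lambda_0 \le p_0$ is exactly what absorbs the lost factor. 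Since $\lambda_1, \ldots, \lambda_k$ continue to satisfy the hypotheses of the lemma for the shifted walk on $\{a_1+1,\ldots,n\}$, the inductive hypothesis delivers the remaining intervals $I_1, \ldots, I_k$.

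For the ``in particular'' statement I would set $\lambda_i := \rho^{2^i/(2^{k+1}-1)}$. The exponents $2^i/(2^{k+1}-1)$ sum to $1$, so $\prod_i \lambda_i = \rho$, verifying $\rho \le \prod_i \lambda_i$. The hypothesis $\rho^{1/(2^{k+1}-1)}p_0^{-1} \le 1$ is precisely $\lambda_0 \le p_0$, and since $\rho \le 1$ and $2^i \ge 1$ we have $\lambda_i \le \lambda_0 \le p_0$ for all $i$. The first part then yields $\rho_i \le p_0^{-1}\lambda_i = \rho^{2^i/(2^{k+1}-1)}p_0^{-1}$, as claimed. The only delicate bookkeeping is the $a_1=1$ edge case where $\lambda_0 \le p_0$ is needed to absorb the wasted $p_0$ factor, but I do not expect any genuine obstacle beyond keeping track of this bound.
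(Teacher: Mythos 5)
Your proposal is correct and follows essentially the same route as the paper: induct on $k$, greedily choose the first cut using the same two elementary facts (supermultiplicativity of $\rho$ across independent blocks, and the ``one extra step costs at most a factor $p_0$'' bound), conclude $\lambda_0\le\rho_0\le p_0^{-1}\lambda_0$, divide out and recurse. The only cosmetic difference is that the paper selects the cut $\ell$ by a discrete intermediate-value argument ($\rho(1,\ell)\ge\lambda_0\ge\rho(1,\ell+1)$) rather than as the first index where $\rho(1,a_1)\le p_0^{-1}\lambda_0$, which sidesteps your $a_1=1$ edge case but is otherwise identical; the ``in particular'' part is verified the same way in both.
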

\begin{proof}
We induct on $k$. The result is trivial for $k=0$, so assume $k>0$ and the result is true for $k-1$. First, note that
$\sup_g\mathbb{P}(\prod_{i=1}^1 g_i=g)\ge p_0 \ge \lambda_0 \ge \rho=\sup_g\mathbb{P}(\prod_{i=1}^n g_i=g)$. Hence there is an index $1 \le \ell\le n-1$ such that $\sup_g\mathbb{P} (\prod_{i=1}^\ell g_i=g) \ge \lambda_0 \ge \sup_g\mathbb{P} (\prod_{i=1}^{\ell+1} g_i=g)$.

Then because $\sup_g\mathbb{P} (\prod_{i=1}^{\ell+1} g_i=g)\ge p_0\sup_g\mathbb{P} (\prod_{i=1}^\ell g_i=g)$, we must have $$\lambda_0\le \sup_g\mathbb{P} (\prod_{i=1}^\ell g_i=g) \le p_0^{-1}\lambda_0.$$
Now $\rho \ge \sup_g \mathbb{P}(\prod_{i=1}^\ell g_i=g)\sup_g \mathbb{P}(\prod_{i=\ell+1}^ng_i=g)\ge \lambda_0\sup_g\mathbb{P}(\prod_{i=\ell+1}^n g_i=g)$, so we conclude that $\sup_g \mathbb{P}(\prod_{i=\ell+1}^n g_i=g) \le \rho/\lambda_0$ and the result follows by taking $I_0=\{1,\ldots,\ell\}$ and applying the inductive hypothesis to $\{\ell+1,\ldots,n\}$.
\end{proof}
We now prove \Cref{thm:selfirreduciblecomb}, identically to the analogous result for $(\mathbb{R}^d,+)$ proved in \cite[Theorem 1.14]{fox2021geometric}.
\begin{proof}[Proof of \Cref{thm:selfirreduciblecomb}]
If $p_0^{-1}\rho^{1/(2^{k+1}-1)}>1$ then the first part of the theorem is trivial. Otherwise, we may apply the last part of \Cref{lem:partite} to obtain a partition $I_0\sqcup \ldots \sqcup I_k$ of $\{1,\ldots,n\}$ into contiguous intervals so that setting $X_i=\prod_{j\in I_i}g_j$, we have $\rho_i:=\sup_g\mathbb{P}(X_i=g)\le  \rho^{2^i/(2^{k+1}-1)}p_0^{-1}$. Then because $X=\prod_{i=0}^kX_i$, the first part of the theorem follows from the second part.

To prove the second part, we induct on $k$. For $k=0$ the result is trivial by the union bound, so assume now that $k\ge 1$. By the definition of self-translate dimension $\le k$ of complexity $C$ and the union bound, we may assume that $S$ itself has the property that $S\cap gS$ is of self-translate dimension $\le k-1$ of complexity $C$ for all $\text{id}_G\ne g\in G$. Write $Y=X_0$ and $Z=\prod_{i=1}^k X_i$, and consider the event $E(Y,Z)=\mathbb{P}(YZ\in S)$. Then by \Cref{lem:decouple}, we have
\begin{align*}\mathbb{P}(\prod_{i=0}^k X_i\in S)=\mathbb{P}(E(Y,Z))&\le \sup_{y \ne y'}\mathbb{P}(E(y,Z)\wedge E(y',Z))^{1/2}+2\sup_y(Y=y)\\
&\le\sup_{y \ne y'}\mathbb{P}(Z\in y^{-1}S\cap y'^{-1}S)^{1/2}+2\rho_0\end{align*}
Now, by \Cref{rem:equivdef} we have $y^{-1}S\cap y'^{-1}S$ has self-translate dimension $\le k-1$ with complexity $C$. By the inductive hypothesis we have $$\sup_{y \ne y'}\mathbb{P}(Z\in y^{-1}S\cap y'^{-1}S)^{1/2}= O_{C,k}(\max_{1\le i \le k}\rho_i^{1/2^{i-1}})^{1/2}=O_{C,k}(\max_{1\le i \le k}\rho_i^{1/2^i}).$$
Hence $\mathbb{P}(\prod_{i=0}^k X_i\in S) \le O_{C,k}(\max_i \rho_i^{1/2^i})$ as desired.
\end{proof}
\begin{rem}
\label{rem:betterexp}
In \cite{fox2021geometric}, the inverse result was stated as $O_{C,k}(\max_i \rho_i^{1/2^k})$ even though the same proof yields the slightly better $O_{C,k}(\max_i \rho_i^{1/2^i})$. Because of this, the $\rho_i$ were chosen to be equal rather than separated by powers of $2$. This minor optimization yields the slightly better exponents for the inverse theorems in \cite{fox2021geometric}, which we've included into the recollection of the result from \cite{fox2021geometric} (\Cref{thm:FKS}).
\end{rem}

\section{o-minimality definitions}
\label{sec:omindefs}

A structure $\mathbb{M}_{\mathcal{F}}=(M,<,\mathcal{F})$ extending a linear order $(M,<)$ is the additional data of a collection $\mathcal{F}$ of functions  $f:M^{n_f}\to M$ of various arities $n_f$. The structure $\mathbb{M}_{\mathcal{F}}$ identifies certain subsets $S_\phi\subset M^d$ as ``definable'' when they are the locus where a first-order formula $\phi(x_1,\ldots,x_d)$ in the structure is true. Geometrically, these are finite Boolean combinations of subsets $\{f_1>g_1,\ldots,f_a>g_a,f_{a+1}=g_{a+1},\ldots,f_{a+b}=g_{a+b}\}\subset M^d$ where $f_i,g_i$ are arbitrary compositions of constants from $M$ and functions from $\mathcal{F}$ (which correspond to quantifier-free $\phi$), together with alternating projections and complementations of these sets (which correspond to arbitrary $\phi$ written in prenex normal form).

\begin{defn}
A structure $\mathbb{M}_{\mathcal{F}}$ extending a dense linear order $(M,<)$ is said to be o-minimal if every definable subset $S\subset M$ is a finite union of points and intervals with endpoints in $M\cup \{-\infty,\infty\}$ (open on an endpoint at $-\infty$ or $\infty$, but allowed to be open or closed on an endpoint in $M$).
\end{defn}
Although this may seem like a rather weak hypothesis, it guarantees that the definable subsets are geometrically tame, in particular avoiding certain oscillatory and fractal behaviour. For example, definable sets in $\mathbb{R}_{\mathcal{F}}$ are always a finite union of topological manifolds. 

\begin{exmp}
\label{exmp:important}
Some important examples of o-minimal structures $\mathbb{R}_{\mathcal{F}}$ are the following.
\begin{itemize}
    \item The semi-algebraic structure $\mathbb{R}_{alg}:=\mathbb{R}_{\{+,\times\}}$. By the Tarski-Seidenberg theorem \cite{T51}, the definable subsets of $\mathbb{R}^d$ are the semi-algebraic subsets, which are finite Boolean combinations of subsets $\{f_1>0,\ldots,f_a>0,f_{a+1}=0,\ldots,f_{a+b}=0\}$ for polynomials $f_1,\ldots,f_{a+b}$.
    \item The restricted analytic-exponential structure $\mathbb{R}_{an,\exp}:=\mathbb{R}_{\mathcal{F}}$ of van den Dries and Miller \cite{DM94}, for $$\mathcal{F}=\{\exp,+,\times\}\cup \{f|_{[0,1]^r}: f:N\to \mathbb{R} \text{ analytic, where $N\subset \mathbb{R}^r$ is an open neighborhood of $[0,1]^r$}\}.$$
\end{itemize}
\end{exmp}

A definable group $G$ in an o-minimal structure $\mathbb{M}_{\mathcal{F}}$ is a definable subset $G\subset M^d$ for some $d$, equipped with a group structure such that the graph of the multiplication map in $M^{3d}$ is definable. 
  For example when $\mathbb{R}_{\mathcal{F}}=\mathbb{R}_{alg}$, $G$ is a semi-algebraic group such as $GL_d(\mathbb{R})$, $GL_d(\mathbb{C})$, the unitary group $U_d(\mathbb{C})$, $(\mathbb{R}^d,+)$, or an abelian variety. Since the study of definable groups was first initiated by Pillay \cite{Pillay}, their properties and classification have become a cornerstone of model theory. We refer the reader to the surveys of Otero \cite{otero_2008} and, for o-minimal structures extending a real-closed field, Conversano \cite{conversano2020groups}.

To decrease notational overhead, rather than working with ``parametrized formulas'' and ``definable families'' as in \cite{Cos00} and \cite{vdD98}, we choose instead to work with the equivalent notion of ``complexity'', which is the o-minimal analogue of degree for algebraic varieties.
\begin{defn}
We say that $\mathbb{M}_\mathcal{F}$ is finitely generated if $|\mathcal{F}|<\infty$. In this case, the complexity of a formula $\phi$, denoted $\operatorname{comp}_{\mathcal{F}}(\phi)$, is the number of symbols used in the defining formula of $\phi$ (where constants count as one symbol). For a definable set $S\subset \mathbb{R}^d$, we define $\operatorname{comp}_{\mathcal{F}}(S)=\min_{\phi: S=S_\phi}\operatorname{comp}_{\mathcal{F}}(\phi)$.
\end{defn}
 We note that given a formula of fixed combinatorial type, varying the parameters in $M$ yields a definable family, and a definable family is defined by a formula so the fibers are of bounded complexity. Hence these notions are interchangeable. Also, if $S=S_\phi$ is defined in an o-minimal structure $M_{\mathcal{F}}$, then it is also definable in the finitely generated o-minimal structure $M_{\mathcal{F}'}$, where $\mathcal{F}'$ contains the functions that appear in $\phi$, so working with finitely generated o-minimal structures is never a serious restriction.

\section{Facts about o-minimality}
\label{sec:dim}
Fix a dense linear order $(M,<)$, and let $\mathbb{M}_\mathcal{F}$ always refer to a finitely generated o-minimal structure. Definable sets in $\mathbb{M}_{\mathcal{F}}$ are very well-behaved, and we collect the properties of such sets here.
\subsection{Uniform Finiteness}
The notion of complexity is connected to o-minimality primarily through the following fact, which bounds the size of a finite definable set in terms of the complexity.
\begin{fact}[Uniform Finiteness Principle {\cite[Ch.3 Lemma 2.13]{vdD98}}]\label{thm:unifbound}If $S\subset M^d$ is definable in $\mathbb{M}_{\mathcal{F}}$, and $|S|<\infty$, then $|S|=O_{d,\mathcal{F},\operatorname{comp}_{\mathcal{F}}(S)}(1)$.
\end{fact}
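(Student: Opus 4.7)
The plan is to first translate the complexity formulation into the equivalent \emph{definable families} formulation: a definable subset $S \subset M^d$ of complexity at most $c$ is, up to boundedly many combinatorial types of defining formula, a fiber of a single definable subset $\widetilde{S} \subset M^d \times T$ parametrized by some definable $T$, and conversely every definable family arises this way. In this language the claim becomes: in any definable family $\{\widetilde{S}_t\}_{t \in T}$ of subsets of $M^d$ whose defining formula has bounded complexity, the finite fibers have uniformly bounded cardinality.

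The heart of the argument is the \emph{parametric Cell Decomposition Theorem}: any definable subset of $M^d \times T$ admits a partition into finitely many ``cells over $T$'', with the number of cells and the complexity of each depending only on the complexity of the input. A cell over $T$ is built inductively over a base cell by alternating graphs and open bands of continuous definable functions, so every cell has a well-defined \emph{relative dimension} over $T$. Crucially, a cell of positive relative dimension has fibers over each $t \in T$ that are either empty or infinite (they contain an interval or a product region), while cells of zero relative dimension contribute at most a single point to each fiber.

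With this in hand the deduction is quick. Decompose $\widetilde{S} = C_1 \sqcup \cdots \sqcup C_N$ into cells over $T$, where $N$ depends only on $\operatorname{comp}_{\mathcal{F}}(\widetilde{S})$ and $d$. For each $t$, $|\widetilde{S}_t| = \sum_i |(C_i)_t|$. If $C_i$ has positive relative dimension, then on the locus where $\widetilde{S}_t$ is finite we must have $(C_i)_t = \emptyset$; hence finite fibers are contributed only by the zero-relative-dimensional cells, each giving at most one point. This yields $|\widetilde{S}_t| \le N$ on the finite locus, which is the desired uniform bound.

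The main obstacle, and the content that makes this an o-minimal phenomenon rather than a formal one, is establishing the parametric Cell Decomposition Theorem itself. This is proved by simultaneous induction on $d$ together with the \emph{Monotonicity Theorem} (that every definable function on an interval is piecewise strictly monotone or constant); the parametric aspect requires careful bookkeeping to ensure that the partition of the base $T$, not merely its fibers, is definable with complexity depending only on the input. This induction occupies a substantial portion of van den Dries's development of o-minimality, and once it is in place Uniform Finiteness follows immediately as above.
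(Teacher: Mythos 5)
The paper does not prove this statement; it cites it directly as \cite[Ch.\,3 Lemma 2.13]{vdD98}. Your proposal is a correct sketch of exactly the argument in that reference (parametric cell decomposition plus the observation that only relatively zero-dimensional cells contribute to finite fibers), so it matches the source the paper leans on.
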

\subsection{Definable functions}
Definable functions are more general than just compositions of functions from $\mathcal{F}$.
\begin{defn}
Given subsets $A\subset M^a$ and $B\subset M^b$ and a function $f:A\to B$, we say $f$ is definable if $A,B$ are definable and the graph $$\Gamma(f):=\{(x,f(x)):x\in A\}\subset M^a\times M^b$$
 is definable.
\end{defn}
\begin{exmp}
In $\mathbb{R}_{alg}$, the function $y=x^{3/2}$ from $\mathbb{R}_{\ge 0}\to \mathbb{R}$ is definable because its graph is defined by the first order formula $$\phi(x,y):\exists t\in \mathbb{R}, t\ge 0 \wedge t^2=x \wedge t^3=y.$$
\end{exmp}
It is straightforward to check with this definition that
\begin{itemize}
    \item Compositions of definable functions are definable
    \item Restrictions of definable functions to definable sets are definable
    \item Images and pre-images of definable sets under definable functions are definable.
\end{itemize}
We will additionally need the following facts, showing that a definable curve is piecewise-continuous.
\begin{fact}[Monotonicity Theorem {\cite[Ch. 3 Theorem 1.2]{vdD98}}]
\label{fact:contshrink}
For an interval $I\subset M$ and a definable map $f:I\to M$, there is a finite set $F\subset I$ such that on each subinterval $J$ of $I\setminus F$ that $F$ splits $I$ into, $f|_J$ is either constant, or strictly monotone and continuous.
\end{fact}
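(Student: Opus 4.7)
The plan is to proceed in three stages: establish a local trichotomy at each point of $I$, promote this pointwise information to global constancy or strict monotonicity on each subinterval, and finally deduce continuity. The key inputs will be the o-minimality of $(M,<)$ (so every definable subset of $M$ is a finite union of points and intervals with endpoints in $M\cup\{-\infty,\infty\}$), the fact that definable sets are closed under the standard operations, and the Uniform Finiteness Principle (\Cref{thm:unifbound}).

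For the first stage, I would observe that for every $x\in I$, the three sets $\{y>x:f(y)>f(x)\}$, $\{y>x:f(y)=f(x)\}$, $\{y>x:f(y)<f(x)\}$ are definable and partition a right-neighbourhood of $x$, so by o-minimality exactly one of them contains an interval $(x,x+\epsilon)$, defining a right-type $\tau_r(x)\in\{+,0,-\}$; similarly one defines a left-type $\tau_\ell(x)$. I would then show that the locus $B\subset I$ where this trichotomy fails on either side is finite, and that the definable map $x\mapsto(\tau_\ell(x),\tau_r(x))$ is locally constant off $B$, so by o-minimality takes finitely many values on each maximal subinterval of $I\setminus B$. In the second stage, on each subinterval $J$ of $I\setminus B$ where the right-type is identically $+$, say, I would upgrade the pointwise statement to a global one: for $a<b$ in $J$, consider the supremum of the set $\{c\in[a,b]:f\text{ is strictly increasing on }[a,c]\}$, which is definable and nonempty; the constant local right-type at the supremum (and the analogous left-type statement) prevents it from being anywhere less than $b$, so $f$ is strictly increasing on $J$. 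Constant and strictly decreasing types are handled symmetrically.

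For the third stage, on each monotone piece $J$ the image $f(J)$ is definable hence a finite union of points and intervals, and any point of discontinuity of $f$ corresponds to a ``jump'' in the image, yielding only finitely many bad points which we absorb into $F$. The main obstacle I expect is showing finiteness of the trichotomy-failure locus $B$: without any field structure on $M$, $\epsilon$--$\delta$ arguments are unavailable, so one has to extract the local type purely from the o-minimal axiom. This step amounts to a cell-decomposition analysis of auxiliary definable plane sets such as $\{(x,y)\in I^2:x<y,\,f(x)<f(y)\}$ and their fibers, and will likely require a simultaneous induction on the complexity of defining formulas together with repeated application of \Cref{thm:unifbound} to ensure that the exceptional set is definable with uniformly bounded finite fibers --- in the spirit of van den Dries's proof of the one-variable cell decomposition theorem.
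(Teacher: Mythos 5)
This is stated in the paper as a \emph{Fact}, cited from van den Dries \cite[Ch.~3, Theorem~1.2]{vdD98} and used as a black box; the paper gives no proof, so there is nothing internal to compare against.

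That said, your outline is a reasonable reconstruction of the standard literature argument (Pillay--Steinhorn; van den Dries, Ch.~3): assign a definable left/right type to each interior point, show the type is piecewise constant with finitely many breakpoints, promote local monotonicity or constancy to each complementary subinterval by a supremum argument, and finally get continuity from o-minimality of the image. Two cautions worth flagging. First, your phrase ``the locus where the trichotomy fails'' is off: at any interior point exactly one of the three definable fibre-sets must contain a one-sided interval, so the trichotomy never fails; what you want is the finite set of endpoints of the maximal intervals on which the type-map is constant (plus the points where left and right types disagree). Second, leaning on \Cref{thm:unifbound} here is both unnecessary and a potential circularity: in most developments the Uniform Finiteness Principle is derived from cell decomposition, which is itself built on the Monotonicity Theorem. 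Finiteness of a single definable exceptional set $F\subset M$ follows directly from the o-minimality axiom alone (a definable subset of $M$ with empty interior is finite), and that is all that is needed at each stage of your argument.
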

\subsection{Definable groups}
Recall that a group $G\subset M^d$ is said to be definable in $\mathbb{M}_{\mathcal{F}}$ if the graph of its multiplication map in $M^{3d}$ is definable. 
\begin{rem}
\label{rem:Pillay}
By Pillay's structure theory for definable groups \cite{Pillay}, there is a unique topology on $G$ called the ``t-topology'' (possibly different than the induced topology of $\mathbb{R}^n$) making $G$ a topological group, such that there is an open set $U\subset G$ with $\dim(G\setminus U)<\dim(G)$, and the open sets in $U$ agree in both topologies. Furthermore, the t-topology is induced by finitely many definable open subsets $U_1,\ldots,U_\ell\subset M^k$ and charts $\phi_i:U_i\to G$ which are definable such that the transition maps are definable and continuous (here a map is definable if its graph is a definable set).

In particular if $\mathbb{M}_\mathcal{F}=\mathbb{R}_\mathcal{F}$ then $G$ is naturally a locally Euclidean topological group, and hence by the resolution to Hilbert's fifth problem by Gleason \cite{Gleason} and Montgomery-Zippen \cite{MontZip}, a smooth Lie group.
\end{rem}

Subgroups $H\subset G$ of definable groups are closed \cite[Proposition 2.7]{Pil87}, quotients $G/H$ of definable groups $G$ by definable normal subgroups $H$ are themselves definable groups by Edmundo \cite[Theorem 7.2]{EDMUNDO2003103} (which guarantees a definable family of coset representatives for $G/H$ in $G$), and group homomorphisms $G_1\to G_2$ between definable groups are always continuous in the ``t-topology'' (since \Cref{fact:contshrink} establishes continuity in some open subset of $G_1$, and we can use the continuous left-multiplication in $G_1$ and $G_2$ to deduce global continuity).

The following special case of a fact of Edmundo shows definable lifts exist along maps whose domain is a subset of a definable group.
\begin{fact}[Definable lifts across maps whose domain lies in a definable group {\cite[Theorem 7.2]{EDMUNDO2003103}}]
\label{fact:definablelift}
Let $Y\subset G$ be a subset of a definable group, and $\phi:Y\to X$ a definable surjection onto a definable set. Then given a definable map $\psi:Z\to X$, there is a lift $\widetilde{\psi}:Z\to Y$ so that $\phi \circ \widetilde{\psi}=\psi$.
\end{fact}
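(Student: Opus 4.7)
The plan is to reduce \Cref{fact:definablelift} to constructing a definable section $s:X\to Y$ of $\phi$, since then $\widetilde{\psi}:=s\circ\psi$ immediately works. The real content is therefore a \emph{definable choice} principle: one must select, definably in $x\in X$, an element of the non-empty definable fibre $F_x:=\phi^{-1}(x)\subset Y\subset G$.

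The first step is a reduction to choice in $M^k$ via Pillay's chart structure (\Cref{rem:Pillay}). Cover $G$ by finitely many definable charts $\phi_i:U_i\to G$ with $U_i\subset M^k$ open and definable. For each $x\in X$ the index $i(x):=\min\{i:F_x\cap \phi_i(U_i)\ne\emptyset\}$ is definable in $x$, and pulling $F_x\cap \phi_{i(x)}(U_{i(x)})$ back through the chart yields a definable family $\{D_x\}_{x\in X}$ of non-empty definable subsets of $M^k$. It now suffices to choose $d_x\in D_x$ definably in $x$.

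The second step is the choice itself. I would proceed coordinate-by-coordinate using the o-minimal cell decomposition of $D_x$: project $D_x$ to its first coordinate, obtaining a non-empty definable $P_x\subset M$, which by o-minimality is a finite union of points and intervals. The leftmost connected component is either a singleton, giving a canonical choice, or an open/half-open interval. In the latter case the pure dense linear order $(M,<)$ provides no canonical interior point; here the hypothesis $Y\subset G$ must be genuinely used. The strategy is to name points using the definable group data -- the distinguished element $\mathrm{id}_G$, the definable multiplication, and the finite chart atlas -- to carve out a canonical element of each $D_x$; for instance, by translating $F_x$ through a definable $G$-action so that its first-coordinate cell is normalized against the chart centered at $\mathrm{id}_G$, and repeating for the remaining coordinates. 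Applying $\phi_{i(x)}$ to the resulting $d_x$ yields $s(x)\in F_x$.

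The main obstacle is precisely this last step: in a general o-minimal structure over a pure dense linear order, definable Skolem functions on $M^k$ need not exist, so one cannot simply invoke definable choice on the ambient affine space where $Y$ happens to sit. The crux of Edmundo's argument is to manufacture canonical representatives from the group law on $G$, uniformly across the chart atlas and compatibly with the transition maps. This is exactly why the hypothesis $Y\subset G$ (rather than merely $Y\subset M^d$) is essential, and everything else in the proof -- the reduction to a section and the reduction to a single chart -- is formal.
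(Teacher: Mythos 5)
The paper gives no proof of this Fact: it is imported verbatim as a citation to Edmundo's Theorem 7.2, and both places it is invoked in the paper (the lift in the proof of \Cref{prop:boundedBadS} and of \Cref{thm:generalominimal}, and the remark after \Cref{lem:finsolvab} about bypassing definable choice in Strzebonski) treat it as a black box. So there is no in-paper argument to compare against; you are essentially being asked to reconstruct an external theorem.

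Your framing is right at the top level --- a lift is a section composed with $\psi$, a section is definable choice over the base, and definable choice can genuinely fail over a pure dense linear order, so the hypothesis $Y\subset G$ is doing real work. The gap is in the step where you actually use the group. ``Translating $F_x$ through a definable $G$-action so that its first-coordinate cell is normalized against the chart centered at $\operatorname{id}_G$'' is circular: to carry out such a translation you must first \emph{definably choose} the translating element $g_x$ (to bring the cell into standard position you need $g_x$ adapted to $F_x$, e.g.\ $g_x\in F_x^{-1}$), which is exactly the choice you set out to make. A fixed (parameter) translation does not normalize anything, and a translation depending on $x$ requires a Skolem function you have not produced. Even in the best case where you manage to translate an interval $(a,b)$ to $(\operatorname{id}_G, a^{-1}b)$, you are no closer to picking an interior point: the identity itself is not in that open interval. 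The actual content of Edmundo's theorem is not a local normalization trick but a structural induction using the classification of one-dimensional definable groups (which are either ``ordered-group-like,'' providing a genuine choice function via the induced ordering, or definably compact) and a dimension induction through the chart atlas; the honest proof cannot be compressed into the sketch you gave because the key ingredient --- how to choose a point from an open interval using group data --- is precisely what is missing from it.
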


\subsection{Dimension}Definable sets have a good theory of dimension. 
\begin{defn}[{\cite[Ch.4 Exercise 1.17(1)]{vdD98}}]
If $S\subset M^b$ is definable, then the dimension of $S$, written $\dim(S)$, is the maximum $k$ such that the image of $S$ under at least one of the $\binom{b}{k}$ coordinate projections $M^b\to M^k$ has nonempty interior.
\end{defn}

When $\mathbb{M}_{\mathcal{F}}=\mathbb{R}_{\mathcal{F}}$, the o-minimal notion of dimension is equivalent to all other reasonable notions of dimension (e.g. Hausdorff dimension).
 We collect some basic facts about dimension in arbitrary o-minimal $\mathbb{M}_{\mathcal{F}}$ which we will use freely, all of which can be found in \cite{vdD98}. In this list, all sets are definable.
\begin{itemize}
\item If $S_1\subset S_2$ then $\dim(S_1)\le \dim(S_2)$.
\item $\dim M^d=d$.
\item $\dim(S)=0$ if and only if $S$ is a finite union of points.
\item If there is a definable bijection $S_1\to S_2$, then $\dim(S_1)=\dim(S_2)$.
\item If $S=S_1\cup S_2$ then $\dim S=\max(\dim S_1,\dim S_2)$.
\item For a definable subset $S\subset M^d$, the topological closure $\overline{S}$ is definable and $\dim(\overline{S}\setminus S)<\dim(S)$.
\end{itemize}
This last fact about dimension has the  following corollary (a version of which was proved in \cite[Fact 7.12]{fox2021geometric}) which we will need later.
\begin{cor}
\label{prop:XYk}
If $X,Y\subset M^d$ are dimension $k$ sets definable in an o-minimal structure $\mathcal{F}$ such that $\dim(X\cap Y)=k$, then there exists $z\in X \cap Y$ and an open neighborhood $N$ of $z$ such that $N\cap X=N\cap Y$.
\end{cor}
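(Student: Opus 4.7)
The plan is to directly exploit the listed fact that for any definable set $S\subset M^d$, the frontier $\overline{S}\setminus S$ has strictly smaller dimension than $S$ itself. The natural sets to apply this to are $A := X\setminus Y$ and $B := Y\setminus X$, which are both definable (as Boolean combinations of definable sets). We aim to find a point $z\in X\cap Y$ that avoids $\overline{A}\cup\overline{B}$, and take $N := M^d\setminus(\overline{A}\cup\overline{B})$, which is open by construction.

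First I would verify that such a $z$ exists. Observe that $A\cap(X\cap Y) = \emptyset$ since $A = X\setminus Y$, so
\[
(X\cap Y)\cap\overline{A} \;\subset\; \overline{A}\setminus A,
\]
and the right-hand side has dimension $<\dim(A)\le\dim(X)=k$ by the cited fact (with the usual convention $\dim(\emptyset)=-\infty$ handling the degenerate case). The symmetric argument shows $(X\cap Y)\cap\overline{B}$ has dimension $<k$. Since $\dim(X\cap Y)=k$ by hypothesis and dimension is subadditive under unions, the set $(X\cap Y)\setminus(\overline{A}\cup\overline{B})$ still has dimension $k$, and in particular is nonempty. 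Pick any $z$ in it.

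It remains to check that $N\cap X = N\cap Y$. On the one hand, any point $x\in N\cap X$ lies in $X$ but not in $\overline{A}\supset A = X\setminus Y$, hence must lie in $Y$; so $N\cap X\subset N\cap Y$. The reverse inclusion follows identically using $\overline{B}\supset B = Y\setminus X$. Together with $z\in N\cap X\cap Y$, this completes the proof.

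I do not expect a real obstacle here; the only subtlety is the book-keeping with $\overline{A}\setminus A$ rather than $\overline{A}$ itself, since $\overline{A}$ could a priori have dimension $k$ and meet $X\cap Y$ in a full-dimensional piece --- but that piece is automatically inside $\overline{A}\setminus A$ because $X\cap Y$ is disjoint from $A$, which is exactly the point that makes the argument go through.
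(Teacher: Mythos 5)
Your proof is correct and is essentially the paper's own argument, just with slightly finer bookkeeping: the paper sets $W = (X\cup Y)\setminus(X\cap Y)$, notes $\dim(\overline{W}\setminus W) < k$, and takes $N = M^d\setminus\overline{W}$; your $A = X\setminus Y$ and $B = Y\setminus X$ satisfy $A\cup B = W$ and $\overline{A}\cup\overline{B} = \overline{W}$, so your $N$ coincides with the paper's. The key observation in both is that $X\cap Y$ is disjoint from $W$ (equivalently from $A$ and $B$), so intersecting $X\cap Y$ with $\overline{W}$ only meets the lower-dimensional frontier $\overline{W}\setminus W$.
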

\begin{proof}
Let $W=X\cup Y \setminus (X\cap Y)$. Then $\dim(\overline{W}\setminus W)\le \dim(W)-1\le k-1$, so $(X\cap Y)\setminus \overline{W}=(X\cap Y)\setminus (\overline{W}\setminus W)$ is nonempty. Hence taking $N=M^d\setminus \overline{W}$ and $z\in (X\cap Y)\setminus \overline{W}$, the result follows.
\end{proof}

We also have the following fact which says that the points in the image of a definable set under a linear projection whose pre-images have dimension $k$ is definable of bounded complexity.
\begin{fact}[{\cite[Ch.4 Proposition 1.5]{vdD98}}]\label{prop:definableloci}
Let $L:M^a\times M^b\to M^b$ be the coordinate projection onto the last $b$ coordinates. Then for a definable set $S\subset M^{a}\times M^b$ and any $k$, the subset $T=\{x\in L(S):\dim(L|_S^{-1}(x))=k\}\subset L(S)$ is definable of complexity $O_{a,b,\mathcal{F},\operatorname{comp}_{\mathcal{F}}(S)}(1)$, and $\dim(L|_S^{-1}(T))=\dim T + k$.
\end{fact}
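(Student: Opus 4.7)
The plan is to reduce to the cell decomposition theorem. Recall that for a finitely generated o-minimal structure $\mathbb{M}_{\mathcal{F}}$, the cell decomposition theorem (van den Dries Ch.3 Theorem 2.11) partitions any definable $S \subset M^a \times M^b$ into finitely many cells $C_1, \ldots, C_m$ compatible with the projection $L$, meaning that each $L(C_i)$ is itself a cell in $M^b$ and all fibers $L|_{C_i}^{-1}(x)$ for $x \in L(C_i)$ are cells of a fixed dimension $d_i \in \{0,1,\ldots,a\}$ depending only on the cell $C_i$. Moreover the number $m$ of cells and the complexities of all the $C_i$, $L(C_i)$ are bounded by $O_{a,b,\mathcal{F}, \operatorname{comp}_{\mathcal{F}}(S)}(1)$.

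Given such a decomposition, for each $x \in L(S)$ the fiber $L|_S^{-1}(x)$ is a disjoint union of the cell fibers $L|_{C_i}^{-1}(x)$ over those $i$ with $x \in L(C_i)$, and since the union of finitely many definable sets has dimension equal to the maximum of their dimensions, we have
\[
\dim L|_S^{-1}(x) = \max\{d_i : x \in L(C_i)\}.
\]
Therefore
\[
T = \Bigl(\bigcup_{i : d_i = k} L(C_i)\Bigr) \setminus \Bigl(\bigcup_{j : d_j > k} L(C_j)\Bigr),
\]
which is a Boolean combination of $O_m(1)$ definable sets each of bounded complexity. This gives definability of $T$ with $\operatorname{comp}_{\mathcal{F}}(T) = O_{a,b,\mathcal{F}, \operatorname{comp}_{\mathcal{F}}(S)}(1)$.

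For the dimension formula, write $L|_S^{-1}(T) = \bigsqcup_i (C_i \cap L^{-1}(T))$. For each cell, since $L|_{C_i}$ has fibers of constant dimension $d_i$ over its image, one has $\dim(C_i \cap L^{-1}(T)) = \dim(L(C_i) \cap T) + d_i$ (this is itself a standard cell-fiber dimension identity). When $d_i > k$, definition of $T$ forces $L(C_i) \cap T = \emptyset$, contributing nothing. When $d_i < k$, the contribution is at most $\dim T + (k-1)$. So the maximum is attained on cells with $d_i = k$, giving $\dim L|_S^{-1}(T) = \max_{i:d_i=k}(\dim(L(C_i)\cap T) + k)$. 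Since $T$ itself is a union of sets $L(C_i) \cap T$ for $d_i = k$ (after removing the $d_j > k$ images), $\dim T = \max_{i:d_i=k} \dim(L(C_i) \cap T)$, yielding $\dim L|_S^{-1}(T) = \dim T + k$.

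The main obstacle in this approach is purely citational: extracting from a cell decomposition theorem both (i) the boundedness of the cell count in terms of complexity, and (ii) the compatibility of the partition with $L$ so that cell fibers have constant dimension over their cell images. Once those are in hand, both the definability and the dimension identity follow from essentially elementary manipulations using the basic dimension properties (max over finite unions, and the fiber-plus-base formula for cells) listed earlier.
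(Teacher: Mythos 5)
This statement is cited in the paper as a Fact from van den Dries \cite[Ch.4~Proposition~1.5]{vdD98}, so the paper offers no proof of its own; you are reconstructing the standard argument, and your reconstruction via cell decomposition is essentially the canonical one. A few small points to be aware of. The cell decomposition theorem as usually stated is compatible with projections onto \emph{initial} coordinate blocks, so to apply it to the projection $L$ onto the last $b$ coordinates you must first permute coordinates (harmless, but worth noting). The uniform bound on the number of cells and their complexities in terms of $\operatorname{comp}_\mathcal{F}(S)$ is not literally what the single-set cell decomposition statement gives you; you need the version for definable families (also in van den Dries, Ch.3), or equivalently the observation that a formula of bounded length admits a cell decomposition whose data is definable uniformly in the parameters. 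Finally, the identity $\dim(C_i\cap L^{-1}(T)) = \dim(L(C_i)\cap T)+d_i$ is best justified by further decomposing $T$ into cells and pulling back, so that each resulting piece is itself a cell to which the basic cell-fiber formula $\dim C = \dim L(C) + d$ (where $d$ is the constant fiber dimension of $C$) applies directly; as stated, $C_i\cap L^{-1}(T)$ need not be a cell. With those refinements your boolean-combination description of $T$ and the dimension bookkeeping (discarding $d_j>k$, bounding $d_i<k$, and maximizing over $d_i=k$) are all correct.
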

\section{Proof of \Cref{thm:NST}}
Let $G$ be a definable group in an o-minimal structure $\mathbb{M}_{\mathcal{F}}$. We want to show that $G$ has the ``no small torsion'' property \eqref{tag:NST}.

\begin{lem}
\label{lem:extension}
Let $H\subset G$ be a definable normal subgroup of a definable group $G$. Then if $H,G/H$ have the ``no small torsion'' property \eqref{tag:NST}, then $G$ also has this property.
\end{lem}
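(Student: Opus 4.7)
The plan is to combine the NST hypotheses on $H$ and $G/H$ via the quotient homomorphism $\pi \colon G \to G/H$. Fix $m$; it suffices to produce a neighborhood $\widehat{N}_m$ of $\operatorname{id}_G$ in $G$ containing no non-identity element of order at most $m$.

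By NST for $G/H$, choose an open neighborhood $V$ of $\operatorname{id}_{G/H}$ containing no non-identity element of order $\le m$. The homomorphism $\pi$ is definable, hence continuous in the t-topology (as recorded in \Cref{sec:dim}), so $W := \pi^{-1}(V)$ is open in $G$ and contains $\operatorname{id}_G$. For any $g \in W$ with $g^k = \operatorname{id}_G$ for some $1 \le k \le m$, we get $\pi(g)^k = \operatorname{id}_{G/H}$ with $\pi(g) \in V$, forcing $\pi(g) = \operatorname{id}_{G/H}$ and therefore $g \in H$. Next, by NST for $H$ (applied in $H$'s own t-topology), choose an open neighborhood $U$ of $\operatorname{id}_H$ in $H$ with no non-identity element of order $\le m$. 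Since $H$ is a closed definable subgroup of $G$, its t-topology agrees with the subspace topology inherited from $G$, so we can write $U = U' \cap H$ for some open $U' \subset G$ containing $\operatorname{id}_G$. Set $\widehat{N}_m := W \cap U'$: any $g \in \widehat{N}_m$ of order $\le m$ lies in $H$ by the first paragraph, hence in $H \cap U' = U$, hence equals $\operatorname{id}_G$, as required.

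The main obstacle is justifying that the t-topology on the closed subgroup $H$ coincides with the subspace topology from $G$'s t-topology, which is what lets us extend the NST-neighborhood $U$ inside $H$ to an open set in $G$. This should follow from the Pillay manifold structure recalled in \Cref{rem:Pillay}: one can either verify directly that around $\operatorname{id}_H$ the definable t-charts for $H$ are compatible with restrictions of t-charts for $G$, or invoke a uniqueness property of topological-group structures on $H$ compatible with its definable structure to identify the two topologies. Once this identification is in hand, the remainder is a straightforward combination of the hypotheses.
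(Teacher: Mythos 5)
Your argument is essentially identical to the paper's: both proofs pull back an NST-neighborhood from $G/H$ along the continuous quotient map $\pi$, extend an NST-neighborhood of $\operatorname{id}_H$ in $H$ to an open subset of $G$, and intersect the two. The only difference is that you explicitly flag the need for the t-topology on the closed subgroup $H$ to agree with the subspace topology from $G$, a compatibility the paper uses silently when it asserts the existence of an open $N_m'\subset G$ with $N_m'\cap H$ torsion-free; your caution is reasonable but the fact is standard in the Pillay framework.
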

\begin{proof}
We work with the ``t-topology'' on all groups throughout.
Let $\pi:G\to G/H$ be the quotient map. By the \eqref{tag:NST} property for $G/H$ there is an open neighborhood $\operatorname{id}_{G/H}\in N_m\subset G/H$ such that $ N_m-\{\operatorname{id}_{G/H}\}$ contains no torsion of order $\le m$, and by the \eqref{tag:NST} property for $H$ there is an open neighborhood $\operatorname{id}_G\in N_m'\subset G$ such that $N_M'\cap H-\{\operatorname{id}_G\}$  contains no torsion of order $\le m$. Then $U=N_m'\cap \pi^{-1}(N_m)$ has the property that $U-\{\operatorname{id}_G\}$ contains no torsion of order $\le m$, since for $x\in U$ either $\pi(x)\in N_M-\{\operatorname{id}_{G/H}\}$, or else $x\in N_m'\cap \pi^{-1}(\operatorname{id}_{G/H})=N_m' \cap H$.
\end{proof}

\begin{lem}
\label{lem:finsolvab}
If $G$ is either finite or abelian, then $G$ satisfies the ``no small torsion'' property \eqref{tag:NST}.
\end{lem}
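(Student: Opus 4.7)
My plan is to treat the two cases separately. If $G$ is finite, then $\dim G = 0$, so by Pillay's structure theorem (\Cref{rem:Pillay}) the t-topology on $G$ is discrete: either one observes that a chart $\phi_i : U_i \to G$ with $U_i$ open in $M^{\dim G} = M^0$ must have a singleton domain, or more abstractly that a finite Hausdorff topological group is automatically discrete. Either way $\{\operatorname{id}_G\}$ is itself t-open and may serve as $\widehat{N}_m$ for every $m$.

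For the abelian case, I will reduce \eqref{tag:NST} to showing that the $m$-torsion set $T_m := \{g \in G : g^m = \operatorname{id}_G\}$ is finite. Since $G$ is abelian, $T_m$ is a definable subgroup (being the preimage of $\operatorname{id}_G$ under the definable homomorphism $\phi_m(g) := g^m$). Once $T_m$ is finite it is also t-closed and t-discrete, so $\operatorname{id}_G$ is an isolated point of $T_m$, and I may pick a t-open neighborhood $\widehat{N}_m$ of $\operatorname{id}_G$ disjoint from the finite set $T_m \setminus \{\operatorname{id}_G\}$.

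To prove $T_m$ is finite I first pass to the t-topology identity component $G^0$ via \Cref{rem:Pillay}: it is a definable open subgroup of finite index in $G$, so $T_m \cap G^0$ has finite index in $T_m$, and it suffices to show $T_m \cap G^0$ is finite. For this I invoke the standard structural fact that every definably connected definable abelian group in an o-minimal structure is divisible (Strzebo\'nski, Edmundo, Berarducci--Otero), which makes $\phi_m|_{G^0}$ surjective. Because $\phi_m|_{G^0}$ is a definable homomorphism, every nonempty fiber is a coset of $T_m \cap G^0$ and hence shares its dimension, so the fiber-dimension formula (\Cref{prop:definableloci}) gives $\dim G^0 = \dim G^0 + \dim(T_m \cap G^0)$, forcing $\dim(T_m \cap G^0) = 0$ and therefore finiteness.

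The main obstacle is the divisibility input: unlike in the Lie-group setting there is no a priori exponential map on an abstract definable abelian group that would let one extract $m$-th roots directly, so one has to appeal to the structural theory of definable abelian groups. Everything else---finiteness of $G/G^0$, Hausdorffness of the t-topology, closedness of definable subgroups, and the fiber-dimension formula---is routine o-minimal bookkeeping from Pillay's structure theorem and the dimension theory collected in \Cref{sec:dim}.
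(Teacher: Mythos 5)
Your proposal is correct, and in the finite case it matches the paper (the paper just says ``trivial''; your observation that a finite Hausdorff group is discrete is the implicit content). In the abelian case, however, you take a genuinely different route to finiteness of the $m$-torsion. The paper invokes Strzebo\'nski's result directly --- an abelian definable group of bounded exponent is finite ([Str94, Proposition 6.1], modulo the definable-choice caveat addressed via Edmundo) --- and applies it to each $G[k]$, then takes $\widehat{N}_m = G \setminus \bigl(\bigcup_{k=2}^m G[k] \setminus \{\operatorname{id}_G\}\bigr)$. You instead pass to the definably connected component $G^0$, invoke the (related but distinct) structural fact that a definably connected abelian definable group is divisible, and then extract finiteness of $T_m \cap G^0$ from surjectivity of $g \mapsto g^m$ together with the fiber-dimension formula, finally lifting to $T_m$ via finiteness of $G/G^0$. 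Both proofs draw from the same circle of structural results on definable abelian groups; the paper's is shorter because it cites exactly the consequence needed, whereas yours reconstructs that consequence from divisibility at the cost of a dimension-counting step. Your route has the minor pedagogical advantage of making the mechanism (surjective endomorphism with zero-dimensional kernel) explicit rather than black-boxing the bounded-exponent result. Both arguments are valid.
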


\begin{proof}
If $G$ is finite then the result is trivial. If $G$ is abelian, then the $k$-torsion points $G[k]$ form a subgroup of bounded exponent, which is finite by Strzebonski \cite[Proposition 6.1]{Str94} (it is well known that the definable choice assumption needed \cite{Str94} can be overcome by applying \cite[Theorem 7.2]{EDMUNDO2003103}, see for example the discussion in Otero's survey on definable groups \cite{otero_2008} after \cite[Theorem 5.1]{otero_2008}), and hence we may take the open neighborhood around the identity to be $G\setminus (\bigcup_{k=2}^m G[k]-\{\operatorname{id}_G\})$.

\end{proof}

\begin{lem}
\label{lem:GLn}
If $G$ is a definable subgroup of a product $\prod_{i=1}^k GL_{n_i}(\mathcal{R}_i)$ of general linear groups over real closed fields, then $G$ has the ``no small torsion'' property \eqref{tag:NST}.
\end{lem}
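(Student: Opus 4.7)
The plan is to prove NST first for $GL_n(\mathcal{R})$, then handle products by taking products of neighborhoods, and finally descend to the subgroup $G$ by identifying the t-topology with the induced topology. For the descent, note that the topology induced on $G$ from the product $\prod_i GL_{n_i}(\mathcal{R}_i)$ makes $G$ a topological group (as a subgroup of one) and trivially agrees with itself on the open dense subset $U = G$, so by the uniqueness statement in \Cref{rem:Pillay} it coincides with the t-topology. A product of NST neighborhoods for the factors is an NST neighborhood for the product (if $(g_1,\ldots,g_k)$ has order $\le m$ then each $g_i$ has $g_i^m = I$ hence order $\le m$), and intersecting such a neighborhood with $G$ preserves the property, so it suffices to establish NST for a single $GL_n(\mathcal{R})$.

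For $GL_n(\mathcal{R})$, I will show that $I$ is an isolated point of $T_m := \{A \in GL_n(\mathcal{R}) : A^m = I\}$. Work in $GL_n(\mathcal{R}[i])$, which is $GL_n$ of an algebraically closed field of characteristic zero. Any $A$ with $A^m = I$ has minimal polynomial dividing the separable polynomial $x^m - 1$, hence is diagonalizable over $\mathcal{R}[i]$, with eigenvalues among the (finitely many) $m$-th roots of unity in $\mathcal{R}[i]$. Partitioning $T_m(\mathcal{R}[i])$ by the characteristic polynomial $\chi_A$ yields finitely many parts, one for each multiset of size $n$ of $m$-th roots of unity; the elements of each part share a spectrum and are all semisimple, hence form a single $GL_n(\mathcal{R}[i])$-conjugacy class. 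Each part is the fiber of the polynomial (hence continuous) map $A \mapsto \chi_A$ intersected with $T_m(\mathcal{R}[i])$, so each is closed in $T_m(\mathcal{R}[i])$; a finite disjoint union of closed sets has each summand open in the union, so in particular the part $\{I\}$ corresponding to $\chi_I = (x-1)^n$ is open in $T_m(\mathcal{R}[i])$. This exhibits $I$ as an isolated point of $T_m(\mathcal{R}[i])$, and restricting to $GL_n(\mathcal{R}) \subseteq GL_n(\mathcal{R}[i])$ produces the desired NST neighborhood.

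The main technical point to verify is that the standard linear-algebraic facts used (separability of the minimal polynomial implying diagonalizability, and semisimple matrices with a common characteristic polynomial being conjugate) hold over the algebraically closed characteristic-zero field $\mathcal{R}[i]$ rather than just $\mathbb{C}$; these are first-order statements in the theory of algebraically closed fields of characteristic zero, hence transfer. The argument is purely combinatorial in $m$ and $n$ and avoids invoking the exponential map, which is a key advantage since $\exp_{GL_n}$ is not definable in general o-minimal structures as noted in the discussion following \Cref{thm:trajectory}.
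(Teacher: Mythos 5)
Your proposal is correct, and it takes a genuinely different route from the paper. The paper's proof first observes that \eqref{tag:NST} for $GL_n(\mathcal{R})$ is a first-order property and hence transfers from $GL_n(\mathbb{R})$ by completeness of the theory of real closed fields, and then over $\mathbb{R}$ it exhibits the neighborhoods via the matrix exponential: on a ball $B$ where $\exp_{GL_n(\mathbb{R})}$ is a homeomorphism, $\exp(\tfrac1k B)$ contains no non-identity $g$ with $g^k=\operatorname{id}$. Your argument instead works directly and algebraically: over $\mathcal{R}[i]$, every $m$-torsion matrix is semisimple with spectrum among the finitely many $m$-th roots of unity, so the $m$-torsion locus is a finite disjoint union of fibers of the (continuous, polynomial) characteristic-polynomial map; these fibers are closed and partition the torsion locus, hence each is clopen, and the fiber over $(x-1)^n$ is exactly $\{I\}$. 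This avoids both the matrix exponential and the model-theoretic transfer for the topological statement (the linear-algebra facts you cite are standard over any algebraically closed field of characteristic zero and do not actually need the transfer you invoke), and is arguably more self-contained. Two small points of care. First, your justification for identifying the t-topology on $G$ with the subspace topology from the ambient product via \Cref{rem:Pillay} is phrased a bit circularly: the characterization in that remark asks for agreement with the $M^n$-subspace topology on a large open set, not agreement "with itself"; this is repairable but worth stating precisely (the paper glosses over the same point when it reduces to the product without comment). Second, isolating $I$ in $T_m=\{A:A^m=I\}$ excludes elements of order dividing $m$, not of order $\le m$; one should pass to $T_{m!}$ or intersect the neighborhoods obtained for $T_1,\ldots,T_m$ (the paper's own wording has the same minor imprecision).
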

\begin{proof}
It suffices to show the property for $\prod_{i=1}^k GL_{n_i}(\mathcal{R}_i)$, and this would follow if we can establish each $GL_{n_i}(\mathcal{R}_i)$ has the \eqref{tag:NST} property. 

Because this is a first-order property, this is equivalent to the no small torsion property for $GL_{n}(\mathbb{R})$. But if $0\in B\subset \mathfrak{gl}_n(\mathbb{R})$ is an open ball around $0$ on which the matrix exponential map $\exp_{GL_n(\mathbb{R})}:\mathfrak{gl}_n(\mathbb{R})\to GL_n(\mathbb{R})$ is a homeomorphism onto an open subset of $GL_n(\mathbb{R})$, then $\exp_{GL_n(\mathbb{R})}(\frac{1}{k}B)-\{\operatorname{id}\}$ has no torsion of order $k$ (since for $0\ne A\in B$ we have $(\exp_{GL_n(\mathbb{R})}(\frac{1}{k}A))^k=\exp_{GL_n(\mathbb{R})}(A)\ne \exp_{GL_n(\mathbb{R})}(0)=\operatorname{id}$).
\end{proof}
Now we can prove \Cref{thm:NST}.

\begin{proof}[Proof of \Cref{thm:NST}]
By Pillay \cite[Proposition 5.2]{Pil87} the quotient of a definable group by the definably connected component of the identity is finite, so applying \Cref{lem:finsolvab} and \Cref{lem:extension} we may assume that $G_0$ is definably connected.

Let $G_0=G$, and let $G_i=G_{i-1}/Z(G_{i-1})$, the quotient of $G_{i-1}$ by its center (which is a definable normal subgroup). Then we claim that $G_{k}$ is centerless for some $k\le \dim(G)+1$. Indeed, whenever $\dim Z(G_{i-1}) \ge 1$, we have $\dim G_i = \dim  G_{i-1}+\dim Z(G_{i-1})>\dim(G_{i-1})$ by the last part of \Cref{prop:definableloci}. Hence for some $i\le \dim G$ we must have $\dim Z(G_i)=0$, so by \Cref{thm:unifbound} $Z(G_i)$ is finite.

We claim that for $k=i+1$ we have $G_{k}$ is centerless. Indeed, if $a\in G_i$ descends to a central element in $G_{k}=G_i/Z(G_i)$, then the continuous map $b\mapsto aba^{-1}b^{-1}$ is a definable continuous map $G_i\to Z(G_i)$, which by the discreteness of $Z(G_i)$ and the definable connectedness of $G_i$ implies that the map is constant, and hence $a\in Z(G_i)$.

Repeatedly applying \Cref{lem:finsolvab} and \Cref{lem:extension}, we see that it suffices to prove the \eqref{tag:NST} property for $G_k$. By Peterzil, Pillay, Starchenko \cite[Theorem 3.1 and Theorem 3.2]{PPS00}, $G_k$ is a subgroup of a product of general linear groups over real closed fields, so we conclude by \Cref{lem:GLn}.
\end{proof}

\section{Locally avoiding unboundedly large arithmetic progressions}
\label{sec:expmatrixarcs}
In this section, we show the following proposition, which allows us to consider \Cref{thm:trajectory} and \Cref{thm:generalominimal} at the same time by working with a common weakening of the hypotheses on $S$.
\begin{prop}\label{prop:balls}
If $G$ is a definable group in  $\mathbb{R}_{an,\exp}$ and $S\subset G$ is a definable subset that contains no exponential arc, then for all $h\in S$ there exists an open neighborhood $N_h$ of $h$ in $G$ such that $N_h\cap S$ does not contain unboundedly large arithmetic progressions starting at $h$.
\end{prop}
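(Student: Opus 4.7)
The plan is to pull $S$ back to the Lie algebra $\mathfrak{g}$ of $G$ via a definable exponential chart and apply the Uniform Finiteness Principle to a definable family of subsets of $[0,1]$ parametrized by tangent vectors.

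First I would fix a radius $r > 0$ small enough that the Lie group exponential $\exp_G$ restricts to a diffeomorphism from the closed ball $\overline{B}_r \subset \mathfrak{g}$ onto a compact neighborhood of $\operatorname{id}_G$. Because we are working in $\mathbb{R}_{an,\exp}$, the restriction $\exp_G|_{\overline{B}_r}$, being real analytic on a neighborhood of a compact box, is a definable (restricted analytic) function; let $B := B_r$ denote the open ball and let $\log$ denote the (definable) local inverse on $\exp_G(B)$.

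Next, consider the definable family of subsets of $[0,1]$
\[
E_X := \{t \in [0,1] : \exp_G(tX)\,h \in S\}, \qquad X \in B \setminus \{0\}.
\]
For any such $X$, an open interval $(a,b) \subset E_X$ would give $\{\exp_G(sX)\cdot \exp_G(aX)\,h : s \in [0, b-a]\} \subset S$ via the one-parameter subgroup property, producing an exponential arc in $S$ and contradicting the hypothesis. So each $E_X$ has empty interior and hence, by o-minimality, is finite. The Uniform Finiteness Principle (\Cref{thm:unifbound}) then yields a constant $K$ with $|E_X| \le K$ for every $X \in B \setminus \{0\}$.

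Finally, set $N_h := \exp_G(B_{r/K}) \cdot h$, an open neighborhood of $h$. Suppose for contradiction that $N_h \cap S$ contains an arithmetic progression $\{h, gh, \ldots, g^{K}h\}$ of length $K+1$ starting at $h$ (any unboundedly large AP starting at $h$ in $N_h \cap S$ contains such a subprogression). From $gh \in N_h$ we get $g \in \exp_G(B_{r/K})$, so $X := \log g$ satisfies $\|X\| < r/K$. Set $X' := KX$; for each $i \in \{0,1,\ldots,K\}$ we have $iX \in B$ (as $\|iX\| < r$), so the one-parameter subgroup identity $\exp_G(iX) = g^i$, combined with injectivity of $\exp_G$ on $B$, gives $\exp_G(\tfrac{i}{K}X')\,h = g^i h \in S$. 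Therefore $\{i/K : 0 \le i \le K\} \subset E_{X'}$, so $|E_{X'}| \ge K+1$, contradicting $|E_{X'}| \le K$.

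The main obstacle is the potential mismatch between $\log(g^i)$ and $i\log(g)$ arising from $\exp_G$ not being globally injective (and not globally definable); the argument sidesteps this by choosing $N_h$ small enough (radius scaled by $1/K$) that $K\log g$ still lies in the injectivity ball $B$, keeping all the chart computations in the definable regime. Note the argument uses only that $S$ avoids exponential arcs starting at $h$, which is weaker than the full hypothesis.
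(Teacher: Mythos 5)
Your proof is correct and follows essentially the same approach as the paper's: restrict to a definable exponential chart (\Cref{lem:7.1}), observe that the resulting definable family $E_X\subset[0,1]$ has all fibers finite because $S$ avoids exponential arcs, and invoke the Uniform Finiteness Principle (\Cref{thm:unifbound}). The only real difference is that you shrink $N_h$ to $\exp_G(B_{r/K})h$ so that the rescaled vector $KX$ provably stays inside the injectivity ball $B$, which makes explicit a step the paper's proof leaves implicit when it simply asserts one can take $A\in B$ with $\exp_G(\tfrac{i}{m-1}A)=g^i$ for all $i$.
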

First, we will need a lemma concerning the definability of the Lie group exponentiation map.
\begin{lem}
\label{lem:7.1}
If $G$ is a definable group in the o-minimal structure $\mathbb{R}_{an,\exp}$, then there is an open ball $0\in B\subset \mathfrak{g}$ around $0$ such that $\exp_{G}|_B$ is definable in  $\mathbb{R}_{an,\exp}$, and a homeomorphism onto an open subset of $G$.
\end{lem}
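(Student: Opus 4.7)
The plan is to show that $\exp_G$ is real-analytic on a neighborhood of $0 \in \mathfrak{g}$, and then realize a sufficiently small restriction as a restricted analytic function, so that it is definable in $\mathbb{R}_{an,\exp}$ by fiat of the signature.

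First, I would upgrade Pillay's $t$-topology atlas on $G$ (\Cref{rem:Pillay}) to a real-analytic atlas near $\operatorname{id}_G$. Since $G$ is definable in $\mathbb{R}_{an,\exp}$, its chart maps and transition maps are definable; and since every definable function in $\mathbb{R}_{an,\exp}$ is real-analytic off a subset of strictly smaller dimension, shrinking the chart containing $\operatorname{id}_G$ and using left/right translation by a nearby analytic point to push analyticity to $\operatorname{id}_G$ lets us assume the multiplication map $m \colon G \times G \to G$ is real-analytic on an open neighborhood of $(\operatorname{id}_G, \operatorname{id}_G)$. This equips $G$ with the structure of a real-analytic Lie group, with Lie algebra $\mathfrak{g} := T_{\operatorname{id}_G} G \cong \mathbb{R}^{\dim G}$.

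Second, I would invoke the ODE characterization of the exponential: for $X \in \mathfrak{g}$ small, $\exp_G(X) = \gamma_X(1)$, where $\gamma_X$ solves $\dot\gamma(t) = (dL_{\gamma(t)}|_{\operatorname{id}_G})(X)$ with $\gamma(0) = \operatorname{id}_G$ and $L_g$ denoting left-translation. The right-hand side is jointly real-analytic in $(\gamma, X)$ on a neighborhood of $(\operatorname{id}_G, 0)$, so the analytic ODE existence theorem yields a real-analytic flow on an open neighborhood of $(t, X) = (0, 0) \in \mathbb{R} \times \mathfrak{g}$. In particular, there is an open ball $0 \in B_0 \subset \mathfrak{g}$ on which $\exp_G$ is real-analytic, and because $d\exp_G|_0 = \operatorname{id}_{\mathfrak{g}}$ the inverse function theorem makes it a homeomorphism onto an open subset of $G$ after shrinking to some $0 \in B \subset B_0$ with $\overline B \subset B_0$ compact.

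Third, an affine rescaling of $\mathfrak{g}$ placing $\overline B$ inside the open cube $(0,1)^{\dim G}$ converts each coordinate function of $\exp_G|_{\overline B}$ (read in a fixed definable analytic chart around $\operatorname{id}_G$) into the restriction to $[0,1]^{\dim G}$ of a function real-analytic on an open neighborhood of $[0,1]^{\dim G}$. These are exactly the restricted analytic functions included in the signature of $\mathbb{R}_{an,\exp}$ (\Cref{exmp:important}), so the graph of $\exp_G|_B$ is cut out by restricted-analytic equalities and inequalities and is therefore definable. The main obstacle I anticipate is the first step: bootstrapping from mere continuity to analyticity of multiplication near $\operatorname{id}_G$, which relies both on Pillay's atlas and on the ``piecewise analytic'' behaviour of $\mathbb{R}_{an,\exp}$-definable maps, combined with translation to move an analytic point onto $\operatorname{id}_G$.
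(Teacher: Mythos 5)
Your proposal is correct and follows essentially the same approach as the paper's proof: both use the piecewise analyticity of $\mathbb{R}_{an,\exp}$-definable functions together with Pillay's chart picture (the paper cites Pillay's Remark 2.6 for the translation argument you describe explicitly) to get an analytic group structure near $\operatorname{id}_G$, then invoke the analytic Cauchy/Cauchy--Kowalewski theory for the defining ODE of $\exp_G$ to get local analyticity, and finally observe that a local analytic function becomes a restricted analytic function after an affine change of variables. The paper's writeup is just more compressed than your three steps.
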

\begin{proof}
By van den Dries and Miller \cite[Corollary 6.12, (ii)]{DM94} and \cite[Theorem 8.8, (II)]{DM94}, every $\mathbb{R}_{an,\exp}$-definable function is analytic on some open set, so we can find a similar presentation to \Cref{rem:Pillay} where the transition maps, inverse map, and multiplication map of $G$ are analytic (see Pillay \cite[Remark 2.6]{Pillay}). By the Cauchy-Kowalewski theorem there exists an open neighborhood $B'$ of $0\in \mathfrak{g}$ such that $\exp_{G}|_{B'}$ is an analytic function, and since the exponential map of a Lie group is a homeomorphism on a sufficiently small ball around $0\in \mathfrak{g}$ to an open subset of $G$, we can ensure this property as well. Restricting $B'$ to a smaller ball $B$ makes $\exp_G|_B$ a restricted analytic function, and hence definable in $\mathbb{R}_{an,\exp}$.
\end{proof}
We are now ready to prove \Cref{prop:balls}.
\begin{proof}[Proof of \Cref{prop:balls}]
Let $0\in B\subset \mathfrak{g}$ be the open ball around $0$ such that $\exp_G:B\to G$ is a definable homeomorphism onto an open neighborhood of $\operatorname{id}_G$ guaranteed by \Cref{lem:7.1}. Take $N_h=\exp_{G}(B)h$. To show that this neighborhood works, let $\phi$ be the minimal complexity defining formula for $S$, and for $A\in B$ let $$\psi_{A,h}(t):\phi(\exp_G(At)h) \wedge 0\le t \le 1,$$
so that $$S_{\psi_{A,h}}:=\{t\in [0,1]:\exp_G(At)h\in S\}\subset [0,1].$$
We have $\operatorname{comp}(\psi_{A,h})=O_{G,\mathcal{F},\operatorname{comp}_{\mathcal{F}}(S)}(1)$ independent of the choice of $A$ and $h$, and if there is a size $m$ arithmetic progression $\{h,gh,\ldots,g^{m-1}h\}$ contained in $N_h$, taking $A\in B$ so that $\exp_G(\frac{i}{m-1}A)=g^{i}$ for $i=0,\ldots,m-1$, we see that $|S_{\psi_{A,h}}|\ge m$. However, because $S$ contains no exponential arcs, $S_{\psi_{A,h}}$
can't contain any intervals, and hence must be a finite collection of points. By \Cref{thm:unifbound}, $|S_{\psi_{A,h}}|=O_{G,\mathcal{F},\operatorname{comp}_{\mathcal{F}}(\psi)}(1)=O_{G,\mathcal{F},\operatorname{comp}_{\mathcal{F}}(S)}(1)$, independent of the choice of $A$, which is therefore an upper bound on the length of any arithmetic progression inside $N_h\cap S$ starting at $h$.
\end{proof}
\section{Bounding the set of bad translates}
\label{sec:selfirred}
Let $\mathbb{M}_\mathcal{F}$ be a fixed finitely generated o-minimal structure extending a dense linear order $(M,<)$, and $S\subset G$ a definable subset of a definable group. For notational convenience, we will take all constants to depend on $\mathcal{F}$ and $G$, and $\operatorname{comp}$ will be taken to be with respect to $\mathcal{F}$. In particular, we write $O_{\operatorname{comp}(S)}(1)$ instead of $O_{G,\mathcal{F},\operatorname{comp}_{\mathcal{F}}(S)}(1)$ throughout.

The following lemma gives us basic facts concerning the interaction between left-translates of $S$ and the notions of dimension and complexity.

\begin{lem}
\label{prop:basiccomplexity}
Let $\dim(S)=k$. Then
\begin{itemize}
    \item $gS$ is a definable set of dimension $k$ and $\operatorname{comp}(gS)=O_{\operatorname{comp}(S)}(1)$ independent of $g\in G$.
    \item $S\cap gS$ is definable, and $\operatorname{comp}(S\cap gS)=O_{\operatorname{comp}(S)}(1)$ independent of $g\in G$.
\end{itemize}
\end{lem}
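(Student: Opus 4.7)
The plan is to unwind definitions: produce explicit formulas of bounded complexity defining $gS$ and $S \cap gS$, treating $g$ merely as a parameter that contributes a single constant symbol to the defining formula.

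For the first bullet, I will fix a minimum-complexity formula $\phi_S(x)$ defining $S$, and let $\mu(x,y,z)$ denote the formula defining the graph of the multiplication $G \times G \to G$, whose complexity depends only on $G$ (and is absorbed into the suppressed dependence on $G,\mathcal{F}$). Then $gS$ is defined by
$$\phi_{gS}(x) \;:=\; \exists y\,\bigl(\mu(g,y,x)\wedge \phi_S(y)\bigr),$$
in which $g$ appears as a single constant symbol. Counting symbols gives $\operatorname{comp}(\phi_{gS}) = \operatorname{comp}(\phi_S) + O(1)$, uniformly in $g$. For the dimension, the left-multiplication map $x\mapsto gx$ restricts to a definable bijection $S\to gS$, and the basic dimension facts collected in \Cref{sec:dim} then give $\dim(gS) = \dim(S) = k$.

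For the second bullet, the intersection of definable sets is definable, and $S\cap gS$ is defined by $\phi_S(x)\wedge \phi_{gS}(x)$, which has complexity at most $\operatorname{comp}(\phi_S)+\operatorname{comp}(\phi_{gS})+O(1) = O(\operatorname{comp}(S))$, again uniformly in $g$.

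There is no substantive obstacle here; the lemma is essentially bookkeeping. The only mild subtlety is to verify that the formula $\mu$ encoding multiplication has complexity depending solely on $G$, which is guaranteed by the definition of a definable group and is already absorbed into the convention that constants depend on $G$ and $\mathcal{F}$.
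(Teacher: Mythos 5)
Your proof is correct and takes essentially the same route as the paper: exhibit an explicit defining formula for $gS$ with $g$ as a single parameter, conclude the complexity bound by symbol counting, deduce $\dim(gS)=\dim(S)$ from the definable bijection given by left-multiplication, and handle $S\cap gS$ by conjunction. The only cosmetic difference is that the paper abbreviates the formula as $\phi(g^{-1}x)$, while you spell it out via the graph $\mu$ of multiplication and an existential quantifier, which is the same construction made slightly more explicit.
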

\begin{proof}
Let $\phi(x)$ be a minimal complexity formula defining $S$. Then $\phi(g^{-1}x)$ defines $gS$, so $\operatorname{comp}(gS)=O_{\operatorname{comp}(S)}(1)$, and left multiplication by $g$ induces a definable bijection between $S$ and $gS$ so $\dim(gS)=\dim(S)=k$. The set $S\cap gS$ is definable by the formula $\phi(x)\wedge \phi(g^{-1}x)$, so $\operatorname{comp}(S\cap gS)=O_{\operatorname{comp}(S)}(1)$.
\end{proof}

Hence in particular $S\cap gS$ is definable, and the following definition makes sense.

\begin{defn}
Let $\operatorname{Bad}(S)\subset G$ be the subset $\{\operatorname{id}_G\ne g\in G: \dim(S\cap gS)=\dim(S)\}$.
\end{defn}
In this section, we prove the following.
\begin{prop}
\label{prop:boundedBadS}
If for all $h\in S$ there exists an open neighborhood $N_h$ of $h$ in $G$ such that $N_h\cap S$ does not contain unboundedly large arithmetic progressions starting at $h$, then $|\operatorname{Bad}(S)|=O_{\operatorname{comp}(S)}(1)$.
\end{prop}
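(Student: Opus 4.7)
\textbf{Proof plan for \Cref{prop:boundedBadS}.} It suffices to show $\operatorname{Bad}(S)$ is finite; the quantitative bound then follows from \Cref{thm:unifbound}, since $\operatorname{Bad}(S)$ is itself definable of complexity $O_{\operatorname{comp}(S)}(1)$ by applying \Cref{prop:definableloci} to the coordinate projection of $\{(g,s) \in G \times G : s \in S \cap gS\}$ onto the $g$-coordinate, keeping the locus where the fibers have dimension $k := \dim S$.

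Suppose for contradiction $\operatorname{Bad}(S)$ is infinite, so $\dim \operatorname{Bad}(S) \ge 1$. Consider the definable set
\[
T' = \bigl\{(z, g) \in S \times (G \setminus \{\operatorname{id}_G\}) : \exists \text{ open } N \subset G \text{ with } z \in N \text{ and } N \cap S = N \cap gS\bigr\}.
\]
For each $g \in \operatorname{Bad}(S)$, \Cref{prop:XYk} applied to $X = S$ and $Y = gS$ produces such $(z, N)$; since every point of $N \cap S$ also witnesses local agreement, the fiber $T'_g$ is a nonempty open subset of $S \cap gS$ of dimension $k$. So by \Cref{prop:definableloci} one has $\dim T' = \dim \operatorname{Bad}(S) + k \ge k + 1$, and projecting to $S$ (which has dimension $k$) yields some $z_0 \in S$ whose fiber $A_{z_0} := \{g \ne \operatorname{id}_G : (z_0, g) \in T'\}$ has dimension $\ge 1$. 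By \Cref{fact:contshrink} we can then extract a continuous definable injection $\gamma : (0,1) \to A_{z_0}$.

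The crux is a uniform-neighborhood step: there exist a subinterval $I \ni t_0$ of $(0,1)$ and an open neighborhood $N \ni z_0$ such that $N \cap S = N \cap \gamma(t) S$ for every $t \in I$. I would obtain this using the chart structure from \Cref{rem:Pillay} to choose a definable decreasing family $\{U_r\}_{r > 0}$ of open neighborhoods of $z_0$ with $\bigcap_r U_r = \{z_0\}$, then applying \Cref{fact:contshrink} to the definable function $r(t) := \sup\{r : U_r \cap S = U_r \cap \gamma(t) S\}$: pick $t_0$ at which $r$ is continuous, so that $N := U_{r(t_0)/2}$ works on a small subinterval $I \ni t_0$ by continuity.

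Now set $h := \gamma(t_0)^{-1} z_0 \in S$, and for $t \in I$ close to $t_0$ let $g_t := \gamma(t)^{-1} \gamma(t_0) \ne \operatorname{id}_G$ (nonidentity by injectivity of $\gamma$), which tends to $\operatorname{id}_G$ as $t \to t_0$. Given $m$ and an open neighborhood $N_h$ of $h$, by \Cref{thm:NST} we may further restrict $t$ toward $t_0$ so that $g_t$ has order greater than $m$. I claim $y_i := g_t^i h$ lies in $S$ for each $0 \le i \le m-1$, by induction on $i$: the identity $\gamma(t_0) y_{i-1} = (\gamma(t_0)\gamma(t)^{-1})^{i-1} z_0$ places $\gamma(t_0) y_{i-1}$ in $N$ for $t$ sufficiently close to $t_0$ (since $\gamma(t_0)\gamma(t)^{-1} \to \operatorname{id}_G$); combined with $y_{i-1} \in S$, the agreement $N \cap S = N \cap \gamma(t_0) S$ gives $\gamma(t_0) y_{i-1} \in N \cap S$, whence the agreement $N \cap S = N \cap \gamma(t) S$ yields $y_i = \gamma(t)^{-1}(\gamma(t_0) y_{i-1}) \in S$. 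By continuity each $y_i \to h$ as $t \to t_0$, so for $t$ sufficiently close the whole $m$-term progression $\{y_0, \ldots, y_{m-1}\}$ lies in $N_h \cap S$, contradicting the hypothesis on $S$. The main obstacle is the uniform-neighborhood step; without it the inductive iteration could stall because $\gamma(t_0) y_{i-1}$ might escape a shrinking agreement neighborhood before reaching $y_{m-1}$, and extracting regularity from the definable radius function via \Cref{fact:contshrink} is what prevents this collapse.
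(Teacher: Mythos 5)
Your strategy and the paper's are essentially the same in outline: extract from $\dim\operatorname{Bad}(S)\ge 1$ a one-parameter family of bad translates, fix a base point where the local agreement neighborhood can be taken uniform along the family, iterate the local agreement $m$ times to manufacture a length-$m$ progression, and invoke the no-small-torsion property (\Cref{thm:NST}) to ensure the progression is genuine. The genuine difference is how the one-parameter family is produced: you do a clean two-step dimension count on $T'$ --- fibers over $\operatorname{Bad}(S)$ have dimension $k$, so $\dim T'\ge k+1$, so some fiber $A_{z_0}$ over the $S$-projection has dimension $\ge 1$ --- which lets you fix the base point $z_0$ once and for all, while the paper works with the set $\Lambda$ of triples $(h,b,g)$ and lets $h(t)$ vary along the lifted curve. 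Your variant is a real simplification: with $z_0$ fixed, the iteration $y_i\mapsto y_{i+1}$ lives in a single translated neighborhood chain and the bookkeeping is lighter.

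Two technical points need attention, both already resolved implicitly by the paper's $B$-parametrization. First, $T'$ as you define it, with ``$\exists$ open $N$'', is not a first-order formula; you must quantify over a definable family of neighborhoods, for instance the boxes $z\gamma(F_b)$ indexed by $b\in B$ exactly as in the paper's $\Lambda$. This is harmless --- because boxes form a basis, one can replace the $N$ produced by \Cref{prop:XYk} with a box, and the fiber argument still gives $\dim T'_g=k$ for $g\in\operatorname{Bad}(S)$ --- but it must be said. Second, your uniform-neighborhood step rests on a single-scalar decreasing family $\{U_r\}_{r>0}$; in the general o-minimal setting the base $(M,<)$ is only a dense linear order, with no addition or scaling, so such a scalar parametrization of a basis at $z_0$ is not available for free (this is part of why the paper never introduces one). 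The fix is to keep the full box index $b\in B$, lift along the projection $(0,1)\times B\to(0,1)$ using \Cref{fact:definablelift} (as the paper does, after embedding $B$ into a definable group), apply the Monotonicity Theorem \Cref{fact:contshrink} separately to each of the $2\dim(G)$ coordinate functions $c_i'(t),d_i'(t)$ of the lifted $b(t)$ on a common subinterval, and set $b_0$ to the ``most restrictive'' box $(\max_t c_i'(t),\min_t d_i'(t))_i$, whose extrema are attained at the interval endpoints by monotonicity. With these two repairs your argument goes through.
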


\begin{lem}
\label{prop:BadSdef}
$\operatorname{Bad}(S)$ is definable of complexity $O_{\operatorname{comp}(S)}(1)$.
\end{lem}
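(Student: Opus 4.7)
The plan is to realize $\operatorname{Bad}(S)$ as the locus where a fiber dimension function takes its maximum value, and then invoke the fiber-dimension fact \Cref{prop:definableloci} to get both definability and a uniform complexity bound.

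Concretely, let $\phi(x)$ be a minimal-complexity formula defining $S$, so that $S \cap gS$ is defined by $\phi(x) \wedge \phi(g^{-1}x)$ as noted in \Cref{prop:basiccomplexity}. I would introduce the auxiliary definable set
\[
W := \{(x,g) \in G \times G : \phi(x) \wedge \phi(g^{-1}x)\} \subset G \times G,
\]
which has complexity $O_{\operatorname{comp}(S)}(1)$ by direct inspection of the defining formula. Let $L : G \times G \to G$ be projection onto the second factor. Then $L|_W^{-1}(g) = (S \cap gS) \times \{g\}$, so $\dim L|_W^{-1}(g) = \dim(S \cap gS)$ for every $g$.

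Now apply \Cref{prop:definableloci} with $k = \dim(S)$ to the set $W$ and the projection $L$: the locus
\[
T := \{g \in G : \dim L|_W^{-1}(g) = \dim(S)\}
\]
is definable of complexity $O_{\operatorname{comp}(S)}(1)$. Then $\operatorname{Bad}(S) = T \setminus \{\operatorname{id}_G\}$ is definable of complexity $O_{\operatorname{comp}(S)}(1)$, since removing a single point (defined by the formula $g = \operatorname{id}_G$ using the definable identity element) adds only bounded complexity.

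There is no real obstacle here; the proof is essentially a direct application of the fiber-dimension fact once the two-variable set $W$ is in place. The only subtlety worth checking is that $\dim(S)$ itself is a well-defined constant computable from $S$ (so that plugging it in as $k$ is legitimate), and that the identity element of $G$ is definable with bounded complexity, both of which are immediate from the setup of a definable group in $\mathbb{M}_{\mathcal{F}}$.
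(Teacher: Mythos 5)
Your proposal is correct and follows essentially the same route as the paper: form the two-variable definable set whose fibers over $g$ are $S\cap gS$, apply \Cref{prop:definableloci} with $k=\dim(S)$, and remove the identity. The only cosmetic differences are the order of the two $G$-factors and that the paper builds the condition $g\ne\operatorname{id}_G$ into the auxiliary set up front rather than subtracting a point at the end.
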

\begin{proof}
Let $\phi(x)$ be a minimal complexity formula defining $S$. The subset
$$T=\{(g,x):\operatorname{id}_G\ne g\in G, x\in S\cap gS\}\subset G\times G$$
is defined by the formula $\psi(g,x): g\ne \operatorname{id}_G \wedge \phi(x)\wedge \phi(g^{-1}x)$, and hence $\operatorname{comp}(T)=O_{\operatorname{comp}(S)}(1)$. The projection $L:T\to G$ onto the first factor of $G$ has the property that $L|_S^{-1}(g)=S\cap gS$, so we conclude by applying \Cref{prop:definableloci} with $k=\dim(S)$.
\end{proof}
\begin{lem}
\label{prop:isometric}
If $g\in \operatorname{Bad}(S)$, then there exists $z\in S$, and an open neighborhood $z\in N\subset G$ such that
$$g(N\cap S)=gN\cap S.$$
\end{lem}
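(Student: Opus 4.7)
The plan is to reduce the statement directly to \Cref{prop:XYk} applied to the pair $X=S$ and $Y=gS$, then translate the resulting open neighborhood back by $g^{-1}$.

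First, by the assumption $g\in \operatorname{Bad}(S)$ we have $\dim(S\cap gS)=\dim(S)=k$. By \Cref{prop:basiccomplexity}, $gS$ is definable of dimension $k$. Thus $S$ and $gS$ are two $k$-dimensional definable subsets of $M^d$ whose intersection has dimension $k$, so \Cref{prop:XYk} yields a point $z'\in S\cap gS$ and an open neighborhood $z'\in N'\subset G$ with
$$N'\cap S=N'\cap gS.$$

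Now set $z=g^{-1}z'$ and $N=g^{-1}N'$. Since $z'\in gS$ we can write $z'=gs$ for some $s\in S$, giving $z=s\in S$. Left multiplication by $g^{-1}$ is a homeomorphism of $G$ in the t-topology (by Pillay's structure theorem recalled in \Cref{rem:Pillay}), so $N$ is an open neighborhood of $z$. Finally we compute
$$g(N\cap S)=gN\cap gS=N'\cap gS=N'\cap S=gN\cap S,$$
where the first equality uses injectivity of left multiplication by $g$, the second uses $gN=N'$, the third is the conclusion of \Cref{prop:XYk}, and the last again uses $gN=N'$. This is the desired identity.

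The proof is essentially a bookkeeping step: the only content is the application of \Cref{prop:XYk}, and the only subtlety is ensuring the chosen $z$ lies in $S$ (not just in $S\cap gS$) and that left translation is continuous in the t-topology so that $N=g^{-1}N'$ is genuinely open. Both are immediate, so there is no real obstacle.
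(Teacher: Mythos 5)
Your proof is correct and uses essentially the same idea as the paper: both reduce the claim to \Cref{prop:XYk}. The only cosmetic difference is that the paper applies \Cref{prop:XYk} to the pair $(S, g^{-1}S)$, which gives the neighborhood $N$ directly with no back-translation, whereas you apply it to $(S, gS)$ and then translate by $g^{-1}$; the two are equivalent up to this trivial reindexing.
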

\begin{proof}
Let $X=S$ and $Y=g^{-1}S$. Because multiplication by $g$ is a definable bijection, $\dim(X)=\dim(Y)=k$ for some $k$.
Because $g\in \operatorname{Bad}(S)$ we have $\dim(S\cap g^{-1}S)=\dim(gS\cap S)=\dim(S)=k$, where the first equality follows since multiplication by $g$ is a definable bijection and the second equality follows by the definition of $\operatorname{Bad}(S)$. Hence applying \Cref{prop:XYk} gives the desired neighborhood $N$.
\end{proof}
We can now prove \Cref{prop:boundedBadS}. Before continuing, we give an informal description of the proof. Suppose that $\Bad(S)$ has dimension $\ge 1$. Let $B$ parametrize a basis of basic open sets $\{\gamma(F_b)\}_{b\in B}$ around $\operatorname{id}_G$. Then $\dim \Bad(S) \ge 1$ will imply that there are infinitely many triples $(h,b,g)\in S\times B \times G$ with $g\ne \operatorname{id}_G$ such that the neighborhood $h\gamma(F_b) \cap S$ around $h\in S$ is bijectively mapped to the neighborhood $gh\gamma(F_b)\cap S$ around $gh\in S$ through multiplication by $g$. Standard arguments let us find continuous paths $g(t),b(t),h(t)$, with $t$ in an interval $[x,y]$, of parameters satisfying this, with $g(t)$ injective. What amounts to a compactness argument shows we can take $b(t)$ to be a constant $b_0$ (we have to be careful since definable compactness is only considered in the context of o-minimal expansions of ordered groups, but fortunately we can use \Cref{fact:definablelift} and a monotonicity argument instead).

Now consider $t\in [x,y]$ very close to $x$. Multiplication by $g(x)$ bijectively maps $h(x)\gamma(F_{b_0})\cap S$ to $g(x)h(x)\gamma(F_{b_0})\cap S$, and multiplication by $g(t)^{-1}$ bijectively maps $g(t)h(t)\gamma(F_{b_0})\cap S$ to $h(t)\gamma(F_{b_0})$. Because there will be a large overlap between $h(x)\gamma(F_{b_0})$ and $h(t)\gamma(F_{b_0})$, and between $g(x)h(x)\gamma(F_{b_0})$ and $g(t)h(t)\gamma(F_{b_0})$, we will be able to find a slightly smaller open subset $U\subset h(x)\gamma(F_{b_0})$ such that $U\cap S$ is bijectively mapped to $g(x)U\cap S$ (this would happen for any subset), and $g(x)U\subset g(t)h(t)\gamma(F_{b_0})$ so that $g(x)U\cap S$ is bijectively mapped to $g(t)^{-1}g(x)U\cap S$ through multiplication by $g(t)^{-1}$. Since we will be able to take $U$ independent of $t$ provided $t$ is sufficiently close to $x$, we have a continuous path $g(t)^{-1}g(x)$ starting at $\operatorname{id}_G$ such that $U\cap S$ is bijectively mapped to $g(t)^{-1}g(x)U$ through multiplication by $g(t)^{-1}g(x)$. For $t$ sufficiently close to $x$ this implies that for any $m\ge 1$ the elements $h(x),g(t)^{-1}g(x)h(x),\ldots, (g(t)^{-1}g(x))^{m-1}h(x)$ all lie in $S$, which for $m$ sufficiently large violates the assumption that there is a neighborhood $N_{h(x)}$ of $h(x)$ such that $N_{h(x)}\cap S$ does not contain arbitrarily large arithmetic progressions starting at $h(x)$ unless $g(t)^{-1}g(x)$ is torsion of order $\le m$. But for $t$ sufficiently close to $x$ this can't happen by the ``no small torsion'' property \eqref{tag:NST}.
\begin{proof}[Proof of \Cref{prop:boundedBadS}]
Throughout the proof, the topology on any definable group or subset of a definable group will always be the ``t-topology'' (see \Cref{rem:Pillay}). Also we will frequently use without comment the fact that if a subset $N\subset G$ has $g(N\cap S)=gN\cap S$, and $N'\subset N$, then $g(N'\cap S)=gN'\cap S$.

If $\dim \operatorname{Bad}(S) =0$, then by \Cref{prop:BadSdef} we have $\operatorname{comp}(\operatorname{Bad}(S))=O_{\operatorname{comp}(S)}(1)$, so by \Cref{thm:unifbound} we have $|\operatorname{Bad}(S)|=O_{\operatorname{comp}(S)}(1)$. Hence it suffices to show that we do not have $\dim \operatorname{Bad}(S)\ge 1$.

Suppose by way of contradiction that $\dim \operatorname{Bad}(S) \ge 1$. Let $\gamma:\prod_{i=1}^{\dim(G)} (c_i,d_i)\to G$ be a homeomorphism to an open neighborhood around $\operatorname{id}_G\in G$, and let $z=(z_1,\ldots,z_{\dim(G)})\in \prod (c_i,d_i)$ be the point with $\gamma(z)=\operatorname{id}_G$. Let $$B_i=\{(c'_i,d'_i): c_i<c'_i<z_i<d'_i<d_i\}=(c_i,z_i)\times (z_i,d_i)\subset M^2,$$ and let $B=\prod B_i$. For $b=((c'_1,d'_1),\ldots, (c'_{\dim(G)},d'_{\dim(G)}))\in \prod B_i$ denote $F_{b}$ for the product of open intervals $\prod_{i=1}^{\dim(G)}(c'_i,d'_i)\subset M^{\dim(G)}$.

Consider the definable set $$\Lambda=\{(h,b,g)\in S\times B \times G:g\ne \operatorname{id}_G, \text{ and }h,gh\in S,\text{ and }g(h\gamma(F_{b})\cap S)=gh\gamma(F_{b})\cap S\}.$$
\begin{clm}
There is a continuous definable map $(h(t),b_0,g(t)):[x,y]\to \Lambda$, with $b_0\in B$ a constant, $g(t)$ injective, and $[x,y]\subset M$ an interval.
\end{clm}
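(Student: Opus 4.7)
The plan is to proceed in two stages. First I will extract a continuous definable curve $(h(t), g(t))$ in the $b$-oblivious projection $\Lambda' := \pi_{h,g}(\Lambda) \subset S \times G$ with $g$ injective, bypassing the $b$-coordinate entirely. Then I will use a dimension-counting argument, exploiting the downward closedness of the $b$-fibers of $\Lambda \to \Lambda'$, to freeze $b$ to a single constant $b_0$ on a sub-interval.

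For the first stage, I would first observe $\pi_G(\Lambda') \supset \operatorname{Bad}(S)$: given $g \in \operatorname{Bad}(S)$, \Cref{prop:isometric} produces $z \in S$ and an open neighborhood $z \in N$ with $g(N \cap S) = gN \cap S$, so choosing $b \in B$ small enough that $z\gamma(F_b) \subset N$ gives $(z, b, g) \in \Lambda$. Since $\dim \operatorname{Bad}(S) \ge 1$, some coordinate projection $\pi_i : \operatorname{Bad}(S) \to M$ has $1$-dimensional image, which contains an interval $[x, y] \subset M$. Applying \Cref{fact:definablelift} to the composition $\pi_i \circ \pi_G : \Lambda' \to M$ (whose domain $\Lambda' \subset G \times G$ lies in a definable group) with $\psi$ the inclusion of $[x, y]$, I obtain a definable section $t \mapsto (h(t), g(t))$ of $[x, y]$ into $\Lambda'$; since $\pi_i(g(t)) = t$ by construction, the $g$-coordinate is automatically injective. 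Applying the Monotonicity Theorem (\Cref{fact:contshrink}) coordinate-wise in the definable charts from \Cref{rem:Pillay}, I shrink $[x, y]$ to a sub-interval on which both $h$ and $g$ are continuous.

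For the second stage, I would define the definable set
\[
\Theta := \{(t, b) \in [x, y] \times B : (h(t), b, g(t)) \in \Lambda\}.
\]
A direct set-theoretic check (multiplying by $g^{-1}$) shows that if $(h, b, g) \in \Lambda$ and $b' \in B$ with $F_{b'} \subset F_b$, then $(h, b', g) \in \Lambda$ as well. Hence every $t$-fiber of $\Theta \to [x, y]$ is nonempty and downward closed in $B$, so it contains a nonempty open subset of $B$ and has full dimension $\dim B = 2 \dim G$. By \Cref{prop:definableloci}, $\dim \Theta = 1 + 2 \dim G$. Applying \Cref{prop:definableloci} to $\pi_B : \Theta \to B$ and writing $R_k := \{b \in B : \dim \Theta_b = k\}$ for $\Theta_b := \{t : (t, b) \in \Theta\} \subset [x, y]$ (so $k \in \{0, 1\}$), we get $\dim \Theta = \max_k (\dim R_k + k)$. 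If $R_1 = \emptyset$, then $\dim \Theta \le \dim R_0 \le \dim B = 2 \dim G$, contradicting $\dim \Theta = 1 + 2 \dim G$. So there exists $b_0 \in R_1$, and $\Theta_{b_0}$ is a $1$-dimensional definable subset of $[x, y] \subset M$, hence contains a sub-interval $[x', y']$ by o-minimality. Restricting the map to $[x', y']$ and freezing $b = b_0$ yields the desired $(h(t), b_0, g(t)) : [x', y'] \to \Lambda$.

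The hard part will be the $b$-constancy: naively the needed $b$ could vary with $t$ without any single $b_0$ being valid on an interval. The downward-closedness observation is the key, promoting the a priori varying $b$-choices into a full-dimensional definable set $\Theta$; \Cref{prop:definableloci} then forces some $b_0$-fiber to be $1$-dimensional, i.e., to contain a nontrivial interval of $t$'s.
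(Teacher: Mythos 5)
Your proof is correct, and it takes a genuinely different route for the step that is actually subtle here, namely freezing the box-parameter $b$ to a single constant. In the paper, the full triple $(h(t),b(t),g(t))$ is lifted at once (this requires the auxiliary embedding $B \hookrightarrow G\times G$ via $\gamma$ so that \Cref{fact:definablelift} applies), the Monotonicity Theorem is then used coordinatewise on the $2\dim G$ components of $b(t)$, and $b_0$ is taken to be the ``inner envelope'' $((\max c_i'(t),\min d_i'(t)))_i$, whose validity rests on the same downward-closedness of $\Lambda$ in $b$ that you identify. You instead lift only the pair $(h(t),g(t))$ in $\Lambda'\subset G\times G$ (so you need no artificial embedding of $B$) and then treat $b$ by pure dimension counting on $\Theta\subset [x,y]\times B$: downward closedness in $B$ makes every $t$-fiber open and full-dimensional, forcing $\dim\Theta = 1+2\dim G$ by \Cref{prop:definableloci}, and then a second application of \Cref{prop:definableloci} to the $B$-projection forces some $b_0$-fiber to be one-dimensional, hence to contain an interval. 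Your argument is slightly more abstract (it uses only downward closedness and never touches the individual components of $b$), while the paper's is more constructive (it exhibits $b_0$ explicitly via extremal endpoints); both ultimately hinge on the same propagation property $g(N\cap S)=gN\cap S \Rightarrow g(N'\cap S)=gN'\cap S$ for $N'\subset N$.
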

\begin{proof}
Because $\{h\gamma(F_{b}):b\in B\}$ is a basis of neighborhoods of $h\in G$, by \Cref{prop:isometric} the projection $\Lambda \to G$ has image containing $\Bad(S)$, and since $\dim \Bad(S) \ge 1$ one of the coordinate projections of the image of $\Lambda$ in $G$ to $M$ contains an interval $I'$. Denote the composite projection $\phi:\Lambda \to M$ and the inclusion $g'(t):I'\to \phi(\Lambda)$. Note that $\Lambda\subset S\times B \times G$, which can be embedded into the definable group $G\times G\times G \times G$ by including $B\to G\times G$ via the map $((c_1',d_1'),\ldots,(c_{\dim(G)}',d_{\dim(G)}'))\mapsto (\gamma(c_1',\ldots,c_{\dim(G)}'),\gamma(d_1',\ldots,d_{\dim(G)}'))$. Therefore we can apply \Cref{fact:definablelift} to obtain a lift $(h(t),b(t),g(t)):I'\to \Lambda$ of $g'(t)$ along $\phi$, and $g(t)$ is injective since $g'(t)$ is injective. By \Cref{fact:contshrink} we can find a subinterval $[x,y]\subset I'$ so that this map is continuous, and the $2\dim(G)$ components of the map $b(t)=((c'_1(t),d'_1(t)),\ldots,(c'_{\dim(G)}(t),d'_{\dim(G)}(t)))$ are monotone. Let $$b_0=((\max c'_1(t),\min d'_1(t)),\ldots,(\max c'_{\dim(G)}(t),\min d'_{\dim(G)}(t)))$$ (which exists since the extremes of $c'_i(t)$ and $d'_i(t)$ are attained at $x$ and $y$). Then because $h(x)\gamma(F_{b_0})\subset h(x)\gamma(F_{b(t)})$ for all $t$, we have $(h(t),b_0,g(t))\in \Lambda$ for all $t\in I$.
\end{proof}

\begin{clm}
\label{clm:existsU}
There is a sub-interval $[x,y']\subset [x,y]$ and an open neighborhood
$h(x)\in U$ such that $g(t)^{-1}g(x)(U\cap S)=g(t)^{-1}g(x)U\cap S$ for $t\in [x,y']$.
\end{clm}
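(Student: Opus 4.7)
The plan is to set $U=h(x)\gamma(F_{b'})$ for a strictly smaller box $F_{b'}\subsetneq F_{b_0}$ with closure $\overline{F_{b'}}\subset F_{b_0}$, and to reduce the claimed identity to the single containment $g(x)U\subset g(t)h(t)\gamma(F_{b_0})$ holding for all $t$ in some subinterval $[x,y']\subset[x,y]$. Since $U\subset h(x)\gamma(F_{b_0})$, the $\Lambda$-property at $(h(x),b_0,g(x))$ already gives $g(x)(U\cap S)=g(x)U\cap S$; once the displayed containment holds, the $\Lambda$-property at $(h(t),b_0,g(t))$ together with multiplication by $g(t)^{-1}$ upgrades this to $g(t)^{-1}g(x)(U\cap S)=g(t)^{-1}g(x)U\cap S$. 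Translating the containment via left-multiplication by $(g(t)h(t))^{-1}$ and setting $\psi(t):=(g(t)h(t))^{-1}g(x)h(x)$, a definable continuous map $[x,y]\to G$ with $\psi(x)=\operatorname{id}_G$, the task reduces to the tube-lemma-style claim that there exists $y'>x$ with $\psi(t)\gamma(\overline{F_{b'}})\subset \gamma(F_{b_0})$ for every $t\in[x,y']$.

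To establish this, I would argue by contradiction. If no such $y'$ exists, the definable set $D:=\{t\in[x,y]:\psi(t)\gamma(\overline{F_{b'}})\not\subset \gamma(F_{b_0})\}$ accumulates at $x$ from the right while $x\notin D$, so by o-minimality $D$ contains an interval $(x,y_0)$. For each $t\in(x,y_0)$ there is a witness $s(t)\in \overline{F_{b'}}$ with $\psi(t)\gamma(s(t))\notin \gamma(F_{b_0})$. To produce $s(t)$ definably, I would embed the witness set
\[
W:=\{(g(t),\gamma(s)):t\in(x,y_0),\,s\in\overline{F_{b'}},\,\psi(t)\gamma(s)\notin\gamma(F_{b_0})\}
\]
into the definable group $G\times G$ (legitimate because $g$ is injective from Claim 8.4 and $\gamma$ is an embedding) and apply \Cref{fact:definablelift} to its first-coordinate projection $W\to g((x,y_0))$. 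This yields a definable map $t\mapsto s(t)$, and \Cref{fact:contshrink} applied coordinate-wise then lets me pass to a further subinterval on which $s(t)$ is continuous with each coordinate either constant or strictly monotone.

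The contradiction then comes from a limiting argument: since each coordinate $s_i(t)\in[c'_i,d'_i]$ is bounded and monotone, the one-sided limit $s^*:=\lim_{t\to x^+}s(t)$ exists in $\overline{F_{b'}}\subset F_{b_0}$, and continuity of $\psi$ and $\gamma$ yields $\psi(t)\gamma(s(t))\to \gamma(s^*)\in \gamma(F_{b_0})$. Openness of $\gamma(F_{b_0})$ then forces $\psi(t)\gamma(s(t))\in \gamma(F_{b_0})$ for $t$ close enough to $x$, contradicting the defining property of $s(t)$. The main obstacle is precisely this tube-lemma-type step, since a general o-minimal structure lacks metric compactness; the substitute I rely on is the combination of definable choice on the witness set together with the monotonicity theorem, which together force existence of the one-sided limit $s^*$ that produces the contradiction.
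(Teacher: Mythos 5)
Your proof is correct, and the reduction to the containment $g(x)U\subset g(t)h(t)\gamma(F_{b_0})$ is exactly what the paper does, but you take a genuinely harder route to establish that containment. You fix the small box $F_{b'}$ \emph{first} and then try to find $y'$, which turns the statement into a tube-lemma about $[x,y']\times\overline{F_{b'}}$; since $\overline{F_{b'}}$ need not be compact in a general o-minimal $\mathbb{M}_{\mathcal{F}}$, you are forced into the definable-choice-plus-monotonicity-plus-limit machinery as a substitute for compactness. The paper avoids all of this by not fixing the box in advance: the set
$$V=\left\{(t,w)\in[x,y]\times F_{b_0}: (g(t)h(t))^{-1}g(x)h(x)\gamma(w)\in\gamma(F_{b_0})\right\}$$
is open (a continuous preimage of the open set $\gamma(F_{b_0})$) and contains the single point $(x,z)$ where $\gamma(z)=\operatorname{id}_G$; since products of an interval with a box form a basis of the product topology, one can immediately pick a box $[x,y']\times F_{b_0'}\subset V$ containing $(x,z)$, and set $U=h(x)\gamma(F_{b_0'})$. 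By choosing $F_{b_0'}$ and $y'$ together around a single point, there is nothing left to compactify. Your argument buys a slightly stronger conclusion (the containment holds with $\overline{F_{b'}}$ in place of $F_{b'}$), but at the cost of reproving a piece of definable compactness that the simpler basis-of-product-topology observation renders unnecessary; the two-line deduction of the claim from the containment using the $\Lambda$-property at $(h(x),b_0,g(x))$ and $(h(t),b_0,g(t))$ is identical to the paper's.
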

\begin{proof}
Consider the open subset $V\subset [x,y]\times F_{b_0}$ of pairs $(t,w)$ such that $(g(t)h(t))^{-1}g(x)h(x)\gamma(w)\in \gamma(F_{b_0})$. Then since boxes form a basis of the topology of $[x,y]\times F_{b_0}$, and $V$ contains some point of the form $(x,w)$ (in fact it contains every point of the form $(x,w)$), we can find a box $[x,y']\times F_{b_0'}$ such that $[x,y']\times F_{b_0'}\subset V$, and we can take $U=h(x)\gamma(F_{b_0'})$. Then by construction, $g(x)U\subset g(t)h(t)\gamma(F_{b_0})$ for all $t\in [x,y']$.
Because $U\subset h(x)\gamma(F_{b_0})$, we have $g(x)(U\cap S)=g(x)U\cap S$. Because $g(t)^{-1}(W\cap S)=g(t)^{-1}W\cap S$ for $W=g(t)h(t)\gamma(F_{b_0})$ and $g(x)U\subset g(t)h(t)\gamma(F_{b_0})$, we have $g(t)^{-1}(g(x)U\cap S)=g(t)^{-1}g(x)U \cap S$. Hence
$$g(t)^{-1}g(x)(U\cap S)=g(t)^{-1}(g(x)U\cap S)=g(t)^{-1}g(x)U \cap S.$$
\end{proof}
Returning to the proof of \Cref{prop:boundedBadS}, consider the continuous function \begin{align*}a_m(t):[x,y']&\to G^m\\
t&\mapsto (h(x),(g(t)^{-1}g(x))h(x),\ldots,(g(t)^{-1}g(x))^{m-1}h(x)).\end{align*}
The following claim concerning $a_m(t)$ will allow us to construct unboundedly large arithmetic progressions in $N_{h(x)}\cap S$ starting at $h(x)$, contradicting the definition of $N_{h(x)}$.
\begin{clm}There is an open neighborhood $x\in \Gamma_m\subset [x,y']$ such that $a_m(\Gamma_m)\subset (N_{h(x)}\cap S)^m$.
\end{clm}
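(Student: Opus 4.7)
The plan is to combine the bijectivity property from \Cref{clm:existsU} with continuity of $a_m$ at $t=x$ and an induction on the exponent. First, I would reduce to the case $U\subset N_{h(x)}$ by observing that the identity $g(t)^{-1}g(x)(U\cap S)=g(t)^{-1}g(x)U\cap S$ is stable under shrinking $U$: if $U'\subset U$, writing $g=g(t)^{-1}g(x)$, any element $gs\in gU'\cap S\subset gU\cap S=g(U\cap S)$ forces $s\in U\cap S$, and $gs\in gU'$ forces $s\in U'$, so $gs\in g(U'\cap S)$; the reverse inclusion is immediate. Replacing $U$ by $U\cap N_{h(x)}$, I may therefore assume outright that $U\subset N_{h(x)}$, so that any containment $z\in U\cap S$ automatically gives $z\in N_{h(x)}\cap S$.

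Next, since $a_m(x)=(h(x),\ldots,h(x))\in U^m$ and $U^m$ is open in $G^m$ (in the $t$-topology), continuity of $a_m$ at $x$ yields an open neighborhood $\Gamma_m\subset[x,y']$ of $x$ with $a_m(\Gamma_m)\subset U^m$. Unpacking, this says that for every $t\in\Gamma_m$ and every $0\le i\le m-1$, the point $(g(t)^{-1}g(x))^i h(x)$ lies in $U\subset N_{h(x)}$; this handles the $N_{h(x)}$ part of the desired containment simultaneously for all $m$ coordinates.

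Finally, I would establish membership in $S$ by induction on $i$, for $t\in\Gamma_m$. The base case $i=0$ is $h(x)\in S$, which is part of the hypothesis that $(h(t),b_0,g(t))\in\Lambda$. For the inductive step, write $g=g(t)^{-1}g(x)$ and assume $g^{i-1}h(x)\in U\cap S$; applying \Cref{clm:existsU} gives $g^i h(x)=g\cdot g^{i-1}h(x)\in g(U\cap S)=gU\cap S\subset S$, and combining this with $g^i h(x)\in U$ from the previous paragraph upgrades it to $g^i h(x)\in U\cap S\subset N_{h(x)}\cap S$. Iterating through $i=m-1$ gives $a_m(\Gamma_m)\subset(N_{h(x)}\cap S)^m$, as required. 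The main conceptual step is that the bijection from \Cref{clm:existsU} must be applied repeatedly, and each iterate must remain in $U$ for the next application to be legal; this is exactly what continuity of $a_m$ guarantees uniformly in $i\le m-1$ once $\Gamma_m$ is chosen small enough, and it is the only place where the potentially growing exponent $m$ enters (through the $m$-fold continuity condition, not through any subsequent torsion bound, which will be handled in the next step).
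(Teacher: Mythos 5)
Your proposal is correct and follows essentially the same approach as the paper: you use the neighborhood $U$ from \Cref{clm:existsU} together with continuity of $a_m$ at $t=x$ to land all $m$ coordinates in $U\cap N_{h(x)}$, and then propagate membership in $S$ across the orbit via the translate identity; the paper phrases this last step as a maximal-$k$ contradiction rather than a forward induction, and simply intersects $U$ with $N_{h(x)}$ rather than shrinking $U$ first, but these are cosmetic differences.
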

\begin{proof}
Let $U$ be as in \Cref{clm:existsU}.
Because $a_m(x)\in (N_{h(x)}\cap U)^m$, by continuity we can find an open neighborhood $x\in \Gamma_m\subset [x,y']$ such that $a_m(t)\subset (N_{h(x)}\cap U)^m$ for all $t\in [x,y']$. We claim that this implies $a_m(t)\in S^m$ as well.  Indeed, let $k\in [0,m-1]$ be the largest integer such that $(g(t)^{-1}g(x))^ih(x)\in S$ for $0\le i \le k$ (this is well-defined since $h(x)\in S$). If $k<m-1$, then since $(g(t)^{-1}g(x))^kh(x)\in U\cap S$, we have
$$(g(t)^{-1}g(x))^{k+1}h(x)\in (g(t)^{-1}g(x))(U\cap S)=g(t)^{-1}g(x)U\cap S\subset S,$$
contradicting the maximality of $k$. Hence $k=m-1$, which shows $a_m(t)\in S^m$ as desired.
\end{proof}
Returning to the proof of \Cref{prop:boundedBadS}, for any $t\in \Gamma_m$, the coordinates of $a_m(t)$ exhibits a length $m$ arithmetic progression lying in $N_{h(x)}\cap S$ provided $g(t)^{-1}g(x)$ is not torsion of order $\le m$. Note that because $g$ is injective, we have $g(t)^{-1}g(x)\ne \operatorname{id}_G$ for any $x\ne t\in \Gamma_m$.

Because $G$ has the ``no small torsion'' property (\ref{tag:NST}) by \Cref{thm:NST}, there exists an open neighborhood $\operatorname{id}_G\in \widehat{N}_m\subset G$ such that $\widehat{N}_m\setminus \{\operatorname{id}_G\}$ contains no torsion of order $\le m$. By continuity, there is a subinterval $x\in I_m\subset \Gamma_m$ such that $g(t)^{-1}g(x)\in \widehat{N}_m$ for $t\in I_m$ and since $g(t)$ is injective, $g(t)^{-1}g(x)\in \widehat{N}_m\setminus \{\operatorname{id}_G\}$ for $x\ne t\in I_m$. Hence for $x\ne t\in I_m$ we have $g(t)^{-1}g(x)$ is not torsion of order $\le m$, so $a_m(t)$ exhibits a length $m$ arithmetic progression in $N_{h(x)}\cap S$ starting at $h(x)$. Since this is true for all $m$, we see that $N_{h(x)}\cap S$ contains unboundedly large arithmetic progressions starting at $h(x)$, which contradicts the definition of $N_{h(x)}$.
\end{proof}
\section{Bounding the size of arithmetic progressions}
We work with the same conventions as in \Cref{sec:selfirred}. In this section we prove the following proposition, showing that subsets $S\subset G$ which locally avoid unboundedly large arithmetic progressions also avoid unboundedly large arithmetic progressions globally.

\begin{prop}
\label{prop:boundedAP}
Suppose for all $h\in S$ there exists an open neighborhood $N_h$ which does not contain unboundedly large arithmetic progressions starting at $h$. Then there exists an $r=O_{\operatorname{comp}(S)}(1)$ such that for all $g\in G$, either $g$ is torsion of order $\le r$, or $\{h,gh,\ldots,g^{r-1}h\}\not \subset S$ for all $h\in S$. In particular, $S$ does not contain size $r$ arithmetic progressions.
\end{prop}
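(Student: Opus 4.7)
The plan is to prove the first half of the proposition (the bound for each step $g$) by induction on $\dim(S)$; the ``in particular'' clause then follows, after replacing $r$ by $r+1$ if necessary to rule out the edge case where $g$ is torsion of order exactly $r$. For the base case $\dim(S) = 0$, the Uniform Finiteness Principle (\Cref{thm:unifbound}) gives $|S| = O_{\operatorname{comp}(S)}(1)$, and any size-$r$ AP in $S$ has $r$ distinct elements, so $r > |S|$ suffices.

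For the inductive step, the key tool is \Cref{prop:boundedBadS}, which yields $|\operatorname{Bad}(S)| \le C$ with $C = O_{\operatorname{comp}(S)}(1)$. I define for $m \ge 1$ the iterated intersection
$$E_m(g) := S \cap g^{-1}S \cap g^{-2}S \cap \cdots \cap g^{-m}S,$$
which is exactly the set of starting points $h$ of length-$(m+1)$ APs $\{h, gh, \ldots, g^m h\} \subset S$ with step $g$. The main claim is that for $m > C$ and $g$ not torsion of order $\le m$, $\dim E_m(g) < \dim S$. Indeed, $E_m(g) \subset S \cap g^{-i}S$ for each $1 \le i \le m$, so if $\dim E_m(g) = \dim S$ then all of the $m$ distinct non-identity elements $g^{-1}, \ldots, g^{-m}$ lie in $\operatorname{Bad}(S)$, contradicting $m > C$.

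Setting $m = C+1$, I then apply the inductive hypothesis to $E_m(g)$: as $g$ varies, $\{E_m(g)\}_g$ is a definable family of complexity $O_{\operatorname{comp}(S)}(1)$ uniformly in $g$, and $E_m(g) \subset S$ inherits the local hypothesis of \Cref{prop:boundedAP} (for $h \in E_m(g) \subset S$, the neighborhood $N_h$ witnessing the hypothesis for $S$ satisfies $N_h \cap E_m(g) \subset N_h \cap S$). This produces $r_m = O_{\operatorname{comp}(S)}(1)$ such that for $g$ not torsion of order $\le r_m$, $E_m(g)$ contains no length-$r_m$ AP with step $g$. Setting $r = r_m + m$, a hypothetical length-$r$ AP $\{h_0, gh_0, \ldots, g^{r-1}h_0\} \subset S$ with $g$ not torsion of order $\le r$ forces $g^i h_0 \in E_m(g)$ for $0 \le i \le r_m - 1$ (since $g^j(g^i h_0) = g^{i+j}h_0 \in S$ for $0 \le j \le m$), yielding a length-$r_m$ AP in $E_m(g)$ with step $g$ and contradicting the inductive hypothesis.

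The main obstacle is the bookkeeping of uniform complexity bounds across the induction: one needs $\operatorname{comp}(E_m(g))$ to be uniformly $O_{\operatorname{comp}(S)}(1)$ independently of $g$ (which follows because $E_m(g)$ is cut out by a fixed formula with $g$ as a parameter, together with \Cref{prop:basiccomplexity}), and the inductive constants to remain $O_{\operatorname{comp}(S)}(1)$ through all $\dim(G)$ iterations of the induction (which they do since $\dim G$ is a fixed constant and each induction step applies \Cref{prop:boundedBadS} at a set of bounded complexity).
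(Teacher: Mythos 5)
Your proof is correct and follows essentially the same strategy as the paper's: induction on $\dim(S)$, using \Cref{prop:boundedBadS} to drop the dimension by intersecting $S$ with translates $g^{-i}S$, then applying the inductive hypothesis (with complexity controlled via \Cref{prop:basiccomplexity}). The only cosmetic difference is that the paper chooses a single index $i \in \{1,\ldots,m+1\}$ with $g^i \not\in \operatorname{Bad}(S)$ and works with $S \cap g^{-i}S$, whereas you package the same pigeonhole into the iterated intersection $E_m(g) = S \cap g^{-1}S \cap \cdots \cap g^{-m}S$; both keep the complexity uniformly $O_{\operatorname{comp}(S)}(1)$ and give the same bound up to constants.
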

\begin{proof}
We induct on the dimension $k$ of $S$. For $k=0$, $|S|=O_{\operatorname{comp}(S)}(1)$ by \Cref{thm:unifbound} and the result is trivial.

Now, assume $k>0$ and the result is true for all smaller $k$. By \Cref{prop:basiccomplexity}, for all $x\in G$ we have $\operatorname{comp}(S\cap xS)=O_{\operatorname{comp}(S)}(1)$, so by the inductive hypothesis there exists an $r'=O_{\operatorname{comp}(S)}(1)$ such that if $\dim(S\cap xS)\le k-1$, then $\{h,gh,\ldots,g^{r'-1}h\}\not\subset S\cap xS$ whenever $g$ is not torsion of order $\le r'$.

By \Cref{prop:boundedBadS}, there exists an $m=O_{\operatorname{comp}(S)}(1)$ with $|\operatorname{Bad}(S)|\le m$.

We claim that for $r=m+r'$, if $g$ is not torsion of order $\le r$ then we have $\{h,gh,\ldots,g^{r-1}h\}\not\subset S$. Indeed, suppose that $\{h,gh,\ldots,g^{r-1}h\}\subset S$. As $|\operatorname{Bad}(S)|\le m$ and $g$ is not torsion of order $\le m+1$, there exists $i\in \{1,\ldots,m+1\}$ such that $g^i \not \in \operatorname{Bad}(S)$. Hence $\dim(S\cap g^{-i}S)=\dim(g^iS\cap S)\le k-1$ since multiplication by $g^i$ is a definable bijection, so $\{x,gx,\ldots,g^{r'-1}x\}\not\subset S\cap g^{-i}S$. But this is false since we have both $\{x,gx,\ldots,g^{r'-1}x\}\subset S$ and $\{g^ix,g^{i+1}x,\ldots,g^{r'-1+i}x\}\subset S$.
\end{proof}

\section{Proof of \Cref{thm:ominimaldimension}}
We work with the same conventions as in \Cref{sec:selfirred}. In this section we prove \Cref{thm:ominimaldimension}. We have to first prove the following result concerning ``self-irreducible'' sets.
\begin{defn}
Say that $S$ is self-irreducible if for all $\operatorname{id}_G\ne g\in G$ we have $\dim(S\cap gS)<\dim(S)$.
\end{defn}
\begin{prop}
\label{prop:breakup}
If for all $h\in S$ there exists an open neighborhood $N_h$ which does not contain unboundedly large arithmetic progressions starting at $h$, then there exists $C=O_{\operatorname{comp}(S)}(1)$ such that we can write $S=S_1\cup \ldots \cup S_C$ with $S_i$  definable and self-irreducible, and $\operatorname{comp}(S_i)=O_{\operatorname{comp}(S)}(1)$ for all $i$.
\end{prop}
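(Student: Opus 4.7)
The plan is to induct on $k := \dim(S)$. The base case $k = 0$ is immediate from \Cref{thm:unifbound}: $|S| = O_{\operatorname{comp}(S)}(1)$, and since left multiplication in $G$ is free, each singleton is self-irreducible.

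For the inductive step with $k \geq 1$, the starting input is \Cref{prop:boundedBadS}, which gives $\Bad(S) = \{g_1, \ldots, g_m\}$ with $m = O_{\operatorname{comp}(S)}(1)$. For each $g_i$ define the ``local equality'' locus
\[
L_i := \{x \in S : \exists \text{ open } N \ni x \text{ with } N \cap S = N \cap g_iS\},
\]
which has complexity $O_{\operatorname{comp}(S)}(1)$ (it coincides with $S \setminus \overline{(S\setminus g_iS)\cup(g_iS\setminus S)}$) and, by \Cref{prop:XYk}, dimension $k$. I then partition $S$ into at most $2^m$ definable pieces indexed by $T \subseteq \{1,\ldots,m\}$ by $S_T := (\bigcap_{i \in T} L_i) \cap (S \setminus \bigcup_{i\notin T} L_i)$, each of complexity $O_{\operatorname{comp}(S)}(1)$. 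Pieces $S_T$ with $\dim(S_T) < k$ are decomposed directly via the inductive hypothesis. For pieces $S_T$ with $\dim(S_T) = k$, I would either show self-irreducibility directly or iterate the same construction within $S_T$ using its (a fortiori smaller) bad set.

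The key observation supporting the iteration is that for any full-dimensional definable subset $P \subseteq S$, $\Bad(P) \subseteq \Bad(S)$: if $g \in \Bad(P)$ then $\dim(S \cap gS) \geq \dim(P \cap gP) = \dim P = k$, so $g$ is one of the $g_i$. Moreover, applying \Cref{prop:XYk} first to $S_T, gS_T$ (producing a point $z$ with $N \cap S_T = N \cap gS_T$), then to $S_T \subseteq S$ (transferring to a nearby $z'$ where $N' \cap S = N' \cap S_T$), and finally to $gS_T \subseteq gS$, collapses all four sets to agree on a common neighborhood, which forces $z \in L_i$ and hence $i \in T$. Thus each refinement step keeps $\Bad$ inside a shrinking subset of $\{g_1,\ldots,g_m\}$, and the number of rounds required is bounded by $m = O_{\operatorname{comp}(S)}(1)$, giving a final decomposition into $O_{\operatorname{comp}(S)}(1)$ pieces of bounded complexity.

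The main obstacle is ruling out that a bad translate $g_i$ with $i \in T$ persists in $\Bad(S_T)$, which would stall the iteration. Here the hypothesis on arithmetic progressions enters via \Cref{prop:boundedAP}: from the local identification $N \cap S_T = N \cap g_iS_T$ and its translate at $g_i^{-1}z$, one iterates the local bijection $g_i : N\cap S \to g_iN \cap S$ to produce the orbit $z, g_i^{-1}z, g_i^{-2}z, \ldots$ inside $S$, which, by a continuity argument ensuring the iterates stay within a common neighborhood, yields an arithmetic progression in the neighborhood $N_z$ supplied by the hypothesis whose length contradicts \Cref{prop:boundedAP}. The delicate point is handling small-order torsion $g_i$, where the orbit closes up quickly; this is addressed by refining further (the indicator pattern of $L_{i^*}$ for $g_{i^*} = g_i^{-1}$ is forced to match the pattern of $L_i$ at the orbit points, so persistence across iteration eventually falsifies compatibility and terminates the process in boundedly many steps).
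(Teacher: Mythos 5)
Your overall inductive shell (induct on $\dim(S)$, invoke \Cref{prop:boundedBadS} to bound $\Bad(S)$, apply the inductive hypothesis to lower-dimensional pieces) matches the paper's, but your central decomposition is substantially different from the paper's and has a real gap. The paper does not introduce ``local equality'' loci $L_i$ at all. Instead, it fixes a \emph{single} $g\in\Bad(S)$ and produces directly a decomposition $S=S_1'\cup\cdots\cup S_m'$ into $m=O_{\operatorname{comp}(S)}(1)$ definable pieces with $S_i'\cap gS_i'=\emptyset$, where $m$ is the bound coming from \Cref{prop:boundedAP}. The construction is purely combinatorial on $g$-orbits: if $g$ is not torsion of order $\le m$, \Cref{prop:boundedAP} guarantees every forward $g$-orbit starting at a point of $S$ exits $S$ within $m$ steps, so one stratifies $S$ by the smallest $i$ with $g^ix\notin S$; if $g$ \emph{is} torsion of order $\le m$, one stratifies by the rank of $x$ under a fixed definable ordering inside the finite set $\{x,gx,\ldots,g^{m-1}x\}\cap S$. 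In either case $x$ and $gx$ land in different strata, so $g\notin\Bad(S_i')$ for every $i$, and since $\Bad(S_i')\subseteq\Bad(S)$ the number of bad translates drops strictly; iterating for each of the boundedly many bad translates in turn, plus the inductive hypothesis on the lower-dimensional strata, closes the argument.

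The weak point of your proposal is precisely the step you flag as ``the main obstacle.'' Your refinement by the $L_i$-indicator pattern is not shown to make progress: even granting that $\Bad(S_T)\subseteq\Bad(S)$, nothing in your argument produces a $g_i$ that is \emph{removed} from the bad set of the refined piece. Your attempted transfer via three applications of \Cref{prop:XYk} does not yield a common point: each application gives \emph{some} point of local agreement, with no control ensuring the three points can be chosen to coincide or even to lie close to one another, because the lemma only asserts nonemptiness of a locus, not density in any uniform sense. The subsequent orbit argument (iterating the local bijection and invoking the $N_h$ hypothesis) and the ``indicator pattern of $L_{i^*}$ for $g_{i^*}=g_i^{-1}$'' compatibility argument are stated at a level of vagueness where the small-torsion case in particular is not actually resolved --- note that \Cref{prop:boundedAP} explicitly allows small-order torsion $g$ for which arbitrarily long $g$-progressions \emph{do} live in $S$ (they just wrap around), and your orbit/continuity argument has no mechanism to handle that case. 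The paper's two-case construction (non-torsion via exit time, small torsion via rank in a finite orbit under a definable order) is exactly the device that sidesteps these difficulties, and I'd encourage you to adopt it rather than try to patch the $L_i$-based iteration.
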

\begin{proof}
Induct on the dimension of $S$. For $\dim(S)=0$ we have $|S|=O_{\operatorname{comp}(S)}(1)$ by \Cref{thm:unifbound}, so we can decompose $S$ into its individual points. Assume now $\dim(S)>0$ and the result is true for all smaller dimensions. By \Cref{prop:boundedBadS} we have  $|\operatorname{Bad}(S)|=O_{\operatorname{comp}(S)}(1)$. For $g\in \operatorname{Bad}(S)$, we claim we can write
$S=S_1'\cup \ldots \cup S_m'$ with $m=O_{\operatorname{comp}(S)}(1)$ and $\operatorname{comp}(S_i')=O_{\operatorname{comp}(S)}(1)$ for all $i$, such that $S_i'\cap gS_i'=\emptyset$. Given such a decomposition, for each $i$ we either have $S_i'$ is of lower dimension than $S$, in which case we can apply the inductive hypothesis to $S_i'$, or else $g\not\in \operatorname{Bad}(S_i')\subset \operatorname{Bad}(S)$, so $|\Bad(S_i')|<|\Bad(S)|$ and we can apply this same reasoning recursively on each $S_i'$.

To produce such a decomposition, first note that by \Cref{prop:boundedAP}, there exists an $m=O_{\operatorname{comp}(S)}(1)$ such that for all $\operatorname{id}_G\ne g\in G$, either $g$ is torsion of order $\le m$, or else $\{h,gh,\ldots,g^{m-1}h\}\not\subset S$ for any $h$.

We now have two cases. If $g$ is torsion of order $\le m$, then under some definable ordering of $G$, let $S_i'$ be the set of $x\in S$ such that $x$ is $i$'th smallest in the set $\{x,gx,\ldots,g^{m-1}x\}\cap S$. Otherwise $g$ is not torsion of order $\le m$, and we let $S_i'$ be those $x\in S$ such that $\{x,gx,\ldots,g^{i-1}x\}\subset S$ but $g^ix\not\in S$.
\end{proof}

Now we are ready to prove \Cref{thm:ominimaldimension}.
\begin{proof}[Proof of \Cref{thm:ominimaldimension}]
First, we show that $\dimrob(S)\le \dimself(S)$. If $\dimself(S)=\infty$ then there is nothing to prove, so assume that $\dimself(S)=k<\infty$. Then we have to show that $\mathcal{H}_{S,k+1}$ does not contain grids $A_1\times \ldots \times A_{k+1}$ with $|A_1|=\ldots = |A_{k+1}|=C$ unboundedly large. But if $X_i$ is a uniform random variable on $A_{i+1}$, by \Cref{thm:selfirreduciblecomb} we have $\mathbb{P}(\prod_{i=0}^k X_i \in S)=O_S(C^{-\frac{1}{2^k}})$, so for $C$ sufficiently large this probabiliy is less than $1$. Consequently there is at least one element of $A_1\times \ldots \times A_{k+1}$ which does not lie in $\mathcal{H}_{S,k+1}$. Hence $\dim_{rob}(S)\le \dim_{self}(S)$ as desired.

It remains to show that if $S$ does not contain unboundedly large arithmetic progressions, then $S$ is of self-translate dimension $\le \dim(S)$ of complexity $O_{\operatorname{comp}(S)}(1)$. We induct on $\dim(S)$. For $\dim(S)=0$, $S$ is a finite set of points so by \Cref{thm:unifbound} we have $|S|=O_{\operatorname{comp}(S)}(1)$ and we can simply decompose $S$ into its individual points. Assume therefore that $k=\dim(S)>0$ and the result is true for all lower dimensions.

By \Cref{prop:breakup}, there exists $C'=O_{\operatorname{comp}(S)}(1)$ such that $S=S_1\cup \ldots \cup S_{C'}$ with each $S_i$ definable and self-irreducible, and  $\operatorname{comp}(S_i)=O_{\operatorname{comp}(S)}(1)$. For $\operatorname{id}_G\ne g\in G$, we therefore have by \Cref{prop:basiccomplexity} that $\operatorname{comp}(S_i\cap gS_{i})=O_{\operatorname{comp}(S)}(1)$, and $\dim(S_i\cap gS_{i})\le k-1$ by definition of self-irreducibility. Hence we may apply the inductive hypothesis to deduce the existence of a constant $C''=O_{\operatorname{comp}(S)}(1)$ such that $S_i\cap gS_{i}$ is of self-translate dimension $\le k-1$ of complexity $C''$ for all $i,g$. We may then take $C=\max(C',C'')$.
\end{proof}

\section{Proofs of \Cref{thm:trajectory} and \Cref{thm:generalominimal}}

\begin{proof}[Proof of \Cref{thm:generalominimal}]
First we show part a).
If $S$ contains unboundedly large arithmetic progressions, then $\dimrob(S)=\infty$, and \Cref{prop:boundedAP} shows there is an $s\in S$ such that every open neighborhood of $s$ contains unboundedly large arithmetic progressions starting at $s$. Conversely, if $S$ does not contain unboundedly large arithmetic progressions, then by \Cref{thm:ominimaldimension} we have $\dimrob(S)\le \dim(S)<\infty$. The second part concerning definable families follows immediately from \Cref{prop:boundedAP}.

Next we show part b). Let $r=\dimrob(S)$. If for some $r'$ there is a definable map $\Gamma(t_1,\ldots,t_{r'})=\gamma_1(t_1)\ldots\gamma_{r'}(t_{r'}):\prod_{i=1}^{r'} [a_i,b_i] \to S$ with $\gamma_i:[a_i,b_i]\to G$ injective, then $\gamma_1([a_1,b_1])\times \ldots\times \gamma_{r'}([a_{r'},b_{r'}])\subset \mathcal{H}_{S,r'}$, so $r'\le \dimrob(S)$. Conversely, \Cref{cor:Chernikov} shows that there are curves $\mathcal{C}_1,\ldots, \mathcal{C}_r\subset G$ such that for $\mathcal{C}_1\cdots \mathcal{C}_r$ the pointwise product set we $\mathcal{C}_1\cdots \mathcal{C}_r\subset S$.

We claim that $\dim(\mathcal{C}_1\cdots \mathcal{C}_r)=r$. Indeed, note on the one hand that $\mathcal{C}_1\cdots \mathcal{C}_r$ is the image of $\mathcal{C}_1\times \cdots \times \mathcal{C}_r$ under the multiplication map $\Phi:\mathcal{C}_1\times \cdots \times \mathcal{C}_r\to \mathcal{C}_1\cdots \mathcal{C}_r$, so $\dim(\mathcal{C}_1\cdots\mathcal{C}_r)\le r$. On the other hand, $\mathcal{C}_1\times \cdots \times \mathcal{C}_r \subset \mathcal{H}_{\mathcal{C}_1\cdots \mathcal{C}_r,r}$, so $\dimrob(\mathcal{C}_1\cdots \mathcal{C}_r)\ge r$, and as $\mathcal{C}_1\cdots \mathcal{C}_r\subset S$ does not contain unboundedly large arithmetic progressions, by \Cref{thm:ominimaldimension} we have $\dim(\mathcal{C}_1\cdots \mathcal{C}_r) \ge \dimrob(\mathcal{C}_1\cdots \mathcal{C}_r)\ge r$. 

For each of the curves $\mathcal{C}_1,\ldots,\mathcal{C}_r$, there is a coordinate projection to $M$ whose image has dimension $1$, and in particular contains an interval. Hence by \Cref{fact:definablelift} and \Cref{fact:contshrink}, we may assume (after shrinking the intervals and curves) that there are continuous definable bijections $\gamma_i:[a_i,b_i]\to \mathcal{C}_i$. Note that $\Phi\circ(\gamma_1,\ldots,\gamma_r)$ is a surjective map $\prod_{i=1}^r [a_i,b_i]\to \mathcal{C}_1\cdots \mathcal{C}_r$, and both the domain and codomain have dimension $r$. By \Cref{fact:definablelift} applied to the group $G^r$, there is a definable section of $\Phi$, and hence of $\Phi\circ(\gamma_1,\ldots,\gamma_r)$. The image of this latter section has dimension $r$, so by \Cref{prop:XYk} contains an open subset of $\prod_{i=1}^r [a_i,b_i]$. Restricting the intervals further so that their product lies in this open subset, we may assume that $\Phi\circ(\gamma_1,\ldots,\gamma_r)$ is injective. Finally, by \Cref{fact:contshrink} we may restrict the intervals one last time to ensure continuity and injectivity of $\gamma_1,\ldots,\gamma_r$.
\end{proof}

\begin{proof}[Proof of \Cref{thm:trajectory}]
By combining \Cref{prop:balls} and \Cref{prop:boundedAP}, if $S\subset G$ is a definable subset of a definable group in $\mathbb{R}_{an,\exp}$, then $S$ does not contain an exponential arc if and only if $S$ avoids unboundedly large arithmetic progressions. Hence everything follows from \Cref{thm:generalominimal} except the analyticity of $\gamma_1,\ldots,\gamma_n$ and $\Gamma$, which we can guarantee by shrinking the intervals $[a_i,b_i]$ since by van den Dries and Miller \cite[Corollary 6.12, (ii)]{DM94} and \cite[Theorem 8.8, (II)]{DM94}, every $\mathbb{R}_{an,\exp}$-definable function is analytic on some open set.
\end{proof}

\bibliographystyle{plain}
\bibliography{bib}

\end{document}